\newcommand{\bC}{{\mathbb C}}
\newcommand{\Rst}{{\mathbb R}}
\newcommand{\Rdst}{{\Rst^d}}
\newcommand{\Rtdst}{{\Rst^{2d}}}
\newcommand{\set}[2]{\big\{ \, #1 \, \big| \, #2 \, \big\}}
\newcommand{\norm}[1]{\lVert#1\rVert}
\newcommand{\Esp}{{\boldsymbol E}}
\newcommand{\Bsp}{{\boldsymbol B}}
\newcommand{\Hsp}{{\boldsymbol H}}
\newcommand{\Zst}{{\mathbb Z}}
\newcommand{\Zdst}{{\Zst^d}}
\newcommand{\supess}{\mathop{\operatorname{supess}}}
\newcommand{\Lsp}{{\boldsymbol L}}
\newcommand{\Nst}{{\mathbb N}}
\newcommand{\Gc}{{\mathcal{G}}}
\newcommand{\pointdot}{\cdot}
\def\Xsp{{\boldsymbol X}}
\def\Esp{{\boldsymbol E}}
\def\Hsp{{\boldsymbol H}}
\newcommand{\HH}{{\mathbb{H}}}
\def\Hst{{\mathbb H}}
\def\into{\hookrightarrow}
\newcommand{\Espd}{{\boldsymbol E_d}}
\newcommand{\EspdB}{{\boldsymbol E_{d,\Bsp}}}
\newcommand{\ip}[2]{\ensuremath{\left<#1,#2\right>}}
\newcommand{\sett}[1]{\ensuremath{\left \{ #1 \right \}}}
\newcommand{\abs}[1]{\ensuremath{\left| #1 \right| }}
\newcommand{\mes}[1]{\ensuremath{\left| #1 \right| }}
\newcommand{\iv}[1]{{#1}^\vee}
\newcommand{\rel}{\rho}
\newcommand{\coef}{C}
\newcommand{\rec}{S}
\newcommand{\coefp}{C'}
\newcommand{\recp}{S'}
\newcommand{\coefv}{\coef^\Bsp}
\newcommand{\recv}{\rec^\Bsp}
\newcommand{\wweakl}{{W^{\rm weak}(L^\infty,L^1_w)}}
\newcommand{\wweak}{{W_R^{\rm weak}(L^\infty,L^1_w)}}
\newcommand{\wstrong}{{W^{\rm st}(L^\infty,L^1_w)}}
\def\Ssp{{\boldsymbol S}}
\newcommand{\SEsp}{\Ssp}
\newcommand{\stft}{\mathcal{V}}
\newcommand{\consEw}{C_{\Esp,w}}
\newcommand{\multm}{M_m}
\newcommand{\multn}{N_m}
\newcommand{\multmU}{M_{m,U}}
\newcommand{\multoneU}{M_{1,U}}
\newtheorem{lemma}{Lemma}
\newtheorem{theorem}{Theorem}
\newtheorem{prop}{Proposition}
\newtheorem{claim}{Claim}
\newtheorem{rem}{Remark}
\newtheorem{definition}{Definition}
\title{Characterization of coorbit spaces with phase-space covers}
\subjclass[2000]{42B35, 42C15, 42C40}
\keywords{Coorbit theory, localization operator,
phase-space localization, amalgam space, wavelet transform}
\author[J.L.~Romero]{Jos\'e Luis Romero}
\address{Departamento de
Matem\'atica \\ Facultad de Ciencias Exactas y Naturales\\ Universidad
de Buenos Aires\\ Ciudad Universitaria, Pabell\'on I\\ 1428 Capital
Federal\\ ARGENTINA\\ and CONICET, Argentina}
\email[Jos\'e Luis Romero]{jlromero@dm.uba.ar}
\thanks{The author acknowledges
support from the following grants: PICT 2006-00177, CONICET PIP
112-200801-00398 and UBACyT X149}
\begin{document}
\begin{abstract}
We show that coorbit spaces can be characterized in terms of
arbitrary phase-space covers, which are families of phase-space multipliers
associated with partitions of unity. This generalizes previously known
results for time-frequency analysis to include time-scale decompositions. As
a by-product, we extend the existing results for time-frequency analysis to
an irregular setting.
\end{abstract}
\maketitle
\section{Introduction}
Coorbit spaces are functional spaces defined by imposing size
conditions to a certain transform. More precisely, regarding a functional 
space $\Xsp$ as a coorbit space consist of giving a transform
$T: \Xsp \to \Esp$ that embeds $\Xsp$ into another functional space $\Esp$
that is \emph{solid}. This means that the membership in $\Esp$ is
determined by size conditions (for precise definitions see Section
\ref{sec_bf}). The space $\Esp$ consists of functions defined on a set $\Gc$
that is commonly taken to be a locally compact group.

The theory in \cite{fegr89} studies the case when $T$ arises as the
representation coefficients of a unitary action of a locally compact
group. The examples of this theory include a wide range of classical function
spaces. In the case of the affine group acting on $L^2(\Rdst)$
by translations and dilations,
$T$ is the so-called continuous wavelet transform and the corresponding class
of coorbit spaces includes the Lebesgue spaces $L^p$ ($1 < p < \infty$),
Sobolev spaces and,
more generally, the whole class of Besov and Triebel-Lizorkin spaces. In the
case of the Heisenberg group acting on $L^2(\Rdst)$ by time-frequency shifts,
the transform $T$ is known as the short-time Fourier transform (or windowed
Fourier transform) and the corresponding coorbit spaces are known as
modulation spaces \cite{fe89-1, fegr97}.

When a functional space $\Xsp$ is identified as a coorbit space, the
properties of an element $f \in \Xsp$ are reformulated in terms of decay or
integrability conditions
of the function $T(f) \in \Esp$, that is sometimes referred to as the
\emph{phase-space representation} of $f$. The elements of $\Xsp$
can be resynthesized from their phase-space representations by means
of an operator $W: \Esp \to \Xsp$ that is a left-inverse for $T$
(i.e. $f=WT(f)$).
In an attempt to finely adjust the
properties of a function $f$ that are expressed by $T(f)$ one can consider
operators of the form $M_m(f) = W(mT(f))$ that apply a mask $m$ to
the phase-space representation $T(f)$. We will call these operators
\emph{phase-space multipliers}. Of course, the rigorous
interpretation of $M_m(f)$ is problematic since, in general,
$T M_m(f) \not= m T(f)$. When $T$ is the abstract wavelet transform
(representation-coefficients function) associated with an unitary
action of a group, these operators are know as \emph{localization
operators} or \emph{wavelet multipliers}
\cite{hewo96, wong02, limowo08}. In the case of time-frequency
analysis these operators are known as time-frequency localization operators
or multipliers of the short-time Fourier transform \cite{da88, cogr03,
cogr06, bo04-2}.

In this article we characterize the norm of a coorbit space in terms
of families of phase-space multipliers associated with an arbitrary
partition of unity in $\Gc$. Specifically, suppose that $\Xsp$ is a Banach
space that is
regarded as a coorbit space by means of a transform $T:\Xsp \to \Esp$,
having a left-inverse $W:\Esp \to \Xsp$. Let
$\sett{\theta_\gamma}_\gamma$ be a partition of unity on $\Gc$ and consider
the corresponding phase-space multipliers given by $M_\gamma(f) =
W(\theta_\gamma T(f))$. The partition of unity is only assumed to
satisfy certain spatial localization conditions but it is otherwise
arbitrary.
We prove that $\norm{f}_\Xsp$ is equivalent to the norm of the sequence
$\sett{\norm{M_\gamma(f)}_\Bsp}_\gamma$ in a discrete version of the space
$\Esp$,
where the space $\Bsp$ can be chosen among a large class of function spaces.
Moreover, we prove
that the map $f \mapsto \sett{M_\gamma(f)}_\gamma$ embeds $\Xsp$ as a
complemented
subspace of a space of $\Bsp$-valued sequences, obtained as a
discretization of $\Esp$. (See Theorem \ref{th_coverings_do_cover} for a
precise statement). This quantifies the relation between an element
$f \in \Xsp$ and the phase-space localized pieces
$\sett{M_\gamma(f)}_\gamma$.

Phase-space multipliers formalize the notion of acting on a vector
by operating on its phase-space representation. The set of all phase-space
representations $T(\Xsp)$ is usually thought of as the class of all functions
on phase-space, and phase-space itself is implicitly understood as the
underlying set of ``degrees of freedom'' for that class of functions.
Operations on phase-space are formally described as operations on the class
of functions $T(\Xsp)$. Thus, a family of phase-space multipliers
$\sett{M_\gamma}_\gamma$ associated with a partition of unity
$\sett{\theta_\gamma}_\gamma$ on $\Gc$ is a natural formalization of the
notion of a cover of phase-space. From this perspective, the estimates we
prove, that establish a quantitative equivalence between a vector $f$
and the sequence $\sett{M_\gamma(f)}_\gamma$, can be interpreted as saying
that the family of operators $\sett{M_\gamma}_\gamma$ indeed covers
phase-space.

For the case of time-frequency analysis, D\"orfler, Feichtinger and
Gr\"ochenig \cite{dofegr06, dogr09} have recently obtained a
characterization of modulation spaces through families of time-frequency
localization operators, using techniques from rotation algebras
(non-commutative tori) developed in \cite{grle04} and \cite{gr07-2} and
spectral theory for Hilbert spaces.\footnote{For more about the relation
between time-frequency analysis and non-commutative tori see \cite{lu09}.}
In this article we use a different approach to obtain
consequences for settings where the techniques in \cite{dogr09} are not
applicable, such as time-scale decompositions and Besov spaces.
As a by-product we derive a stronger version of the main result in
\cite{dogr09} where the admissible partitions of unity are restricted
to be lattice shifts of a non-negative function and the space $\Bsp$
is $L^2$. (For precise statements see Section \ref{sec_app_gab}).

We now comment on the organization of the article.
We consider an abstract setting in which there is a solid space
$\Esp$ of functions over a group $\Gc$ and a certain complemented subspace
$\SEsp$ (this is similar to the setting studied in \cite{nasu10}).
Phase-space multipliers are defined as operators of the form
$\SEsp \ni f \mapsto P(mf) \in \SEsp$ where
$P:\Esp \to \SEsp$ is a projection and $m \in L^\infty(\Gc)$.
The main result we prove is the characterization of the norm
of $\SEsp$ in terms of the family of multipliers associated with
an arbitrary partition of unity in $\Gc$ (see Theorem
\ref{th_coverings_do_cover}).
The technique of the proof is a vector-valued variant of the
proof of the existence of atomic decompositions for coorbit spaces in
\cite{fegr89}. In Section \ref{sec_applications} we apply the abstract
results to coorbit spaces, by taking $\SEsp$ to be the range of an adequate
transform. The model for phase-space that we consider includes the classical
coorbit theory associated with a group representation but also the case of
coorbit spaces produced from localized frames (see \cite{fogr05}). This
yields further applications to time-frequency analysis, giving a
characterization of modulation spaces in terms of certain discrete
time-frequency localization operators known as Gabor multipliers (see
\cite{feno03}).

Amalgam spaces (see Section \ref{sec_amalgams}) are one of the main
technical tools of this article. We use and slightly extend (see Section
\ref{sec_weak_am}) a number of convolution and sampling
relations from \cite{fegr89} that are particularly important to
discretization of convolution operators.

Section \ref{sec_more_general} establishes a variant of the main result
where, under stronger assumptions on the group $\Gc$, the class
of admissible partitions of unity is enlarged. This partial extension
of the main result is important in a number of examples
and, in particular, allows us to recover and extend the main result from
\cite{dogr09}.
Instead of the tools from rotation algebras used there, we resort to related
results for matrix algebras.

\section{Preliminaries}
\subsection{Notation}
Throughout the article $\Gc$ will be a locally compact, $\sigma$-compact,
topological group with identity element $e$ and modular function $\Delta$.
The left Haar measure of a set $X \subseteq \Gc$ will be denoted by $\mes{X}$
whereas its cardinality will be denoted by $\# X$. Integration will be always
considered with respect to the left Haar measure. The product of two subsets of
$\Gc$, $A,B$, will be denoted $A \cdot B$ or simply $AB$.

For $x \in \Gc$, we denote by $L_x$ and $R_x$ the operators of
left and right translation, defined by
$L_x f(y) = f(x^{-1}y)$ and $R_x f(y) = f(yx)$.
We also consider the involution $\iv{f}(x) = f(x^{-1})$.

Given two non-negative functions $f,g$ we write $f \lesssim g$ if there
exists a constant $C \geq 0$ such that $f \leq C g$. We say that $f \approx
g$ if both $f \lesssim g$ and $g \lesssim f$.
The characteristic function of the set $A$ will be denoted by $\chi_A$.
The symbol $\ip{\cdot}{\cdot}$ will stand for the $L^2$ inner product,
$\ip{f}{g}:=\int_\Gc f(x) \overline{g}(x) dx$, whenever defined.

A set $\Lambda \subseteq \Gc$ is called \emph{relatively separated} if 
for some (or any) $V \subseteq \Gc$, relatively compact neighborhood of $e$,
the quantity - called the \emph{spreadness of $\Lambda$} -
\begin{align}
\label{eq_spread}
\rel(\Lambda) = \rel_V(\Lambda)
:= \sup_{x \in \Gc} \# (\Lambda \cap x V)
\end{align}
is finite, i.e. if the amount of elements of $\Lambda$ that lie in any left
translate of $V$ is uniformly bounded. Equivalently, $\Lambda$ is relatively
separated if for any compact set $K \subseteq \Gc$,
\begin{align*}
\sup_{\lambda \in \Lambda}
\#\set{\lambda' \in \Lambda} {\lambda K \cap \lambda' K \neq \emptyset}
<+\infty.
\end{align*}
A set $\Lambda \subseteq \Gc$ is called $V$-\emph{dense} (for $V$, a
relatively compact neighborhood of $e$) if
$\Gc = \bigcup_{\lambda \in \Lambda} \lambda V$.
$\Lambda$ is called \emph{well-spread} if it is both relatively separated and
$V$-dense for some $V$.

We now fix $V$, a symmetric (i.e. $V=V^{-1}$) relatively compact neighborhood
of the identity in $\Gc$. Some definitions below depend on the choice of
$V$, but different choices of $V$ will yield equivalent objects.

We will sometimes assume that $\Gc$ is an IN group, i.e., that it has a
relatively compact neighborhood of the identity that is invariant under
inner automorphisms. By convention, whenever we assume that $\Gc$ is an IN
group we will further assume that the distinguished neighborhood $V$ is
invariant (i.e. $xVx^{-1}=V$, for all $x \in \Gc$).

\subsection{Function spaces}
\label{sec_bf}
A \emph{BF space} is a Banach space $\Esp$ consisting of functions on
$\Gc$ that is continuously embedded into $L_{\mathrm{loc}}^1(\Gc)$,
the space of locally integrable functions.

A BF space $\Esp$ is called \emph{solid} if for every $f \in \Esp$
and every measurable function $g: \Gc \to \bC$ such that
$\abs{g(x)} \leq \abs{f(x)}$ a.e., it is true that $g \in \Esp$
and $\norm{g}_\Esp \leq \norm{f}_\Esp$.

An \emph{admissible weight} is a locally bounded function
$w: \Gc \to (0,+\infty)$ that satisfies the following conditions,
\begin{align}
\label{weight_w_delta}
&w(x) = \Delta(x^{-1})w(x^{-1}),
\\
\label{weight_w_submult}
&w(xy) \leq w(x)w(y)
\mbox{ (submultiplicativity)}.
\end{align}
If $\Esp$ is a solid BF space and $w$ is an admissible weight,
we let $\Esp_w$ be the set of all functions $f \in L_{\mathrm{loc}}^1(\Gc)$
such that $fw \in \Esp$ and endow it with the norm
$\norm{f}_{\Esp_w} := \norm{fw}_\Esp$.
If $w$ is an admissible weight, then
$L^1_w$ is a convolution algebra,
$\norm{f}_{L^1_w}=\norm{\iv{f}}_{L^1_w}$, and
$\norm{L_x}_{L^1_w \to L^1_w} \leq w(x)$.

We say that a solid BF space $\Esp$ is \emph{translation invariant} if
it satisfies the following.
\begin{itemize}
\item[(i)] $\Esp$ is closed under left and right translations
(i.e. $L_x \Esp \subseteq \Esp$ and $R_x \Esp \subseteq \Esp$, for all $x \in
\Gc$).
\item[(ii)] The relations,
\begin{align}
\label{conv_mod_assumption}
L^1_u(\Gc)*\Esp \subseteq \Esp
\mbox{  and  }
\Esp*L^1_v \subseteq \Esp,
\end{align}
hold, with the corresponding norm estimates,
where $u(x) := \norm{L_x}_{\Esp \to \Esp}$
and $v(x) := \Delta(x^{-1})\norm{R_{x^{-1}}}_{\Esp \to \Esp}$.
\end{itemize}
\begin{rem}
Observe that, for a BF space, if the translations leave $\Esp$ invariant,
then they are bounded by the closed graph theorem.
\end{rem}
\begin{rem}
In the definition of translation invariant space, the
technical assumption (ii) follows from (i)
if the set of continuous functions with compact
support is dense on $\Esp$, or more generally if
the maps $x \mapsto L_x$ and $x \mapsto R_x$
are strongly continuous.
\end{rem}
We say that $\Esp$ is isometrically left (right) translation
invariant if it is translation invariant and, in addition,
left (right) translations are isometries on $\Esp$.
The weighted Lebesgue spaces $L^p_m(\Rdst)$ with
$m(x) := (1 + \abs{x})^{\alpha}$, $\alpha \in \Rst$, are examples
of translation invariant solid BF spaces on $\Rdst$. These are isometrically
translation invariant if $m \equiv 1$.

Given a BF space $\Esp$, a set of functions
$\set{f_\lambda}{\lambda \in \Lambda} \subseteq L_{\mathrm{loc}}^1(\Gc)$
- indexed by a relatively separated set $\Lambda$ - is called a set of
$\Esp$-\emph{molecules}
if there exists a function $g \in \Esp$ - called envelope - such that
\begin{align*}
 \abs{f_\lambda(x)} \leq L_\lambda g (x),
\qquad (x \in \Gc, \lambda \in \Lambda).
\end{align*}

Given a solid, translation invariant, BF space $\Esp$, we
say that a weight $w: \Gc \to (0,+\infty)$ is
\emph{admissible for $\Esp$} if $w$ is admissible and,
in addition, it satisfies,
\begin{align}
\label{weight_w_admissible}
w(x) \geq \consEw \max \sett{u(x), \; u(x^{-1}), \; v(x),
\; \Delta(x^{-1})v(x^{-1})},
\end{align}
where $u(x) := \norm{L_x}_{\Esp \to \Esp}$,
$v(x) := \Delta(x^{-1})\norm{R_{x^{-1}}}_{\Esp \to \Esp}$
and $\consEw > 0$ is a constant.
Under these conditions, $w(x) \gtrsim 1$,
$L^1_w * \Esp \subseteq \Esp$
and $\Esp*L^1_w \subseteq \Esp$, with the corresponding norm estimates.
Moreover, the constants in those estimates depend only on $\consEw$.

If $\Esp$ is a solid BF space, we construct discrete versions
of it as follows. Given a well-spread set $\Lambda \subseteq \Gc$
we define the space,
\begin{align*}
\Espd = \Espd(\Lambda) := \set{c \in {\bC}^\Lambda}
{\sum_\lambda \abs{c_\lambda} \chi_{\lambda V} \in \Esp},
\end{align*}
and endow it with the norm,
\begin{align*}
 \norm{\left(c_\lambda \right)_{\lambda \in \lambda}}_{\Espd}
 := \norm{\sum_\lambda \abs{c_\lambda} \chi_{\lambda V}}_\Esp.
\end{align*}
The definition, of course, depends on $\Lambda$ and $V$,
but a large class of neighborhoods $V$ and sets $\Lambda$
produce equivalent spaces (see \cite[Lemma 3.5]{fegr89} for a precise
statement). In the sequel, we will mainly use the space $\Espd$ keeping
$V$ fixed and making an explicit choice for $\Lambda$.
When $\Esp=L^p_w$, for an admissible weight $w$, the corresponding discrete
space $\Espd(\Lambda)$ is $\ell^p_w(\Lambda)$, where the weight $w$ is
restricted to the set $\Lambda$. This is so because the admissibility of $w$
implies that for $x \in \lambda V$, $w(x) \approx w(\lambda)$.

We will also need a vector-valued version of $\Espd$. Given a second Banach
space $\Bsp$ we let,
\begin{align*}
\EspdB = \EspdB(\Lambda) 
&:= \set{c \in {\Bsp}^\Lambda}
{ (\norm{c_\lambda}_\Bsp)_{\lambda \in \Lambda} \in \Espd(\Lambda)}
\\
&= \set{c \in {\Bsp}^\Lambda}
{\sum_\lambda \norm{c_\lambda}_\Bsp \chi_{\lambda V} \in \Esp},
\end{align*}
and endow it with a norm in a similar fashion.

\subsection{Wiener amalgam spaces}
\label{sec_amalgams}\cite{fe83, ho75, foust85}.
Given two solid, translation invariant BF spaces $\Bsp$ and $\Esp$,
the left \emph{amalgam space} (or space of Wiener-type) with local component
$\Bsp$ and global component $\Esp$ is defined by,
\begin{align*}
W(\Bsp, \Esp) := \set{f \in \Bsp_{\textit{loc}}}{K_\Bsp(f) \in \Esp},
\end{align*}
where the control function $K_\Bsp(f)$ is given by
$K_\Bsp(f)(x) := \norm{f (L_x \chi_V)}_\Bsp=\norm{f \chi_{xV}}_\Bsp$.
We endow $W(\Bsp, \Esp)$ with the norm
$\norm{f}_{W(\Bsp, \Esp)} := \norm{K_\Bsp(f)}_\Esp$.
The right amalgam space $W_R(\Bsp, \Esp)$ is defined similarly,
this time using the control function
$K_\Bsp(f,R)(x):= \norm{f (R_x \chi_V)}_\Bsp=\norm{f \chi_{Vx^{-1}}}_\Bsp$.
This definition depends on the choice of the neighborhood $V$, but a
different choice produces the same space with an equivalent norm.

When $\Bsp$ and $\Esp$ are weighted $L^p$ spaces, the corresponding
amalgam space coincides with the classical $L^p-\ell^q$ amalgam spaces
\cite{ho75, foust85}. We are requiring that the space $\Bsp$ be solid,
but much of the theory only requires that $\Bsp$ have a sufficiently rich
algebra of pointwise multipliers (see \cite{fe83}).

Amalgam spaces with $L^1$ and $L^\infty$ as local components
will be a key technical tool in this article. The spaces
$W(L^\infty,\Esp)$ and $W_R(L^\infty,\Esp)$ can
be easily described in terms of certain maximum functions.
For a locally bounded function $f: \Gc \to \bC$ we define the
left and right \emph{local maximum} functions by,
\begin{align*}
f^\# (x) &:= \supess_{y \in V} \abs{f(xy)},
\\
f_\# (x) &:= \supess_{y \in V} \abs{f(yx)}.
\end{align*}
Since $V$ is symmetric,
these functions are related by $\iv{(f_\#)}=(\iv{f})^\#$.
Using these definitions we have,
\begin{align}
\label{eq_left_am}
\norm{f}_{W(L^\infty, \Esp)} &= \norm{f^\#}_{\Esp},
\\
\label{eq_right_am}
\norm{f}_{W_R(L^\infty, \Esp)} &= \norm{\iv{(f_\#)}}_{\Esp}.
\end{align}
In particular,
$\norm{f}_{W_R(L^\infty, \Esp)} =
\norm{\iv{f}}_{W(L^\infty, \Esp)}$. Note also that,
by the solidity of $\Esp$, both $W(L^\infty, \Esp)$ and $W_R(L^\infty, \Esp)$
are continuously embedded into $\Esp$. We will denote by $W(C_0,\Esp)$
the subspace of $W(L^\infty, \Esp)$ formed by the continuous functions.
$W_R(C_0,\Esp)$ is defined similarly.

When $\Esp=L^1_w$ for an admissible weight $w$
we can drop the involution in Equation \eqref{eq_right_am} yielding, 
\begin{align}
\label{deff_WR}
\norm{f}_{W_R(L^\infty, L^1_w)}
:= \norm{f_\#}_{L^1_w}.
\end{align}
In addition, since $L_x (f^\#)= (L_x f)^\#$, the space
$W(L^\infty,L^1_w)$ is invariant under left translations
and the norm of
the left translations is dominated by $w$. A similar statement
holds for $W_R(L^\infty,L^1_w)$ and right translations.

We finally note that if $\Gc$ is an IN group, the left and
right local maximum functions coincide and therefore
$W(L^\infty, \Esp)=W_R(L^\infty, \Esp)$.

We now state some facts about amalgam spaces
that are relevant to atomic decompositions.
They have been mainly collected from \cite{fegr89}.
In the cases when we were unable to find an exact reference we sketch a
proof. Most of the results below concern a translation invariant BF space
$\Esp$ and an admissible weight $w$. We point out that the corresponding
estimates depend only on $w$ and certain qualities of $\Esp$, 
namely the value of the constant $\consEw$ in Equation
\eqref{weight_w_admissible}.
\begin{lemma}
\label{lemma_amfacts1}
Let $\Esp$ be a solid, translation invariant BF space
and let $w$ be an admissible weight for it.
The following embeddings hold, together with the corresponding
norm estimates.\footnote{Here, the symbol $\cdot$ denotes the pointwise
product.}
\begin{itemize}
\item[(a)] $\Esp *W(L^\infty,L^1_w)
\hookrightarrow W(L^\infty,\Esp)$
and $\Esp *W(C_0,L^1_w)
\hookrightarrow W(C_0,\Esp)$.
\item[(b)] $\Esp \hookrightarrow W(L^1,L^\infty_{1/w})$.
In addition, if $\Esp$ is isometrically left-translation invariant,
then $\Esp \hookrightarrow W(L^1,L^\infty)$.
\item[(c)] $W(L^1,L^\infty) \pointdot W(L^\infty,L^1) \hookrightarrow L^1$
and $W(L^1,L^\infty_{1/w}) \pointdot W(L^\infty,L^1_w) \hookrightarrow L^1$.
\item[(d)] $W(L^1,L^\infty)*W_R(L^\infty,L^1)
\hookrightarrow L^\infty$ and
$W(L^1,L^\infty_{1/w})*W_R(L^\infty,L^1_w)
\hookrightarrow L^\infty_{1/w}$.
\end{itemize}
\end{lemma}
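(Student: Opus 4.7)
The plan is to reduce each of (a)--(d) to a pointwise or discrete estimate that can be combined with the admissibility relations on $w$ and the convolution structure of $L^1_w$ acting on $\Esp$. All four statements share a common mechanism: covering $\Gc$ by left translates of $V$ and exchanging local norms with global norms through the control functions $K_\Bsp$.

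For (a), I would start from the pointwise estimate $(f*g)^{\#}(x) \leq (|f| * g^{\#})(x)$, obtained by writing $(f*g)(xy) = \int f(z) g(z^{-1}xy)\,dz$ for $y \in V$ and bounding $|g(z^{-1}xy)| \leq g^{\#}(z^{-1}x)$. Taking $\Esp$-norms and invoking the inclusion $\Esp * L^1_w \hookrightarrow \Esp$ (which, by admissibility, holds with constants depending only on $\consEw$) gives the desired estimate, since $\norm{g^{\#}}_{L^1_w} = \norm{g}_{W(L^\infty, L^1_w)}$; the $C_0$ variant is automatic from continuity. For (b), the left invariance of the Haar measure gives $K_{L^1}(f)(x) = \norm{(L_{x^{-1}}f)\chi_V}_{L^1}$. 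Since $\Esp \hookrightarrow L^1_{\mathrm{loc}}$, $\norm{h \chi_V}_{L^1} \lesssim \norm{h}_\Esp$; plugging in $h = L_{x^{-1}}f$ and using $u(x^{-1}) \leq w(x)/\consEw$ yields $K_{L^1}(f)(x) \lesssim w(x) \norm{f}_{\Esp}$, which is exactly the $W(L^1,L^\infty_{1/w})$ estimate. When $\Esp$ is isometrically left-translation invariant, $u \equiv 1$ and the weight drops out.

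For (c) and (d), I would fix a well-spread set $\Lambda \subseteq \Gc$ (available by $\sigma$-compactness) so that $\{\lambda V\}_{\lambda \in \Lambda}$ is a bounded-overlap cover of $\Gc$, together with a measurable partition $\{E_\lambda\}$ with $E_\lambda \subseteq \lambda V$. For (c),
\begin{align*}
\norm{fg}_{L^1} \;=\; \sum_\lambda \int_{E_\lambda} |fg| \;\leq\; \sum_\lambda \norm{f \chi_{\lambda V}}_{L^1}\, \norm{g \chi_{\lambda V}}_{L^\infty};
\end{align*}
one bounds $\sup_\lambda \norm{f\chi_{\lambda V}}_{L^1} \leq \norm{f}_{W(L^1,L^\infty)}$ and controls $\sum_\lambda \norm{g \chi_{\lambda V}}_{L^\infty}$ by a discrete-to-integral comparison against $\norm{g^{\#}}_{L^1} \approx \norm{g}_{W(L^\infty, L^1)}$. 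For (d), write $(f*g)(x) = \int f(y) g(y^{-1}x)\, dy$, split the integration over $y \in \lambda V$, and note that $y^{-1}x \in V(x^{-1}\lambda)^{-1}$, so $|g(y^{-1}x)| \leq g_{\#}(x^{-1}\lambda)$, giving
\begin{align*}
|(f*g)(x)| \;\leq\; \sum_\lambda \norm{f \chi_{\lambda V}}_{L^1}\, g_{\#}(x^{-1}\lambda),
\end{align*}
which is bounded by $\norm{f}_{W(L^1, L^\infty)}$ times a discrete-to-integral estimate on $\sum_\lambda g_{\#}(x^{-1}\lambda)$, uniformly in $x$. The weighted versions of (c) and (d) follow the same pattern, factoring $w(\lambda) \leq w(x)\, w(x^{-1}\lambda)$ via submultiplicativity and using that $w$ is quasi-constant on each $\lambda V$.

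The main technical obstacle is the discrete-to-integral comparison underlying (c) and (d): for any well-spread $\Lambda$ and any locally integrable non-negative $\phi$, one needs $\sum_\lambda \phi(\lambda) \lesssim \int \phi^{\#}(x)\, dx$ (with appropriate weighted analogues) uniformly in any external translation parameter. This rests on the finite overlap of $\{\lambda V\}$, the quasi-constancy of $w$ on translates of $V$ (from submultiplicativity and local boundedness of $w$), and the involution invariance of $L^1_w$ forced by $w(x) = \Delta(x^{-1}) w(x^{-1})$. Once these standard tools are in place, each estimate reduces to the pointwise/discrete bounds above, with constants depending only on $\consEw$ and on the geometry of $V$.
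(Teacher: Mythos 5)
Your proposal is correct, but for parts (c) and (d) it follows a genuinely different route from the paper. For (a) and (b) the paper simply cites \cite[Theorem 7.1]{fegr89-1} and \cite[Lemma 3.9]{fegr89}; your direct arguments --- the pointwise bound $(f*g)^{\#} \leq |f| * g^{\#}$ combined with $\Esp * L^1_w \hookrightarrow \Esp$, and the identity $K_{L^1}(f)(x) = \norm{(L_{x^{-1}}f)\chi_V}_{L^1}$ combined with $u(x^{-1}) \lesssim w(x)$ --- are exactly the standard proofs behind those citations, so no discrepancy there. For (c), the paper avoids discretization entirely: it integrates the identity $\int_\Gc\int_\Gc |f(x)| (L_y\chi_V(x))\,dx\,dy = \mes{V}\,\norm{f}_{L^1}$ (valid since $V=V^{-1}$) to get $\norm{fg}_{L^1} \approx \int_\Gc \norm{f}_{L^1(yV)}\norm{g}_{L^\infty(yV)}\,dy$, a continuous averaging over the translates $yV$ that requires no well-spread set, no bounded-overlap count, and no sampling lemma. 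For (d) the paper does not redo any covering argument: it writes $|f*g(x)| \leq \int_\Gc |f(y)|\,|L_x\iv{g}(y)|\,dy$, applies (c), and uses $\norm{L_x\iv{g}}_{W(L^\infty,L^1_w)} \leq w(x)\norm{\iv{g}}_{W(L^\infty,L^1_w)}$ together with $\norm{\iv{g}}_{W(L^\infty,L^1_w)} = \norm{g}_{W_R(L^\infty,L^1_w)}$ --- a three-line reduction of (d) to (c) that sidesteps your uniform-in-$x$ estimate on $\sum_\lambda g_\#(\lambda^{-1}x)$. Your discrete route does go through: left translates preserve spreadness, so the sampling estimate applies to the set $x^{-1}\Lambda$ uniformly in $x$, and the relation $w(z^{-1})\Delta(z^{-1}) = w(z)$, which you correctly single out, handles the involuted weight in the $W_R$ norm. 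But it costs you extra standard machinery that the paper's argument makes unnecessary: the essential-supremum versus pointwise-value care in the discrete-to-integral comparison, the equivalence of amalgam norms under the change of window from $V$ to $VV$, and the existence of well-spread sets. Two small slips to fix: in (d) the bound should read $|g(y^{-1}x)| \leq g_\#(\lambda^{-1}x)$, not $g_\#(x^{-1}\lambda)$ (for $y = \lambda v$ with $v \in V$ one has $y^{-1}x = v^{-1}(\lambda^{-1}x) \in V(\lambda^{-1}x)$, matching the definition $g_\#(z) = \supess_{v \in V}|g(vz)|$); and in (a) the continuity of $f*g$ for $g \in W(C_0,L^1_w)$ is not quite ``automatic'' --- it needs, say, norm-continuity of translation on $W(C_0,L^1_w)$ or density of compactly supported continuous functions, a point worth one sentence.
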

\begin{proof}
Part (a) is proved in \cite[Theorem 7.1]{fegr89-1}.
By \cite[Lemma 3.9]{fegr89},
$\Esp \hookrightarrow W(L^1,L^\infty_{(\iv{u})^{-1}})$,
where $u(x) := \norm{L_x}_{\Esp \to \Esp}$. The admissibility
of $w$ implies that $\iv{u} \lesssim w$, so part (b) follows.

To prove (c) first observe that for any $f \in L^1(\Gc)$, since $V=V^{-1}$,
\begin{align*}
\int_\Gc \int_\Gc \abs{f(x)} (L_y \chi_V(x)) dx dy
&= \int_\Gc \abs{f(x)} \int_\Gc \chi_V(y^{-1}x) dy dx
= \mes{V} \int_\Gc \abs{f(x)} dx.
\end{align*}
Using this observation, for $f \in W(L^1,L^\infty_{1/w})$
and $g \in W(L^\infty,L^1_w)$,
\begin{align*}
&\int_\Gc \abs{f(x)} \abs{g(x)} dx
\approx
\int_\Gc \int_{yV} \abs{f(x)} \abs{g(x)} dx dy
\\
&\quad \lesssim
\int_\Gc \norm{f}_{L^1(yV)} \norm{g}_{L^\infty(yV)} dy
\leq \norm{f}_{W(L^1,L^\infty_{1/w})} \norm{g}_{W(L^\infty,L^1_w)}.
\end{align*}
The unweighted case follows similarly.
To prove (d) let $f \in W(L^1,L^\infty_{1/w})$ and
$g \in W_R(L^\infty,L^1_w)$.
For $x \in \Gc$ we can use (c) to get,
\begin{align*}
\abs{f*g(x)} &\leq \int_\Gc \abs{f(y)}\abs{L_x \iv{g}(y)} dy
\leq
\norm{f}_{W(L^1,L^\infty_{1/w})} \norm{L_x\iv{g}}_{W(L^\infty,L^1_w)}
\\
&\leq \norm{f}_{W(L^1,L^\infty_{1/w})} 
\norm{\iv{g}}_{W(L^\infty,L^1_w)} w(x).
\end{align*}
Since $\norm{\iv{g}}_{W(L^\infty,L^1_w)}=\norm{g}_{W_R(L^\infty,L^1_w)}$
the weighted inequality in (d) is proved. The unweighted one
follows similarly, this time using the unweighted bound in (c).
\end{proof}

\begin{lemma}
\label{lemma_amfacts2}
Let $\Esp$ be a solid, translation invariant BF space, 
let $w$ be an admissible weight for it and let
$\Lambda \subseteq \Gc$ be a relatively separated set.
Then the following holds.
\begin{itemize}
\item[(a)]  For every $f \in W(C_0, \Esp)$,
the sequence $f(\Lambda) = (f(\lambda))_{\lambda \in \Lambda}$ belongs
to $\Espd(\Lambda)$ and,
\[\norm{f(\Lambda)}_{\Espd} \lesssim \norm{f}_{W(C_0, \Esp)}.\]
\item[(b)] For every $f \in \Esp$ and $g \in W_R(C_0,L^1_w)$,
the sequence $(\ip{f}{L_\lambda g})_{\lambda \in \Lambda}$
belongs to $\Espd(\Lambda)$ and,
\[\norm{(\ip{f}{L_\lambda g})_\lambda}_{\Espd}
\lesssim \norm{f}_\Esp \norm{g}_{W_R(L^\infty,L^1_w)}.\]
\item[(c)]
If $(c_\lambda)_\lambda \in \Espd(\Lambda)$
and $f \in W_R(L^\infty,L^1_w)$,
then $\sum_\lambda c_\lambda L_\lambda f \in \Esp$ and
\[\norm{\sum_\lambda c_\lambda L_\lambda f}_\Esp
\lesssim \norm{(c_\lambda)_\lambda}_{\Espd} \norm{f}_{W_R(L^\infty,
L^1_w)}.\]
The series converges absolutely at every point and,
if the set of bounded compactly supported functions is dense in $\Esp$,
it also converges unconditionally in the norm of $\Esp$.
\item[(d)] $\Espd(\Lambda) \hookrightarrow \ell^\infty_{1/w}(\Lambda)$.
\end{itemize}
All the implicit constants depend on $\rel(\Lambda)$ - the spreadness of
$\Lambda$ (cf. Equation \eqref{eq_spread}).
\end{lemma}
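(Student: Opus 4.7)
The plan is to reduce each of the four parts to a pointwise domination estimate followed by an appeal to solidity of $\Esp$ or to the amalgam/convolution inclusions from Lemma \ref{lemma_amfacts1}. Two geometric ingredients are used throughout: (i) the bounded-overlap identity $\sum_{\lambda \in \Lambda} \chi_{\lambda V}(x) \leq \rel(\Lambda)$ for every $x$, which is a restatement of the spreadness bound (using $V = V^{-1}$ to convert $\lambda \in x V^{-1}$ into $\lambda \in xV$); and (ii) the local maximum inequalities $|f(\lambda)| \leq f^\#(x)$ whenever $x \in \lambda V$, and $|f(y)| \leq f_\#(y)$.

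For (a), multiplying (ii) by $\chi_{\lambda V}(x)$ and summing over $\lambda$ gives $\sum_\lambda |f(\lambda)| \chi_{\lambda V}(x) \leq \rel(\Lambda)\,f^\#(x)$, and the bound follows from $\norm{f^\#}_\Esp = \norm{f}_{W(L^\infty, \Esp)}$ together with solidity. For (d), fix $\mu \in \Lambda$; solidity applied to $|c_\mu| \chi_{\mu V} \leq \sum_\lambda |c_\lambda| \chi_{\lambda V}$ yields $|c_\mu| \, \norm{\chi_{\mu V}}_\Esp \leq \norm{c}_{\Espd}$, while writing $\chi_V = L_{\mu^{-1}}(L_\mu \chi_V)$ gives $\norm{\chi_{\mu V}}_\Esp \geq \norm{\chi_V}_\Esp / u(\mu^{-1})$; the admissibility bound $u(\mu^{-1}) \lesssim w(\mu)$ then produces $|c_\mu| \lesssim w(\mu) \norm{c}_{\Espd}$.

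For (b), I rewrite $\ip{f}{L_\lambda g} = (f * h)(\lambda)$ with $h(x) := \overline{g(x^{-1})}$, note that $\norm{h}_{W(C_0, L^1_w)} = \norm{g}_{W_R(C_0, L^1_w)} \leq \norm{g}_{W_R(L^\infty, L^1_w)}$, and apply Lemma \ref{lemma_amfacts1}(a) to conclude $f * h \in W(C_0, \Esp)$; part (a) applied to $f*h$ finishes the job. For (c), the key pointwise domination is
\begin{align*}
|V| \sum_\lambda |c_\lambda| \, L_\lambda |f|(x) \leq (\phi * f_\#)(x), \qquad \phi := \sum_\lambda |c_\lambda| \chi_{\lambda V},
\end{align*}
obtained by writing $y \in \lambda V$ as $y = \lambda v$ with $v \in V = V^{-1}$, so that $|f(\lambda^{-1} x)| = |f(v y^{-1} x)| \leq f_\#(y^{-1} x)$, and then integrating this bound against $|c_\lambda| \chi_{\lambda V}(y)$. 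Since $\phi \in \Esp$ with norm $\norm{c}_{\Espd}$, $\norm{f_\#}_{L^1_w} = \norm{f}_{W_R(L^\infty, L^1_w)}$, and admissibility yields $\Esp * L^1_w \hookrightarrow \Esp$, the norm estimate and absolute pointwise (a.e.) convergence follow at once; unconditional convergence in $\Esp$ under the density hypothesis comes from truncating the sum and applying dominated convergence to the majorant. The main technical step is this last domination: one has to pass from an unwieldy sum of shifted copies of $f$ to a single convolution, and the averaging over $\lambda V$ together with the symmetry of $V$ is what makes this work cleanly.
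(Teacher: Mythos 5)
Your proof is correct and takes essentially the same route as the paper: your part (b) is verbatim the paper's argument (write $\ip{f}{L_\lambda g}=(f*\iv{g})(\lambda)$, apply Lemma \ref{lemma_amfacts1}(a) and then part (a)), while for (a), (c) and (d) --- which the paper simply delegates to \cite{fegr89} (Lemma 3.8, Proposition 5.2, Lemma 3.5) --- you reconstruct the standard arguments of that reference, including the key majorization $\sum_\lambda \abs{c_\lambda}\, L_\lambda\abs{f} \leq \abs{V}^{-1}\bigl(\phi * f_\#\bigr)$ with $\phi=\sum_\lambda \abs{c_\lambda}\chi_{\lambda V}$ behind (c). One minor caveat, which you flag yourself: your ess-sup manipulations in (c) are valid only up to null sets, so you get absolute convergence a.e.\ rather than literally ``at every point'' as stated (the everywhere statement uses a locally bounded representative of $f$ together with $\abs{c_\lambda}\lesssim w(\lambda)\norm{c}_{\Espd}$ from (d)); this is the same looseness the paper itself permits in analogous steps (cf.\ the proof of Proposition \ref{prop_weak_conv}) and is harmless for all uses of the lemma.
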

\begin{proof}
Part (a) follows easily from the definitions (see for
example \cite[Lemma  3.8]{fegr89}).
For (b) observe that
$\ip{f}{L_\lambda g} = (f * \iv{g})(\lambda)$. Hence,
Lemma \ref{lemma_amfacts1} and part (a) imply that
\begin {align*}
\norm{(\ip{f}{L_\lambda g})_\lambda}_{E^d}
\lesssim \norm{f * \iv{g}}_{W(C_0,\Esp)}
\lesssim \norm{f}_\Esp \norm{\iv{g}}_{W(L^\infty,L^1_w)}
= \norm{f}_\Esp \norm{g}_{W_R(L^\infty,L^1_w)}.
\end {align*}

Part (c) is Proposition 5.2 of \cite{fegr89}. Lemma 3.5
in \cite{fegr89} gives the embedding
$\Espd(\Lambda) \into \ell^\infty_{1/u}(\Lambda)$,
where $u(x) := \norm{L_x}_{\Esp \to \Esp}$. Since
$u \lesssim w$, part (d) follows.
\end{proof}
Finally we state the following lemma that will be used
to justify treating convolutions pointwise.
\begin{lemma}
\label{lemma_amfacts25}
Let $\Esp$ be a solid, translation invariant BF space
and let $w$ be an admissible weight for it.
The following embeddings hold, together with the corresponding
norm estimates.
\begin{itemize}
\item[(a)] $W(L^\infty, \Esp) \into L^\infty_{1/w}$.
\item[(b)] $W(L^\infty, \Esp) * L^1_w \into C_{1/w}$,
where $C_{1/w}$ denotes the subspace of $L^\infty_{1/w}$
formed by the continuous functions.
\end{itemize}
\end{lemma}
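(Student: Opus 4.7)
For part (a), my plan is to reduce the weighted $L^\infty$ bound on $f$ to information about $f^\#$, using the already-available embedding of $\Esp$ into a weighted amalgam space. The first step is the pointwise (a.e.) estimate
\[
\abs{f(y)} \leq \mes{V}^{-1} \int_{yV} f^\#(x')\, dx',
\]
which follows from the defining relation $\abs{f(x'v)} \leq f^\#(x')$ for a.e.\ $v \in V$: a Fubini argument after the change of variable $y = x'v$ shows that for a.e.\ $y$ and a.e.\ $x' \in yV$ one has $\abs{f(y)} \leq f^\#(x')$, so $\abs{f(y)}$ is bounded by the mean of $f^\#$ over $yV$. The second step invokes Lemma \ref{lemma_amfacts1}(b), which gives $\Esp \into W(L^1, L^\infty_{1/w})$. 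Since the control function $K_{L^1}(f^\#)(y)$ is exactly $\int_{yV} f^\#(x')\, dx'$, its membership in $L^\infty_{1/w}$ means $\int_{yV} f^\#(x')\, dx' \lesssim w(y)\, \norm{f^\#}_\Esp = w(y)\, \norm{f}_{W(L^\infty, \Esp)}$. Combining the two bounds yields $\norm{f}_{L^\infty_{1/w}} \lesssim \norm{f}_{W(L^\infty, \Esp)}$.

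For part (b), I will first derive the weighted Young-type inequality $\norm{f*g}_{L^\infty_{1/w}} \leq \norm{f}_{L^\infty_{1/w}} \norm{g}_{L^1_w}$ for $f \in L^\infty_{1/w}$, $g \in L^1_w$, which together with (a) gives the boundedness part of the embedding. To verify this, I will use left Haar invariance to rewrite $(f*g)(x) = \int f(xt) g(t^{-1})\, dt$, apply submultiplicativity $w(xt) \leq w(x) w(t)$ to pull out $w(x)$, and then exploit the identity $w(t^{-1}) = \Delta(t) w(t)$ (a consequence of \eqref{weight_w_delta}) together with the standard change of variable $\int h(t^{-1}) \Delta(t^{-1})\, dt = \int h(t)\, dt$ to collapse the remaining integral to $\norm{g}_{L^1_w}$. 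To obtain continuity, I will approximate $g$ in $L^1_w$ by $g_n \in C_c(\Gc)$ (dense since $w$ is locally bounded); each $f*g_n$ is continuous by dominated convergence (using that $f$ is locally bounded, as $w$ is), and the norm bound just derived gives $f*g_n \to f*g$ in $L^\infty_{1/w}$, hence uniformly on compact sets, so $f*g$ is continuous.

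The main technical subtlety I anticipate is the Fubini step in part (a) that converts the essential-supremum definition of $f^\#$ into a usable pointwise-a.e.\ inequality for $\abs{f}$. The remainder of the argument consists of a direct weighted Young-type calculation and a routine density approximation, neither of which should present a genuine difficulty.
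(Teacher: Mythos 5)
Your proof is correct; part (b) matches the paper's argument in substance, while part (a) takes a genuinely different route. For (a), the paper discretizes: it invokes Lemma \ref{lemma_amfacts2}(d), $\Espd \into \ell^\infty_{1/w}$, and then the comparison of amalgam norms via their discrete characterizations (citing \cite[Proposition 3.7]{fegr89}) to conclude $W(L^\infty,\Esp) \into W(L^\infty, L^\infty_{1/w}) = L^\infty_{1/w}$. You instead stay continuous: the averaging bound $\abs{f(y)} \leq \mes{V}^{-1}\int_{yV} f^\#(x')\,dx'$ combined with Lemma \ref{lemma_amfacts1}(b) applied to $f^\#$, using that $K_{L^1}(f^\#)(y)=\int_{yV}f^\#$ and $\norm{f^\#}_\Esp=\norm{f}_{W(L^\infty,\Esp)}$. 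The Fubini step you flag is indeed the only delicate point, and your sketch is sound — one should just add the routine remark that the changes of variables $(x,v)\mapsto (xv,v)$ and $v\mapsto v^{-1}$ preserve null sets on $\Gc\times V$, since $\Delta$ is bounded above and below on the relatively compact set $V$. What each approach buys: the paper's is a two-line reduction to citable machinery; yours is self-contained within the lemmas already proved, avoids the discrete spaces $\Espd$ entirely, and yields an explicit constant. For (b), the paper obtains the bound $\norm{f*g}_{L^\infty_{1/w}}\lesssim \norm{f}_{L^\infty_{1/w}}\norm{g}_{L^1_w}$ from the pairing $L^\infty_{1/w}\pointdot L^1_w \into L^1$ together with $\norm{\iv{g}}_{L^1_w}=\norm{g}_{L^1_w}$ and $\norm{L_x}_{L^1_w\to L^1_w}\leq w(x)$, which is precisely your weighted Young computation unwound (your identity $w(t^{-1})\Delta(t^{-1})=w(t)$ is Equation \eqref{weight_w_delta}); both proofs then get continuity from density of the continuous compactly supported functions in $L^1_w$, with your dominated-convergence verification that each $f*g_n$ is continuous supplying a detail the paper leaves implicit.
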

\begin{proof}
By Lemma \ref{lemma_amfacts2}, $\Espd \into \ell^\infty_{1/w}$.
This implies that,
\begin{align*}
W(L^\infty, \Esp) \into
W(L^\infty, L^\infty_{1/w})=L^\infty_{1/w},
\end{align*}
(see for example \cite[Proposition 3.7]{fegr89}). This proves part (a).
The embedding,
\begin{align*}
W(L^\infty, \Esp) \pointdot L^1_w
\into L^\infty_{1/w} \pointdot L^1_w \into L^1, 
\end{align*}
implies that,
$W(L^\infty, \Esp)*L^1_w = W(L^\infty, \Esp)*{\iv{L^1_w}}
\into L^\infty_{1/w}$. Now part (b) follows from the fact
that the class of continuous,
compactly supported functions is dense in $L^1_w$.
\end{proof}

\subsection{Weak and strong amalgam norms}
\label{sec_weak_am}
We now introduce some variations of the amalgam
spaces $W(L^\infty,L^1_w)$, $W_R(L^\infty,L^1_w)$.
We do so in order to deal with some technicalities
involving right convolution actions on the spaces
$W(L^\infty, \Esp)$. For an IN group,
the spaces $W(L^\infty, \Esp)$ are right $L^1_w$ modules,
but for a general group $\Gc$, they are only right
$W(L^\infty,L^1_w)$ modules. We will now introduce
a space between $L^1_w$ and $W(L^\infty,L^1_w)$
that acts on the spaces $W(L^\infty, \Esp)$ from the right
and collapses to $L^1_w$ in the case that $\Gc$ is an IN group.

Similarly, we will define a certain subspace of
$W(L^\infty,L^1_w) \cap W_R(L^\infty,L^1_w)$ that reduces to
$W(L^\infty,L^1_w)$ when $\Gc$ is an IN group. The introduction
of this second space is not essential but a matter of convenience.
Its use is not required by any of the applications in Section
\ref{sec_applications}.

For an admissible weight $w$,
let the left and right \emph{weak amalgam spaces} be defined by
\begin{align*}
\wweakl &:= \set{f \in L_{\mathrm{loc}}^1}{\chi_V*\abs{f} \in
W(L^\infty,L^1_w)},
\\
\wweak &:= \set{f \in L_{\mathrm{loc}}^1}{\abs{f}*\chi_V \in
W_R(L^\infty,L^1_w)},
\end{align*}
and endow them with the norms,
\begin{align*}
\norm{f}_\wweakl &:= \norm{\chi_V*\abs{f}}_{W(L^\infty,L^1_w)}
= \norm{(\chi_V*\abs{f})^\#}_{L^1_w},
\\
\norm{f}_\wweak &:= \norm{\abs{f}*\chi_V}_{W_R(L^\infty,L^1_w)}
= \norm{(\abs{f}*\chi_V)_\#}_{L^1_w}.
\end{align*}
These spaces are related by
$\norm{f}_\wweakl = \norm{\iv{f}}_\wweak$.

Consider also the \emph{strong amalgam space} defined as,
\begin{align*}
\wstrong := W_R(L^\infty, W(L^\infty,L^1_w)).
\end{align*}
Hence, the norm of a function $f \in \wstrong$ is given by,
\begin{align*}
\norm{f}_\wstrong = \norm{(f_\#)^\#}_{L^1_w}.
\end{align*}
We now observe how these new spaces are related to the classical ones.
\begin{prop}
\label{prop_weak_st_embeddings}
Let $w$ be an admissible weight. Then the following holds.
\begin{itemize}
\item[(a)]
\[W(L^\infty,L^1_w) \hookrightarrow \wweakl \hookrightarrow L^1_w,\]
and
\[W_R(L^\infty,L^1_w) \hookrightarrow \wweak \hookrightarrow L^1_w.\]
\item[(b)] If $\Gc$ is an IN group then, 
\begin{align*}
\wweak =\wweakl= L^1_w.
\end{align*}
\item[(c)] $\wstrong \hookrightarrow W(L^\infty,L^1_w)
\cap W_R(L^\infty,L^1_w)$.
\item[(d)] If $\Gc$ is an IN group then, 
\[W(L^\infty,L^1_w)=W_R(L^\infty,L^1_w)=\wstrong.\]
\end{itemize}
\end{prop}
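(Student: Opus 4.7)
The plan is to treat the four statements in the order (c), (a), (d), (b), using throughout only elementary pointwise comparisons between $f^\#$ and $f_\#$ together with Fubini and the local boundedness of $w$ on relatively compact sets. For part (c), the key pointwise observation is that
\[
(f_\#)^\#(x) \;=\; \supess_{y \in V}\supess_{z \in V}\,|f(zxy)|
\]
dominates both $f^\#(x)$ (take $z=e$, which is an interior point of $V$) and $f_\#(x)$ (take $y=e$) almost everywhere; the two embeddings then follow from the solidity of $L^1_w$.

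For part (a), the first inclusion $W(L^\infty,L^1_w)\hookrightarrow\wweakl$ is an instance of Lemma \ref{lemma_amfacts1}(a) applied with $\Esp=L^1_w$, noting that $\chi_V\in L^1_w$ so that $\|\chi_V*|f|\|_{W(L^\infty,L^1_w)}\lesssim \|\chi_V\|_{L^1_w}\|f\|_{W(L^\infty,L^1_w)}$. For the second inclusion $\wweakl\hookrightarrow L^1_w$, I would combine the continuous embedding $W(L^\infty,L^1_w)\hookrightarrow L^1_w$ with the elementary lower bound
\begin{align*}
\int_\Gc (\chi_V*|f|)(x)\,w(x)\,dx
&= \int_V\int_\Gc |f(z)|\,w(yz)\,dz\,dy \\
&\gtrsim |V|\cdot\|f\|_{L^1_w},
\end{align*}
where the equality comes from Fubini and the left-invariant substitution $z=y^{-1}x$, and the inequality from the submultiplicativity bound $w(z)\le w(y^{-1})w(yz)$ together with the local boundedness of $w$ on the symmetric relatively compact set $V^{-1}=V$. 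The right-hand statement $W_R(L^\infty,L^1_w)\hookrightarrow\wweak\hookrightarrow L^1_w$ then follows by applying the involution $\iv{(\cdot)}$, which is an isometry of $L^1_w$ (by the admissibility relation \eqref{weight_w_delta}) and which interchanges $W$ with $W_R$ as well as $\wweakl$ with $\wweak$.

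For part (d), the equality $W(L^\infty,L^1_w)=W_R(L^\infty,L^1_w)$ in an IN group is the already-recorded consequence of $f^\#=f_\#$. Combined with (c), it remains to show $W(L^\infty,L^1_w)\hookrightarrow\wstrong$ in this setting. Here the IN invariance $xV=Vx$ gives $(f_\#)^\#(x)=(f^\#)^\#(x)=\supess_{u\in V^2}|f(xu)|$, the amalgam control function associated with the enlarged symmetric relatively compact neighborhood $V^2$ of $e$; since different such choices produce equivalent amalgam norms (cf.\ \cite[Lemma 3.5]{fegr89}), this yields $\|f\|_\wstrong\lesssim\|f\|_{W(L^\infty,L^1_w)}$.

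Finally, for part (b), the task reduces to proving $L^1_w\hookrightarrow\wweakl$, since the reverse embedding is already given by (a). Using $xV=Vx$ I would first rewrite $(\chi_V*|f|)^\#=(\chi_V*|f|)_\#$ and then pull the essential supremum into the convolution integral via the invariance, obtaining
\[
\supess_{v\in V}\int_V |f(y^{-1}vx)|\,dy
\;\lesssim\; \int_{V^2}|f(zx)|\,dz,
\]
with the implicit constants absorbing $\Delta$ on the relatively compact set $V^2$. A Fubini interchange combined with the left-invariance identity $\int |f(zx)|w(x)\,dx=\int |f(y)|w(z^{-1}y)\,dy$ and the submultiplicative bound $w(z^{-1}y)\le w(z^{-1})w(y)$ then controls $\int(\chi_V*|f|)^\#(x)\,w(x)\,dx$ by a multiple of $\bigl(\int_{V^2}w(z^{-1})\,dz\bigr)\cdot\|f\|_{L^1_w}$, which is finite since $w$ is locally bounded on $V^2$. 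The main obstacle is the careful bookkeeping in this last step: the order in which the essential supremum, the convolution integral, and the change of variable are interchanged matters, and the modular function factors arising on the relatively compact neighborhoods must be tracked so that they stay bounded and can be absorbed into the implicit constants.
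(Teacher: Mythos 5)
Your proposal is correct and, for the most part, travels the same road as the paper: part (c) via the pointwise domination of $f^\#$ and $f_\#$ by the iterated maximal function, part (a) via the convolution algebra property of $L^1_w$ for the first embedding and, for the second, the lower bound $\norm{f}_{L^1_w}\lesssim\norm{\chi_V*\abs{f}}_{L^1_w}$ obtained from submultiplicativity and the local boundedness of $w$ (your Fubini computation is the mirror image of the paper's, which treats the right-hand spaces and transfers to the left ones by the involution $\iv{\phantom{f}}$, exactly the transfer you use in the opposite direction), and part (d) via the identity $(f_\#)^\#(x)=\supess_{u\in VV}\abs{f(xu)}$ and the equivalence of amalgam norms under a change of window. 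In part (b) you genuinely deviate: the paper quotes the IN-group convolution relation $W(L^\infty,L^1_w)*L^1_w\hookrightarrow W(L^\infty,L^1_w)$ from \cite{fe83}, whereas you prove the needed module property by a direct computation with change of variables; this is fine and more self-contained. Note, though, that your careful tracking of modular factors is vacuous: IN groups are unimodular (cf.\ Remark \ref{rem_lp_are_mod}), so every $\Delta$-factor is identically $1$.

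One step should be repaired. In part (a) you justify $\norm{\chi_V*\abs{f}}_{W(L^\infty,L^1_w)}\lesssim\norm{\chi_V}_{L^1_w}\norm{f}_{W(L^\infty,L^1_w)}$ by invoking Lemma \ref{lemma_amfacts1}(a) with $\Esp=L^1_w$. That lemma presupposes that $w$ is admissible \emph{for} $\Esp$ in the sense of Equation \eqref{weight_w_admissible}, and for $\Esp=L^1_w$ this would require $u(x^{-1})=\norm{L_{x^{-1}}}_{L^1_w\to L^1_w}\lesssim w(x)$. The available bound is only $\norm{L_{x^{-1}}}\leq w(x^{-1})=\Delta(x)w(x)$ (by Equation \eqref{weight_w_delta}), and this is saturated and uncontrolled by $w(x)$ when $\Delta$ is unbounded -- for instance for the affine-group weight in the paper's footnote. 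So precisely on the non-IN groups where this proposition is not trivial, the hypothesis of the lemma you cite can fail; the paper only asserts admissibility of $w$ for the spaces $L^p_w$ under Assumption (C1), where $\Gc$ is unimodular. The fix is a one-liner parallel to the paper's own first step $(\abs{f}*\chi_V)_\#\leq f_\#*\chi_V$: use the pointwise bound $(\chi_V*\abs{f})^\#\leq\chi_V*f^\#$ and then the convolution algebra estimate $\norm{\chi_V*f^\#}_{L^1_w}\leq\norm{\chi_V}_{L^1_w}\norm{f^\#}_{L^1_w}$, which requires nothing beyond the admissibility of $w$ itself. With that substitution your argument is complete.
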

\begin{proof}
For (a) and (b) we only prove the statements concerning
the ``right'' spaces; the corresponding statements
for ``left'' spaces follow by using the involution $\iv{}$.

Let $f \in W_R(L^\infty,L^1_w)$.
Since $(\abs{f}*\chi_V)_\# \leq (f_\# * \chi_V)$, we have that,
\begin{align*}
\norm{f}_\wweak &= \norm{(\abs{f}*\chi_V)_\#}_{L^1_w}
\leq \norm{f_\# * \chi_V}_{L^1_w}
\\
&\leq \norm{f_\#}_{L^1_w} \norm{\chi_V}_{L^1_w}
\lesssim \norm{f}_{W_R(L^\infty,L^1_w)}.
\end{align*}
This proves the first embedding of (a). For the second one,
let $f \in \wweak$ and estimate,
\begin{align*}
\int_\Gc \abs{f(x)} w(x) dx
&\lesssim
\int_\Gc \abs{f(x)} w(x) \int_\Gc \chi_V(x^{-1}y) dy dx
\\
&\leq \int_\Gc \int_\Gc \abs{f(x)} w(y^{-1}x) \chi_V(x^{-1}y) dx w(y) dy.
\end{align*}
Since $w$ is locally bounded, in the last integral $w(y^{-1}x) \lesssim 1$
and we conclude that
$\norm{f}_{L^1_w} \lesssim \norm{\abs{f}*\chi_V}_{L^1_w}$.
Now the conclusion follows from the fact that
$\abs{f}*\chi_V \leq (\abs{f}*\chi_V)_\#$.

Part (b) follows from the convolution relation,
\begin{align*}
W(L^\infty, L^1_w) * L^1_w \hookrightarrow W(L^\infty, L^1_w),
\end{align*}
which holds when $\Gc$ is an IN group.
(This follows easily from the fact that, for an IN group,
$f^\#=f_\#$,
see for example \cite[Theorem 3]{fe83})\footnote{Theorem 3 in
\cite{fe83} implies that
$W(L^\infty, L^1_w) * W(L^1,L^1_w) \hookrightarrow W(L^\infty, L^1_w)$.
It is straightforward to see that $W(L^1,L^1_w)=L^1_w$.}.

Part (c) follows from the observation that 
$f_\# \leq (f_\#)^\#$ and $f^\# \leq (f^\#)_\#$.
Finally if $\Gc$ is an IN group, for $x \in \Gc$,
$VxV=VVx$, and therefore,
\begin{align}
(f_\#)^\#(x) = \sup_{v \in V} f_\#(xv)
= \sup_{v \in V} \sup_{w \in V} \abs{f(wxv)}
= \sup_{y \in VV} \abs{f(yx)} = (f^\#)_\#(x).
\end{align}
Hence, the conclusion follows from the fact that a different choice
for the neighborhood $V$ induces an equivalent norm in $W(L^\infty, L^1_w)$.
\end{proof}
For the weak norm, we now derive the following convolution relation
(cf. Lemma \ref{lemma_amfacts1}). Again, we point out that the estimates
depend only on the weight $w$ and the constant $\consEw$ in Equation
\eqref{weight_w_admissible}.
\begin{prop}
\label{prop_weak_conv}
Let $\Esp$ be a solid, translation invariant BF space
and let $w$ be an admissible weight for it. Then,
\begin{align*}
W(L^\infty,\Esp)*\wweakl \hookrightarrow W(C_0,\Esp),
\end{align*}
together with the corresponding norm estimate.
\end{prop}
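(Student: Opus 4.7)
My strategy is to establish the pointwise bound
\[
(f*g)^\#(x) \le \tfrac{1}{\mes{V}}\,(f^\# * G)^\#(x), \qquad G := \chi_V * |g|,
\]
and then reduce the norm estimate to Lemma \ref{lemma_amfacts1}(a) and the continuity statement to Lemma \ref{lemma_amfacts25}(b).

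The crux is the derivation of the pointwise inequality, and this is the step where I expect the bulk of the care to go. The naive approach of pulling the essential supremum inside the convolution, giving $(f*g)^\# \le |f|*|g|^\#$, is inadequate for the weak norm: bounding $|g|^\#$ pointwise by a translate of $G$ would require the inclusion $zVz^{-1}\subseteq V$ uniformly in $z$, which holds only in the IN case. To sidestep this, I would symmetrize the integration variable inside the convolution. Starting from
\[
(f*g)(xv) = \int_\Gc f(y)\,g(y^{-1}xv)\,dy
\]
and inserting the identity $\tfrac{1}{\mes{V}}\int_V du = 1$, I substitute $y = zu^{-1}$ in the inner integral for each fixed $u \in V$, which picks up the modular factor $\Delta(u^{-1})$. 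Since $u^{-1} \in V^{-1} = V$, the bound $|f(zu^{-1})| \le f^\#(z)$ applies, and the remaining $u$-integral is exactly $G(z^{-1}xv)$, by the very same change of variable applied to the definition of $\chi_V * |g|$. Fubini (justified by Lemma \ref{lemma_amfacts25}(a) together with Proposition \ref{prop_weak_st_embeddings}(a), which give absolute convergence of the resulting double integral) then produces $|(f*g)(xv)| \le \tfrac{1}{\mes{V}}\,(f^\# * G)(xv)$, and taking essential supremum over $v \in V$ yields the sought pointwise bound.

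Once the pointwise inequality is in hand, the norm estimate is immediate. By definition of $W(L^\infty,\Esp)$ one has $f^\# \in \Esp$, and $G \in W(L^\infty,L^1_w)$ is exactly the definition of $g \in \wweakl$; so Lemma \ref{lemma_amfacts1}(a) yields $\|(f^\# * G)^\#\|_\Esp \lesssim \|f\|_{W(L^\infty,\Esp)}\,\|g\|_{\wweakl}$, and this controls $\|(f*g)^\#\|_\Esp = \|f*g\|_{W(L^\infty,\Esp)}$.

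To upgrade the conclusion from $W(L^\infty,\Esp)$ to $W(C_0,\Esp)$ I only need continuity of $f*g$. Proposition \ref{prop_weak_st_embeddings}(a) embeds $\wweakl \hookrightarrow L^1_w$, so $g \in L^1_w$, and Lemma \ref{lemma_amfacts25}(b) then gives $f*g \in W(L^\infty,\Esp) * L^1_w \hookrightarrow C_{1/w}$. In particular $f*g$ is continuous, which places it in $W(C_0,\Esp)$ and finishes the proof.
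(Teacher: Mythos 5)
Your proposal is correct and follows essentially the same route as the paper: both proofs establish the key pointwise domination $\abs{f*g} \lesssim f^{\#} * (\chi_V * \abs{g})$ by averaging over the neighborhood $V$ and applying Fubini (the paper uses a left-translation substitution $t \mapsto yt$ that avoids modular factors, while your right-translation substitution picks up $\Delta(u^{-1})$ which then cancels upon inverting the $u$-variable, so the bookkeeping differs but the mechanism is identical), and then conclude via Lemma \ref{lemma_amfacts1}(a). The continuity step is also exactly the paper's: $\wweakl \into L^1_w$ from Proposition \ref{prop_weak_st_embeddings}(a) combined with Lemma \ref{lemma_amfacts25}(b).
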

\begin{proof}
Let $f \in W(L^\infty,\Esp)$ and $g \in \wweakl$.
For almost every $y \in \Gc$ and $t \in V$, 
$\abs{f(y)} \leq f^\#(yt)$.
Hence, for $x \in \Gc$,
\begin{align*}
\abs{f}*\abs{g}(x)
&\leq
\int_\Gc \int_\Gc f^\#(yt) \chi_V(t^{-1}) dt \abs{g(y^{-1}x)} dy
\\
&= \int_\Gc f^\#(t) \int_\Gc \chi_V(t^{-1}y) \abs{g(y^{-1}x)} dy dt
\\
&= \int_\Gc f^\#(t) (\chi_V*\abs{g})(t^{-1}x) dt
=  f^\# * (\chi_V*\abs{g}) (x).
\end{align*}
Therefore Lemma \ref{lemma_amfacts1} implies that,
\begin{align*}
&\norm{f*g}_{W(L^\infty, \Esp)}
\leq
\norm{f^\# * (\chi_V*\abs{g})}_{W(L^\infty, \Esp)}
\\
&\qquad \lesssim
\norm{f^\#}_\Esp \norm{\chi_V*\abs{g}}_{W(L^\infty, L^1_w)}
=
\norm{f}_{W(L^\infty,\Esp)}
\norm{g}_\wweakl.
\end{align*}
It only remains to note that $f*g$ is a continuous function.
This follows from the embedding $\wweakl \into L^1_w$ in Proposition
\ref{prop_weak_st_embeddings} and Lemma \ref{lemma_amfacts25}.
\end{proof}
Using Proposition \ref{prop_weak_conv}, we can derive a 
variant of Lemma \ref{lemma_amfacts2} (b) that only
requires $g$ to be in $\wweak$.
\begin{lemma}
\label{lemma_amfacts3}
Let $\Esp$ be a solid, translation invariant BF space
and let $w$ be an admissible weight for it.
Let $\Lambda \subseteq \Gc$ be a relatively separated set. Then,
for $f \in W(L^\infty,E)$ and $g \in \wweak$, the sequence
$(\ip{f}{L_\lambda g})_{\lambda \in \Lambda}$ belongs to $\Espd(\Lambda)$
and satisfies
\begin{align*}
\norm{(\ip{f}{L_\lambda g})_\lambda}_{E^d}
\lesssim \norm{f}_{W(L^\infty,E)} \norm{g}_\wweak,
\end{align*}
where the implicit constant depends on $\rel(\Lambda)$
(cf. Equation \eqref{eq_spread}).
\end{lemma}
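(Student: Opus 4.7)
The plan is to follow the same template as the proof of Lemma \ref{lemma_amfacts2}(b), using the stronger convolution relation in Proposition \ref{prop_weak_conv} in place of Lemma \ref{lemma_amfacts1}(a) so that the hypothesis on $g$ can be relaxed from $W_R(L^\infty, L^1_w)$ to $\wweak$.

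First I would rewrite the inner products as samples of a convolution, namely
\[
\ip{f}{L_\lambda g} = (f * \iv{g})(\lambda), \qquad \lambda \in \Lambda,
\]
so that the sequence in question equals the restriction to $\Lambda$ of the continuous function $f * \iv{g}$ (the argument is insensitive to complex conjugation, which does not affect any of the relevant norms). Next, I would use the identity $\norm{\iv{g}}_\wweakl = \norm{g}_\wweak$ from Section \ref{sec_weak_am} to transfer the hypothesis $g \in \wweak$ into $\iv{g} \in \wweakl$.

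The heart of the argument is then Proposition \ref{prop_weak_conv}: combined with the previous step it yields
\[
f * \iv{g} \in W(C_0, \Esp), \qquad
\norm{f * \iv{g}}_{W(C_0, \Esp)} \lesssim \norm{f}_{W(L^\infty, \Esp)} \norm{g}_\wweak.
\]
Finally, I would apply Lemma \ref{lemma_amfacts2}(a), which samples an element of $W(C_0, \Esp)$ along a relatively separated set $\Lambda$ with an estimate whose implicit constant depends on $\rel(\Lambda)$. Chaining the two estimates gives the desired bound.

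There is no serious obstacle here beyond being careful with the involution (to match the right vs.\ left weak amalgam norms) and with the fact that pointwise evaluation at $\lambda$ requires $f * \iv{g}$ to be continuous; both points are handled for free by the two inputs already assembled (Proposition \ref{prop_weak_conv} delivers membership in $W(C_0, \Esp)$, not merely $W(L^\infty, \Esp)$, precisely so that Lemma \ref{lemma_amfacts2}(a) applies). The rest is bookkeeping of constants, all of which, as emphasized above, depend only on $w$, $\consEw$, and $\rel(\Lambda)$.
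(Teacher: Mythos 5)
Your proposal is correct and follows essentially the same route as the paper: rewrite $\ip{f}{L_\lambda g}$ as samples of $f * \iv{g}$, transfer $g \in \wweak$ to $\iv{g} \in \wweakl$ via the involution, apply Proposition \ref{prop_weak_conv} to land in $W(C_0,\Esp)$, and sample with Lemma \ref{lemma_amfacts2}(a). Your remark that Proposition \ref{prop_weak_conv} delivers continuity precisely so that pointwise sampling is legitimate is exactly the point the paper relies on implicitly.
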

\begin{proof}
As in the proof on Lemma \ref{lemma_amfacts2} (b),
$\norm{(\ip{f}{L_\lambda g})_\lambda}_{E^d}
\lesssim \norm{f*\iv{g}}_{W(C_0,\Esp)}$.
Now we can invoke Proposition \ref{prop_weak_conv}
and the fact that the involution $\iv{}$ maps
$\wweak$ into $\wweakl$ to obtain the desired conclusion.
\end{proof}

\section{The model for phase-space}
\label{sec_model}
We now introduce a general setting where there is a solid
BF space $\Esp$ (called the environment) and a certain distinguished subspace
$\SEsp$ that is the range of an idempotent integral operator
$P$.\footnote{This
is similar to the setting studied in \cite{nasu10}.}
This is the natural setting for the results of this article
and seems to be the easiest scenario to check in a number of concrete
examples (see Section \ref{sec_applications}).
In Section \ref{sec_setting_atdesc} we will consider a more particular
setting where the subspace $\SEsp$ has a distinguished atomic decomposition.
This will allow us to make fine adjustments to
the general results, as required by certain applications
(see Section \ref{sec_more_general}).

We list a number of ingredients in the form of two assumptions:
(A1) and (A2).
\begin{itemize}
\item[(A1)]
\begin{itemize}
\item  $\Esp$ is a solid, translation invariant BF space,
called \emph{the environment}.
\item $w$ is an admissible weight for $\Esp$.
\item $\SEsp$ is a closed complemented subspace of $\Esp$,
called \emph{the atomic subspace}.
\end{itemize}
\end{itemize}
The second assumption is that
the retraction $\Esp \to \SEsp$ is given by an operator
that is dominated by right convolution with a kernel in
$W(L^\infty,L^1_w) \cap W_R(L^\infty,L^1_w)$.
\begin{itemize}
\item[(A2)] We have an operator $P$ and a function $H$ satisfying
the following.
\begin{itemize}
\item $P: W(L^1,L^\infty_{1/w}) \to L^\infty_{1/w}$ is
a (bounded) linear operator,
\item $P(\Esp) = \SEsp$,
\item $P(f) = f, \mbox{for all } f \in \SEsp$,
\item $H \in W(L^\infty,L^1_w) \cap W_R(L^\infty,L^1_w)$,
\item For $f \in W(L^1,L^\infty_{1/w})$,
\begin{align}
\label{eq_P_dominated}
\abs{P(f)(x)} \leq \int_{\Gc} \abs{f(y)} H(y^{-1}x) dy,
\qquad (x \in \Gc).
\end{align}
\end{itemize}
\end{itemize}
We now observe some consequences of these assumptions.
\begin{prop}
\label{prop_P_into_am}
Under Assumptions (A1) and (A2) the following holds.
\begin{itemize} 
\item[(a)] $P$ boundedly maps $\Esp$ into $W(L^\infty,\Esp)$.
\item[(b)] $\SEsp \hookrightarrow W(L^\infty, \Esp)$.
\item[(c)] If $f \in W(L^1,L^\infty_{1/w})$, then
$\norm{P(f)}_{L^\infty_{1/w}} \lesssim
\norm{f}_{W(L^1,L^\infty_{1/w})} \norm{H}_{W_R(L^\infty,L^1_w)}$.
\item[(d)] If $f \in W(L^1,L^\infty)$, then
$\norm{P(f)}_{L^\infty} \lesssim
\norm{f}_{W(L^1,L^\infty)} \norm{H}_{W_R(L^\infty,L^1_w)}$.
\end{itemize}
\begin{rem}
Since $w \gtrsim 1$, $L^\infty \into L^\infty_{1/w}$.
\end{rem}
\end{prop}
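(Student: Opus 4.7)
The plan is to reduce all four assertions to the pointwise domination in \eqref{eq_P_dominated}, which says $|P(f)(x)| \le (|f|*H)(x)$, and then to invoke the convolution relations from Lemma \ref{lemma_amfacts1}. Note first that $P(f)$ is defined for every $f \in \Esp$: by Lemma \ref{lemma_amfacts1}(b) we have $\Esp \hookrightarrow W(L^1,L^\infty_{1/w})$, which is the domain of $P$ by hypothesis in (A2).

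For part (a), I take $f \in \Esp$. By solidity $|f| \in \Esp$, and since $H \in W(L^\infty,L^1_w)$, Lemma \ref{lemma_amfacts1}(a) gives
\begin{align*}
|f|*H \in W(L^\infty,\Esp), \qquad \norm{|f|*H}_{W(L^\infty,\Esp)} \lesssim \norm{f}_\Esp \, \norm{H}_{W(L^\infty,L^1_w)}.
\end{align*}
Combined with $|P(f)| \le |f|*H$ and the solidity of the amalgam space $W(L^\infty,\Esp)$ (which follows directly from the definition of its control function together with the solidity of $\Esp$), this yields $P(f) \in W(L^\infty,\Esp)$ with the required norm bound. Part (b) is then immediate: since $P(f) = f$ for $f \in \SEsp \subseteq \Esp$, applying (a) gives $\norm{f}_{W(L^\infty,\Esp)} = \norm{P(f)}_{W(L^\infty,\Esp)} \lesssim \norm{f}_\Esp$, producing the embedding.

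Parts (c) and (d) follow the same recipe using the two convolution estimates of Lemma \ref{lemma_amfacts1}(d). For (c), with $f \in W(L^1,L^\infty_{1/w})$ we have $|f| \in W(L^1,L^\infty_{1/w})$ (solidity of that amalgam), and the weighted estimate from Lemma \ref{lemma_amfacts1}(d) gives
\begin{align*}
\norm{|f|*H}_{L^\infty_{1/w}} \lesssim \norm{f}_{W(L^1,L^\infty_{1/w})} \, \norm{H}_{W_R(L^\infty,L^1_w)},
\end{align*}
so pointwise domination plus solidity of $L^\infty_{1/w}$ yields the claim. For (d), the unweighted part of Lemma \ref{lemma_amfacts1}(d) gives $|f|*H \in L^\infty$ with norm controlled by $\norm{f}_{W(L^1,L^\infty)} \norm{H}_{W_R(L^\infty,L^1)}$, and since $w \gtrsim 1$ we have $\norm{H}_{W_R(L^\infty,L^1)} \lesssim \norm{H}_{W_R(L^\infty,L^1_w)}$, finishing the estimate.

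There is no real obstacle here; the only small point worth spelling out is that $P(f)$ is defined on $\Esp$ (via the chain $\Esp \hookrightarrow W(L^1,L^\infty_{1/w})$) and that the amalgam spaces in question are solid, so the pointwise inequality \eqref{eq_P_dominated} transfers directly to norm bounds.
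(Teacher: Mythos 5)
Your proof is correct and follows essentially the same route as the paper, which derives (a), (c) and (d) from the pointwise bound \eqref{eq_P_dominated}, Lemma \ref{lemma_amfacts1} and the fact that $w \gtrsim 1$, and obtains (b) from (a) since $P$ is the identity on $\SEsp$. Your write-up merely makes explicit the details the paper leaves implicit (the domain chain $\Esp \into W(L^1,L^\infty_{1/w})$ via Lemma \ref{lemma_amfacts1}(b), and the solidity of the amalgam spaces needed to transfer the pointwise domination to norm estimates), which is a faithful filling-in rather than a different argument.
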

\begin{proof}
Part (a), (c) and (d) follow from Equation \eqref{eq_P_dominated},
Lemma \ref{lemma_amfacts1} and the fact that $w \gtrsim 1$.
For (b), observe that by part (a),
$P$ maps $\Esp$ into $W(L^\infty, \Esp)$
and coincides with the identity operator on $\SEsp$.
\begin{rem}
\label{rem_setting_unif}
The estimates in Proposition \ref{prop_P_into_am} hold uniformly for all the
spaces $\Esp$ with the same weight $w$ and the same constant $\consEw$ (cf.
Equation \eqref{weight_w_admissible}).
\end{rem}
In the applications the same projection $P$ will be used with different
spaces $\Esp$ and corresponding subspaces $\SEsp$, providing a unified
analysis
of a whole class of functional spaces. This is why Remark
\ref{rem_setting_unif}
is relevant.

\end{proof}
\section{Approximation of phase-space projections}
\label{sec_desc_proy}
In this section we prove the main technical estimate of the article.
Given the setting from Section \ref{sec_model} and
a partition of unity $\sum_\gamma \eta_\gamma \equiv 1$,
we will show that the phase-space projection $P(f)$ from Section
\ref{sec_model} can be resynthesized from the
phase-space localized pieces
$\sett{P(f \eta_\gamma)}_\gamma$.
Note that $P(f)$ can be trivially recovered from
$\sett{P(f\eta_\gamma)}_\gamma$ by simply summing all these
functions.
We will prove that this reconstruction can also be achieved by
placing the localized pieces on top of the (morally) corresponding
regions of the phase-space. This controlled synthesis will then
allow us to quantify the relation between
$P(f)$ and $\sett{P(f \eta_\gamma)}_\gamma$ and yield the
main result on the characterization of the norm of $\SEsp$.
\subsection{Setting}
Let us first formally introduce all the required ingredients.
Suppose that Assumptions (A1) and (A2) from Section \ref{sec_model}
hold. We now state
Assumption (B1) introducing the partition of unity
covering phase-space and the norm used to measure it.
\begin{itemize}
\item[(B1)]
\begin{itemize}\setlength{\itemsep}{+2mm}
\item $\Gamma \subseteq \Gc$ is a relatively separated set.
\item $\set{\eta_\gamma}{\gamma \in \Gamma}$ is a set of
$\wweak$-molecules enveloped by a function $g$. More precisely,
\begin{itemize}\setlength{\itemsep}{+2mm}
\item $\abs{\eta_\gamma(x)} \leq g(\gamma^{-1}x),
\qquad (x \in \Gc, \gamma \in \Gamma)$,
\item $g \in \wweak$.
\end{itemize}
\item $\sett{\eta_\gamma}_\gamma$ is a bounded partition
of unity. That is, 
\[
\sum_\gamma \eta_\gamma \equiv 1,
\quad \mbox{ and } \quad
\sum_\gamma \abs{\eta_\gamma} \in L^\infty(\Gc).
\]
\item $\Bsp$ is a solid, isometrically left-translation invariant
Banach space such that $W(L^\infty,L^1_w) \hookrightarrow \Bsp$.
\end{itemize}
\end{itemize}
\begin{rem}
\label{rem_B_into}
By Lemma \ref{lemma_amfacts1}, $\Bsp \into W(L^1,L^\infty)$.
In addition, by the definition of translation invariant space
$L^1*\Bsp \hookrightarrow \Bsp$.
\end{rem}
\begin{rem}
Note that the conditions in (B1) allow for the usual bounded uniform
partitions of unity \cite{fe83, fegr85, fe87} - where functions are supported
on a family of compact sets having a bounded number of overlaps - but also
for functions having non-compact support.
\end{rem}
\subsection{Vector-valued analysis and synthesis}
Let us now describe the operators mapping a function $f$
into its phase-space localized pieces, by means of the partition
of unity $\sett{\eta_\gamma}_\gamma$. Let the \emph{analysis operator}
$\coefv$ be formally defined by,
\begin{align}
\label{eq_coefv}
\coefv(f) := \left( P(f \eta_\gamma) \right)_{\gamma \in \Gamma}.
\end{align}
For each $U$, a relatively compact neighborhood of the identity in $\Gc$,
we also formally define the \emph{synthesis operator}
$\recv_U$, acting on a sequence of functions by,
\begin{align}
\label{eq_recv}
 \recv_U(\left( f_\gamma \right)_{\gamma \in \Gamma})
:= \sum_\gamma P(f_\gamma) \chi_{\gamma U}.
\end{align}
The operator $\recv_U$ will be used as an approximate left-inverse
of the vector valued analysis operator $\coefv$. Let us now establish the
mapping properties of these operators.
\begin{prop}
\label{prop_vector_synt}
Under Assumptions (A1), (A2) and (B1) the following statements hold.
\begin{itemize}
 \item[(a)]
The analysis operator $\coefv$ maps $W(L^\infty,\Esp)$ boundedly into
$\EspdB(\Gamma)$.
In particular (cf. Proposition \ref{prop_P_into_am}) it maps 
$\SEsp$ boundedly into $\EspdB(\Gamma)$.
\item[(b)]
For every relatively compact neighborhood of the identity $U$,
and every sequence $F \in \EspdB$,
the series defining $\recv_U(F)$ converge absolutely in the norm of $\Bsp$ at
every point.
Moreover, the synthesis operator $\recv_U$ maps 
$\EspdB(\Gamma)$ boundedly into $\Esp$
(with a bound that depends on U). 
\end{itemize}
\end{prop}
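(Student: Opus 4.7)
\emph{Part (a).} The plan is to derive a pointwise bound of $\|P(f\eta_\gamma)\|_\Bsp$ by a convolution evaluated at $\gamma$, and then invoke the sampling estimate of Lemma~\ref{lemma_amfacts2}(a). First, the domination (A2) combined with the envelope bound $|\eta_\gamma| \le L_\gamma g$ gives the pointwise inequality $|P(f\eta_\gamma)(x)| \le \bigl((|f|\cdot L_\gamma g)*H\bigr)(x)$. Since $H\in W(L^\infty,L^1_w)\hookrightarrow \Bsp$ by assumption (B1), and since isometric left-translation invariance of $\Bsp$ gives the Young-type inequality $\|\phi*\psi\|_\Bsp \le \|\phi\|_{L^1}\|\psi\|_\Bsp$ (cf.\ Remark~\ref{rem_B_into}), this yields
\[
 \|P(f\eta_\gamma)\|_\Bsp \lesssim \||f|\cdot L_\gamma g\|_{L^1}\,\|H\|_\Bsp = (|f|*\iv{g})(\gamma),
\]
and in particular each $P(f\eta_\gamma)$ lies in $\Bsp$ by solidity. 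By the definition of $\EspdB$ it then suffices to bound $\|(|f|*\iv{g})(\Gamma)\|_{\Espd}$. Because involution exchanges the left and right weak amalgams, $\iv{g}\in\wweakl$ with the same norm, so Proposition~\ref{prop_weak_conv} gives $|f|*\iv{g}\in W(C_0,\Esp)$ with norm $\lesssim \|f\|_{W(L^\infty,\Esp)}\|g\|_{\wweak}$, and Lemma~\ref{lemma_amfacts2}(a) controls the sampling norm by this, completing (a).

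\emph{Part (b).} Here the plan is to establish a uniform pointwise bound $|P(f_\gamma)(x)| \lesssim \|f_\gamma\|_\Bsp$, which reduces the synthesis estimate to a discrete-norm equivalence. Isometric left-translation invariance of $\Bsp$ supplies the embedding $\Bsp \hookrightarrow W(L^1,L^\infty)$ via Lemma~\ref{lemma_amfacts1}(b), and Lemma~\ref{lemma_amfacts1}(d) yields the convolution bound $\||f_\gamma|*H\|_{L^\infty}\lesssim \|f_\gamma\|_\Bsp\,\|H\|_{W_R(L^\infty,L^1_w)}$; combined with (A2) this gives the claimed uniform estimate. Then
\[
 |\recv_U(F)(x)| \le \sum_\gamma |P(f_\gamma)(x)|\,\chi_{\gamma U}(x) \lesssim \sum_\gamma \|f_\gamma\|_\Bsp\,\chi_{\gamma U}(x),
\]
and taking $\Esp$-norms, together with the equivalence of the discretized spaces $\Espd$ under change of the defining neighborhood (\cite[Lemma 3.5]{fegr89}), yields $\|\recv_U(F)\|_\Esp \lesssim \|F\|_{\EspdB}$. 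Absolute convergence of the defining series at each point is automatic, since for each $x$ only those $\gamma\in xU^{-1}$ (at most $\rel(\Gamma)$ many) contribute.

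\emph{Main obstacle.} The technical crux is exploiting the two complementary features of $\Bsp$ in (B1): the embedding $W(L^\infty,L^1_w)\hookrightarrow \Bsp$ places the kernel $H$ inside $\Bsp$ (needed in (a)), while isometric left-translation invariance simultaneously delivers $L^1 * \Bsp \hookrightarrow \Bsp$ (needed in (a)) and the embedding $\Bsp\hookrightarrow W(L^1,L^\infty)$ (needed in (b)). Together these let one replace the pointwise convolution against $H$ either by its scalar $L^1$-mass or by its uniform norm, so that the vector-valued estimates reduce to the scalar sampling and weak-amalgam convolution results of Section~\ref{sec_weak_am}.
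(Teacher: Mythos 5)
Your proof is correct and follows essentially the same route as the paper's: the identical pointwise domination $\abs{P(f\eta_\gamma)} \le (\abs{f}\, L_\gamma g)*H$ combined with $L^1*\Bsp \into \Bsp$ and $W(L^\infty,L^1_w)\into\Bsp$ to get $\norm{P(f\eta_\gamma)}_\Bsp \lesssim \ip{\abs{f}}{L_\gamma g}$ (your Proposition \ref{prop_weak_conv} plus Lemma \ref{lemma_amfacts2}(a) chain in (a) simply inlines the paper's citation of Lemma \ref{lemma_amfacts3}), and in (b) the same uniform bound $\abs{P(f_\gamma)(x)} \lesssim \norm{f_\gamma}_\Bsp \norm{H}_{W_R(L^\infty,L^1_w)}$. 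The only cosmetic deviation is the final step of (b), where the paper applies Lemma \ref{lemma_amfacts2}(c) with $\chi_U \in W_R(L^\infty,L^1_w)$, while you invoke the equivalence of the discrete norms of $\Espd$ under change of the defining neighborhood (\cite[Lemma 3.5]{fegr89}); both are valid and give the admissible $U$-dependent constant (only note that the number of terms contributing at a point is bounded by a constant depending on $U$ and $\rel(\Gamma)$, not by $\rel(\Gamma)$ itself).
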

\begin{proof}
To prove (a) let $f \in W(L^\infty,\Esp)$.
Since $\eta_\gamma$ is bounded, $f \eta_\gamma \in
W(L^\infty,\Esp) \subseteq \Esp$.
By the pointwise bound for $P$ (cf. Equation \eqref{eq_P_dominated}),
\begin{align*}
\abs{P(f \eta_\gamma)(x)} &\leq
\int_\Gc \abs{f(y)} g(\gamma^{-1}y) H(y^{-1}x) dy
\\
&= \left( \abs{f}L_\gamma g \right) * H (x).
\end{align*}
Since $\Bsp$ is solid and $L^1*\Bsp \hookrightarrow \Bsp$,
we have,
\begin{align*}
\norm{P(f \eta_\gamma)}_\Bsp \leq
\norm{H}_\Bsp
\int_\Gc \abs{f(y)} g(\gamma^{-1}y) dy
\lesssim
\norm{H}_{W(L^\infty,L^1_w)}
\int_\Gc \abs{f(y)} g(\gamma^{-1}y) dy.
\end{align*}
Now the solidity of $\Esp$ and Lemma \ref{lemma_amfacts3} yield,
\begin{align*}
 \norm{\coefv(f)}_{\EspdB}
 \lesssim
\norm{f}_{W(L^\infty,\Esp)} \norm{g}_\wweak.
\end{align*}
To prove (b) consider a family
$F \equiv (f_\gamma)_\gamma \in \EspdB$.
For each $\gamma \in \Gamma$, $f_\gamma \in \Bsp \subseteq
W(L^1,L^\infty)$, so by Proposition \ref{prop_P_into_am},
$P(f_\gamma)$ is well-defined and satisfies,
\begin{align*}
\abs{P(f_\gamma)(x)}
\lesssim
\norm{f_\gamma}_{W(L^1,L^\infty)} \norm{H}_{W_R(L^\infty,L^1_w)}
\lesssim
\norm{f_\gamma}_\Bsp \norm{H}_{W_R(L^\infty,L^1_w)}.
\end{align*}
Hence, for every $x \in \Gc$,
$\abs{\recv(F)(x)}
\lesssim
\sum_\gamma \norm{f_\gamma}_\Bsp \chi_U(\gamma^{-1}x)$.
Since $U$ is relatively compact, $\chi_U \in W_R(L^\infty,L^1_w)$
and consequently Lemma \ref{lemma_amfacts2} together with
the solidity of $\Esp$ imply that
$\norm{\recv_U(F)}_\Esp \lesssim 
\norm{\chi_U}_{W_R(L^\infty,L^1_w)}
\norm{F}_{\EspdB}$.
\end{proof}
\subsection{Approximation of the projector}
Now we can state the main result on the approximation of $P$.
For every $U$, relatively compact neighborhood of the identity in $\Gc$,
consider the approximate projector $P_U: W(L^\infty,\Esp) \to \Esp$ given by,
\begin{align}
\label{deffPU}
 P_U(f) := \sum_{\gamma \in \Gamma} P(f \eta_\gamma) \chi_{\gamma U}.
\end{align}
Since $P_U = \recv_U \circ \coefv$, $P_U$ is well-defined.
We will prove that $P_U$ approximates $P$ in the following way.
\begin{theorem}
\label{th_approx}
Given $\varepsilon >0$, there exists $U_0$, a relatively compact neighborhood
of e such that for all $U \supseteq U_0$,
\begin{align*}
 \norm{P(f) - P_U(f)}_\Esp \leq \varepsilon \norm{f}_{W(L^\infty,\Esp)},
\quad (f \in W(L^\infty,\Esp)).
\end{align*}
\end{theorem}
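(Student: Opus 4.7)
The plan is to decompose the error using the partition of unity, truncate both the envelope $g$ and the kernel $H$ into a compactly supported ``core'' plus a small tail, and then observe that for sufficiently large $U$ the core--core interaction drops out identically by a simple cocycle identity. What remains is bounded via the weak amalgam convolution inequality of Proposition~\ref{prop_weak_conv}.

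Concretely, since $\sum_\gamma \eta_\gamma \equiv 1$ and $P$ is linear, I would first establish the pointwise identity
\[
P(f)(x)-P_U(f)(x) = \sum_\gamma P(f\eta_\gamma)(x)\,\chi_{\Gc\setminus \gamma U}(x),
\]
whose absolute convergence is secured by the kernel bound~\eqref{eq_P_dominated} together with $|\eta_\gamma|\leq L_\gamma g$. Fix relatively compact symmetric neighborhoods $V_0, W_0$ of $e$, split $g = g_1 + g_2$ with $g_1 := g\,\chi_{V_0}$, $H = H_1 + H_2$ with $H_1 := H\,\chi_{W_0}$, and assume $U \supseteq V_0 W_0$. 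The geometric heart of the argument is: whenever $\chi_{\Gc\setminus \gamma U}(x)=1$, i.e.\ $\gamma^{-1}x\notin V_0 W_0$, the factorization $\gamma^{-1}x=(\gamma^{-1}y)(y^{-1}x)$ precludes the simultaneous occurrence of $\gamma^{-1}y\in V_0$ and $y^{-1}x\in W_0$. Hence the $g_1\cdot H_1$ product contributes nothing, and combining the three remaining cross-terms yields
\[
\chi_{\Gc\setminus \gamma U}(x)\,|P(f\eta_\gamma)(x)| \leq (|f|\,L_\gamma g)*H_2(x) + (|f|\,L_\gamma g_2)*H(x).
\]

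Summing over $\gamma$ and writing $G := \sum_\gamma L_\gamma g$, $G_2 := \sum_\gamma L_\gamma g_2$, the error is dominated by $(|f|\,G)*H_2 + (|f|\,G_2)*H$. Taking $\Esp$-norms, invoking Proposition~\ref{prop_weak_conv} (which gives $W(L^\infty,\Esp)*\wweakl \hookrightarrow W(C_0,\Esp) \hookrightarrow \Esp$), and using solidity to pull out $L^\infty$-norms of $G$ and $G_2$, I would obtain
\[
\|P(f)-P_U(f)\|_\Esp \lesssim \bigl(\|G\|_\infty\,\|H_2\|_{\wweakl} + \|G_2\|_\infty\,\|H\|_{\wweakl}\bigr)\,\|f\|_{W(L^\infty,\Esp)}.
\]
The tail smallness $\|H_2\|_{\wweakl}\to 0$ as $W_0\nearrow\Gc$ follows from dominated convergence applied to the local-maximum function $H^\# \in L^1_w$ that defines the $\wweakl$ norm.

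The main obstacle is the analogous statement $\|G_2\|_\infty\to 0$ as $V_0\nearrow\Gc$, together with the a~priori boundedness $G\in L^\infty$. This has to be extracted from the relative separation of $\Gamma$ and the $\wweak$-membership of $g$, most cleanly by a synthesis estimate in the spirit of Lemma~\ref{lemma_amfacts2}(c) taken at $\Esp = L^\infty$ (where any $w \geq 1$ is admissible), combined with the spreadness bound~\eqref{eq_spread} controlling the local multiplicity of the translates $\{L_\gamma g\}_\gamma$. Once this quantitative control of the pile-up of translates is in place, choosing $V_0, W_0$ large enough that each of the two products in the above display is $<\varepsilon/2$, and then setting $U_0 := V_0 W_0$, completes the proof.
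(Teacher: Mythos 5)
Your overall architecture---the pointwise decomposition of $P(f)-P_U(f)$, the core--tail splitting $g=g_1+g_2$, $H=H_1+H_2$, the cocycle observation that the core--core product vanishes once $U\supseteq V_0W_0$, and the estimation of the remaining terms through Proposition~\ref{prop_weak_conv}---is sound and geometrically parallel to the paper's proof (the same geometry reappears there in the choice $U_0=VQK$). However, the step you yourself flag as ``the main obstacle'' is a genuine gap, not a technicality. Your estimate needs $G=\sum_\gamma L_\gamma g\in L^\infty$ and $\norm{G_2}_{L^\infty}\to 0$, where $g_2=g\,\chi_{\Gc\setminus V_0}$. But under Assumption (B1) the only hypothesis on the envelope is $g\in\wweak$, which controls local \emph{averages} of $g$ (namely $(\abs{g}*\chi_V)_\#\in L^1_w$) and places no pointwise bound whatsoever on $g$. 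For instance, on $\Gc=\Rst$ with $w\equiv 1$, the function $g=\sum_n n\,\chi_{[a_n,\,a_n+n^{-3}]}$ with $a_n\to\infty$ rapidly lies in $\wweak$, yet $\sup_{x\notin V_0}g(x)=\infty$ for every compact $V_0$; so already a single translate gives $\norm{G_2}_{L^\infty}=\infty$ for all $V_0$, and the proposed smallness claim fails. The tool you suggest for extracting it, Lemma~\ref{lemma_amfacts2}(c) at $\Esp=L^\infty$, requires the atom to lie in $W_R(L^\infty,L^1_w)$, i.e.\ $g_\#\in L^1_w$, which is strictly stronger than $g\in\wweak$---and weakening exactly this hypothesis is the purpose of Section~\ref{sec_weak_am}. (The companion claim $\norm{G}_{L^\infty}<\infty$ is harmlessly repaired by using $\sum_\gamma\abs{\eta_\gamma}\in L^\infty$ directly in the $H_2$-term, but this option is unavailable for $G_2$, since your splitting happens at the level of the envelope, not of the $\eta_\gamma$.)

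The missing idea, which is the paper's key move, is to smear $f$ \emph{before} any pointwise sum over $\gamma$ is formed: using $\abs{f(y)}\lesssim\int f^\#(z)\chi_V(y^{-1}z)\,dz$ transfers the bump $\chi_V$ onto the envelope, so that $g$ only ever enters the estimates through $g*\chi_V$---a function to which your separation/pile-up argument \emph{does} apply, since $(g*\chi_V)_\#\in L^1_w$ is precisely the definition of $g\in\wweak$. The paper then packages the surviving sums into the two-variable majorant $G_U(x)=\sup_y\sum_\gamma(g*\chi_V)(\gamma^{-1}y)\chi_{\gamma(\Gc\setminus U)}(yx)$, proves $\norm{G_U}_{L^\infty}\lesssim 1$ and that $G_{U\cdot K}$ is small on any compact $K$ once $U$ is large (Lemma~\ref{lemmaGU}), and concludes from $\abs{P(f)-P_U(f)}\lesssim f^\#*(H_\#\,G_U)$ by making $\norm{H_\#G_U}_{L^1_w}$ small, splitting $H_\#$ over $K$ and its tail exactly as you split $H$. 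In summary: your treatment of the $H$-tail is correct, and your core--tail geometry is essentially the paper's, but the $g$-tail must be controlled after convolving with $\chi_V$, not pointwise; as written, your proposal proves the theorem only under the stronger assumption $g\in W_R(L^\infty,L^1_w)$.
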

\begin{rem}
The neighborhood $U_0$ can be chosen uniformly for any class of spaces
$\Esp$ having the same weight $w$ and the same constant $\consEw$ (cf.
Equation \eqref{weight_w_admissible}).

Concerning the ingredients introduced in Assumptions (A2) and (B1),
the choice of $U_0$ only depends on
$\norm{H}_{W(L^\infty,L^1_w)}$,
$\norm{H}_{W_R(L^\infty,L^1_w)}$,
$\norm{g}_{\wweak}$ and
$\rel(\Gamma)$ (cf. Equation \eqref{eq_spread}).
\end{rem}

In order to prove Theorem \ref{th_approx} we introduce the following
auxiliary function.
For each $U$, let
$G_U:\Gc \to [0,+\infty)$ be defined by,
\begin{align}
\label{deffGU}
G_U(x) := \sup_{y \in \Gc}
\sum_{\gamma \in \Gamma} (g*\chi_V)(\gamma^{-1}y)
\chi_{\gamma (\Gc \setminus U)}(yx).
\end{align}
Observe that $G_U$ is defined as a supremum of a family of sums.
The estimates for $P$ that we will derive in terms of $G_U$ are different
from the usual convolution estimates involving Wiener amalgam norms of $g$
and will be crucial for the proof of Theorem \ref{th_approx}.
Before proving that theorem we establish some necessary
estimates for the auxiliary function.

\begin{lemma}
\label{lemmaGU}
The function $G_U$ satisfies $\norm{G_U}_{L^\infty(\Gc)} \lesssim 1$
(with a bound independent of $U$).
Moreover, for every compact set $K \subseteq \Gc$,
\begin{align*}
 \norm{G_{U \pointdot K}}_{L^\infty(K)}
 \lesssim
 \int_{V \pointdot (\Gc \setminus U)} (g*\chi_V)_{\#}(x) w(x) dx.
\end{align*}
\end{lemma}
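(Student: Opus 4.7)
The plan is to reduce both estimates to a uniform pointwise bound on sums of the form $\sum_{\gamma \in \Gamma} F(\gamma^{-1}y)$ for $F \in W_R(L^\infty, L^1_w)$, which I then apply to $F := g * \chi_V$; the latter lies in $W_R(L^\infty, L^1_w)$ with $\norm{F}_{W_R(L^\infty, L^1_w)} = \norm{g}_\wweak$ by assumption (B1). For the first statement, bounding $\chi_{\gamma(\Gc\setminus U)} \leq 1$ immediately gives $G_U(x) \leq \sup_{y \in \Gc} \sum_{\gamma \in \Gamma} (g * \chi_V)(\gamma^{-1}y)$, and the right-hand side will be independent of both $x$ and $U$.

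The key auxiliary claim I would prove first is: for any relatively separated $\Gamma \subseteq \Gc$, any $F \in W_R(L^\infty, L^1_w)$, and any $y \in \Gc$,
\begin{align*}
\sum_{\gamma \in \Gamma} |F(\gamma^{-1}y)| \lesssim \norm{F}_{W_R(L^\infty, L^1_w)},
\end{align*}
with implicit constant depending only on $\rel(\Gamma)$. To prove it, I first note that $|F(\gamma^{-1}y)| \leq F_\#(\gamma^{-1}y)$ because $e \in V$. Next, for $z = \gamma v \in \gamma V$, the identity $\gamma^{-1}y = v \cdot z^{-1}y$ with $v \in V$ gives $|F(\gamma^{-1}y)| \leq F_\#(z^{-1}y)$. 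Averaging this inequality over $z \in \gamma V$ and summing, using the bounded overlap $\sum_\gamma \chi_{\gamma V} \leq \rel(\Gamma)$, yields
\begin{align*}
\sum_{\gamma \in \Gamma} |F(\gamma^{-1}y)| \leq \frac{\rel(\Gamma)}{|V|}\int_\Gc F_\#(z^{-1}y)\, dz.
\end{align*}
The substitution $z = yu$ is measure-preserving by left-invariance of Haar measure and transforms $z^{-1}y$ into $u^{-1}$, so this integral equals $\int_\Gc F_\#(u^{-1})\, du = \int_\Gc F_\#(u)\Delta(u^{-1})\, du$. Rewriting $\Delta(u^{-1}) = w(u)/w(u^{-1})$ via \eqref{weight_w_delta} and using $w(u^{-1}) \gtrsim 1$ dominates this by $w(u)$, so the integral is at most a constant times $\norm{F_\#}_{L^1_w} = \norm{F}_{W_R(L^\infty, L^1_w)}$.

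For the second statement, fix $x \in K$ and observe that $\chi_{\gamma(\Gc\setminus UK)}(yx) = 1$ requires $\gamma^{-1}yx \notin UK$, which, since $x \in K$, forces $\gamma^{-1}y \notin U$. Repeating the previous argument restricted to such $\gamma$, every $z = \gamma v \in \gamma V$ satisfies $z^{-1}y = v^{-1}\gamma^{-1}y \in V\cdot(\Gc\setminus U)$ (using $V = V^{-1}$). Consequently, the $z$-integration above acquires the extra factor $\chi_{V\cdot(\Gc\setminus U)}(z^{-1}y)$, and after the same substitution $z = yu$ the integration domain becomes $V\cdot(\Gc\setminus U)$, producing
\begin{align*}
G_{U\cdot K}(x) \lesssim \int_{V\cdot(\Gc\setminus U)} (g*\chi_V)_\#(u)\, w(u)\, du,
\end{align*}
uniformly for $x \in K$. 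The only technical subtlety is the modular-function bookkeeping in the Haar-measure change of variables; the remainder is a direct use of the local-maximum techniques from Section \ref{sec_amalgams}.
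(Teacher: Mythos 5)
Your proof is correct and takes essentially the same route as the paper's: both reduce to the constraint $\gamma^{-1}y \notin U$ (forced by $x \in K$), smear the sum $\sum_\gamma (g*\chi_V)(\gamma^{-1}y)$ over $V$-translates using the relative separation of $\Gamma$, and then convert $\Delta(u^{-1})$ into $w(u)$ via \eqref{weight_w_delta} and $w \gtrsim 1$ after inverting the variable of integration. Your averaging over $z \in \gamma V$ with the bounded-overlap bound $\sum_\gamma \chi_{\gamma V} \leq \rel(\Gamma)$ is just a Fubini-reorganized version of the paper's substitution $t \mapsto \gamma^{-1}yt$ followed by the counting estimate $\sum_\gamma \chi_V(\gamma^{-1}yt) \lesssim 1$, so the two arguments coincide in substance.
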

\begin{proof}
Let a compact set $K$ and an element $x \in K$ be given.
For $y \in \Gc$, if  $yx \in \gamma (\Gc \setminus (UK))$, then
$\gamma^{-1}yx \notin UK$, so $\gamma^{-1}y \notin U$.

Therefore, 
\begin{align*}
&\sum_\gamma (g*\chi_V)(\gamma^{-1}y)
\chi_{\gamma (\Gc \setminus (UK))}(yx)
\leq
\sum_{\gamma: \gamma^{-1}y \notin U} (g*\chi_V)(\gamma^{-1}y)
\\
&\qquad
\lesssim
\sum_{\gamma: \gamma^{-1}y \notin U}
\int_\Gc (g*\chi_V)_{\#}(t^{-1}\gamma^{-1}y) \chi_V(t) dt
\\
&\qquad =
\int_\Gc (g*\chi_V)_{\#}(t^{-1})
\sum_{\gamma: \gamma^{-1}y \notin U} \chi_V(\gamma^{-1}yt) dt.
\end{align*}
Since $\Gamma$ is relatively separated,
$\sum_\gamma \chi_V(\gamma^{-1}yt) 
= \sum_\gamma \chi_V(t^{-1}y^{-1}\gamma) \lesssim 1$.
In addition, if $\gamma^{-1}yt \in V$ and $\gamma^{-1}y \notin U$
then $t=(\gamma^{-1}y)^{-1} \gamma^{-1}yt \in (\Gc\setminus U)^{-1} V$.

Hence,
\begin{align*}
G_{U \pointdot K}(x) \lesssim
\int_{(\Gc\setminus U)^{-1} \pointdot V} (g*\chi_V)_{\#}(t^{-1}) dt
=
\int_{V \pointdot (\Gc\setminus U)} (g*\chi_V)_{\#}(t) \Delta(t^{-1}) dt.
\end{align*}
Since $\Delta(t^{-1}) \lesssim \Delta(t^{-1}) w(t^{-1}) = w(t)$
the desired bound follows.
Reexamining the computations above we see that,
$\norm{G_U}_{L^\infty(\Gc)}
\lesssim \int_\Gc (g*\chi_V)_{\#}(t) w(t) dt$.
Since $g \in \wweak$, the last integral is finite
and we get the desired uniform bound.
\end{proof}
Now we can prove Theorem \ref{th_approx}.
\begin{proof}[Proof of Theorem \ref{th_approx}]
Let $f \in W(L^\infty,\Esp)$
and let $U$ be a relatively compact neighborhood of e.
Since $\sum_\gamma \eta_\gamma \equiv 1$,
\begin{align*}
P(f) - P_U(f) =
\sum_\gamma P(f\eta_\gamma) -
\sum_\gamma P(f\eta_\gamma) \chi_{\gamma U}
= \sum_\gamma P(f\eta_\gamma) \chi_{\gamma(\Gc \setminus U)}.
\end{align*}
Consequently, by the pointwise bound for $P$ (cf. Equation
\eqref{eq_P_dominated}), for $x \in \Gc$,
\begin{align*}
 \abs{P(f)(x) - P_U(f)(x)} &\leq
\sum_\gamma \int_{\Gc} \abs{f(y)} g(\gamma^{-1}y) H(y^{-1}x)
\chi_{\gamma(\Gc \setminus U)}(x) dy.
\end{align*}
Since $\abs{f(y)} \lesssim \int f^{\#}(z) \chi_V(y^{-1}z) dz$,
we have,
\begin{align*}
 \abs{P(f)(x) - P_U(f)(x)} &\lesssim
\int_{\Gc} f^{\#}(z) \sum_\gamma
\int_{\Gc} \chi_V(y^{-1}z) g(\gamma^{-1}y) H(y^{-1}x)
\chi_{\gamma(\Gc \setminus U)}(x) dy dz.
\end{align*}
Observe that if $y^{-1}z \in V$,
then $y^{-1}x=y^{-1}z z^{-1}x \in V (z^{-1}x)$,
and therefore $H(y^{-1}x) \leq H_{\#}(z^{-1}x)$. Hence,
\begin{align*}
 \abs{P(f)(x) - P_U(f)(x)} &\lesssim
\int_{\Gc} f^{\#}(z) H_{\#}(z^{-1}x) \sum_\gamma
\int_{\Gc} g(\gamma^{-1}y) \chi_V(y^{-1}z)
\chi_{\gamma(\Gc \setminus U)}(x) dy dz
\\
&=
\int_{\Gc} f^{\#}(z) H_{\#}(z^{-1}x) \sum_\gamma
(g*\chi_V)(\gamma^{-1}z) \chi_{\gamma(\Gc \setminus U)}(x) dz
\\
&\leq
\int_{\Gc} f^{\#}(z) H_{\#}(z^{-1}x) G_U(z^{-1}x) dz
\\
&=
f^{\#} * \left(H_{\#}G_U\right)(x).
\end{align*}
Consequently,
\begin{align*}
\norm{P(f)-P_U(f)}_\Esp
\lesssim \norm{f^{\#}}_\Esp \norm{H_{\#} G_U}_{L^1_w}
= \norm{f}_{W(L^\infty,\Esp)} \norm{H_{\#} G_U}_{L^1_w}.
\end{align*}
Therefore, it suffices to show that
$\norm{H_{\#} G_U}_{L^1_w} \longrightarrow 0$, as $U$ grows to $\Gc$.
For every compact set $K \subseteq \Gc$, Lemma \ref{lemmaGU} implies that
\begin{align*}
&\int_\Gc H_{\#}(z)G_U(z) w(z) dz
\\
&\qquad
\leq 
\norm{G_U}_{L^\infty(K)} \norm{H_{\#}}_{L^1_w}
+
\norm{G_U}_{L^\infty(\Gc)}
\int_{\Gc \setminus K} H_{\#}(z) w(z) dz
\\
&\qquad
\lesssim
\norm{G_U}_{L^\infty(K)}
+
\int_{\Gc \setminus K} H_{\#}(z) w(z) dz.
\end{align*}
Given $\varepsilon>0$, we choose
a compact set containing the identity $K$
such that the second term in the last inequality
is less that $\varepsilon$.
Since $g \in \wweak$,
we can also choose a compact set containing the identity
$Q \subseteq \Gc$ such that
\begin{align*}
\int_{\Gc \setminus Q} (g*\chi_V)_{\#}(x) w(x) dx < \varepsilon. 
\end{align*}
Set $U_0 := V Q K$. If $U \supseteq U_0$ is a relatively compact
neighborhood of $e$, then, using Lemma \ref{lemmaGU},
\begin{align*}
\norm{G_U(z)}_{L^\infty(K)}
\leq \norm{G_{VQK}}_{L^\infty(K)}
\lesssim
\int_{V(\Gc \setminus (VQ))} (g*\chi_V)_{\#}(x) w(x) dx.
\end{align*}
Since $V=V^{-1}$,
we have that $V(\Gc \setminus (VQ)) \subseteq (\Gc \setminus Q)$
and consequently
$\norm{G_U(z)}_{L^\infty(K)} \lesssim \varepsilon$.
Hence, we have shown that for $U \supseteq U_0$,
$\norm{H_{\#} G_U}_{L^1_w} \lesssim \varepsilon$. This completes the proof.
\end{proof}
\section{Approximation of phase-space multipliers}
\label{sec_approx_mult}
We will now interpret Theorem \ref{th_approx} as a result
about approximation of \emph{phase-space multipliers}.
Let us suppose that Assumptions (A1), (A2) and (B1) hold.

For $m \in L^\infty(\Gc)$, the \emph{multiplier} $\multm:\SEsp \to \SEsp$
with \emph{symbol} $m$ is defined by,
\begin{align}
\label{deffMm}
 \multm(f) := P(mf),
\qquad (f \in \SEsp).
\end{align}
The operator $\multm$ is clearly bounded by Proposition \ref{prop_P_into_am}
and
the solidity of $\Esp$. When the space $\SEsp$ is taken to be the range
of the abstract wavelet transform associated with an unitary
representation of $\Gc$, these operators are called localization operators or
wavelet multipliers (see for example \cite{hewo96, wong02, limowo08}). 
(More precisely, the operators $\multm$ are unitary
equivalent to localization operators, see Section \ref{sec_app_co} for
further details). When $\SEsp$ is the range of the Short-time Fourier
transform the corresponding operators are known as STFT multipliers or
Time-Frequency localization operators (\cite{da88, cogr03, bo04-2, cogr06}).

Using the approximation of the projector from the previous section,
we construct an approximation of the multiplier $\multm$.
For a relatively compact neighborhood of the identity $U$,
let $\multmU: \SEsp \to \SEsp$ be defined by,
\begin{align*}
 \multmU(f) := PP_U(mf),
\qquad (f \in \SEsp).
\end{align*}

Now Theorem \ref{th_approx} implies the following.
\begin{theorem}
\label{th_approx_mult}
For each $m \in L^\infty(\Gc)$,
$\multmU \longrightarrow \multm$ in operator norm,
as $U$ ranges over the class of relatively compact neighborhoods of the
identity, ordered by inclusion.
Moreover, convergence is uniform on any bounded class of symbols.
\end{theorem}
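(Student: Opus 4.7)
The plan is to reduce Theorem \ref{th_approx_mult} directly to Theorem \ref{th_approx} by a short algebraic manipulation, together with the embedding $\SEsp \hookrightarrow W(L^\infty,\Esp)$ and the solidity of the amalgam norm.

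The first step is to rewrite the difference $\multm(f)-\multmU(f)$ in a form where Theorem \ref{th_approx} applies. Since $P$ is idempotent on $\Esp$ and $P(mf)\in \SEsp$, we have $P(mf)=P(P(mf))$, so for every $f\in\SEsp$,
\begin{align*}
\multm(f)-\multmU(f)
= P(mf) - P(P_U(mf))
= P\bigl(P(mf)-P_U(mf)\bigr).
\end{align*}
By Proposition \ref{prop_P_into_am}, $P$ maps $\Esp$ boundedly into $\SEsp$ (with a bound depending only on $w$ and $\consEw$), so
\begin{align*}
\norm{\multm(f)-\multmU(f)}_\SEsp
\lesssim
\norm{P(mf)-P_U(mf)}_\Esp.
\end{align*}

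The second step is to apply Theorem \ref{th_approx} to the function $mf$. For this I need $mf\in W(L^\infty,\Esp)$. Since $f\in\SEsp\hookrightarrow W(L^\infty,\Esp)$ by Proposition \ref{prop_P_into_am}(b), and since $W(L^\infty,\Esp)$ inherits solidity from $\Esp$ (the local maximum satisfies $(mf)^\#\leq\|m\|_\infty f^\#$ pointwise), we get
\begin{align*}
\norm{mf}_{W(L^\infty,\Esp)}
\leq \norm{m}_\infty \norm{f}_{W(L^\infty,\Esp)}
\lesssim \norm{m}_\infty \norm{f}_\SEsp.
\end{align*}
Given $\varepsilon>0$, Theorem \ref{th_approx} furnishes a relatively compact neighborhood $U_0$ of $e$ such that $\norm{P(mf)-P_U(mf)}_\Esp\leq \varepsilon\norm{mf}_{W(L^\infty,\Esp)}$ for every $U\supseteq U_0$ and every $mf\in W(L^\infty,\Esp)$. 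Combining the three displayed inequalities yields
\begin{align*}
\norm{\multm-\multmU}_{\SEsp\to\SEsp} \lesssim \varepsilon\,\norm{m}_\infty,
\end{align*}
with an implicit constant independent of $m$ and $U$, which gives the claimed operator-norm convergence.

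Finally, the uniformity on bounded classes of symbols is automatic from the estimate above: the right-hand side depends on $m$ only through $\|m\|_\infty$, so if $\sett{m_\alpha}_\alpha$ is any family with $\sup_\alpha\|m_\alpha\|_\infty\leq R$, then $\norm{M_{m_\alpha}-M_{m_\alpha,U}}\lesssim R\varepsilon$ simultaneously for all $\alpha$. There is no genuine obstacle here; the only thing to watch is that the application of Theorem \ref{th_approx} goes through $W(L^\infty,\Esp)$ rather than $\Esp$, which is precisely where the embedding $\SEsp\hookrightarrow W(L^\infty,\Esp)$ from Proposition \ref{prop_P_into_am}(b) is used.
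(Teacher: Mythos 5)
Your proposal is correct and follows essentially the same route as the paper's proof: both rewrite $\multm(f)-\multmU(f)$ as $P$ applied to $P(mf)-P_U(mf)$ (using $PP=P$ on $\SEsp$ and boundedness of $P$ from Proposition \ref{prop_P_into_am}), then apply Theorem \ref{th_approx} to $mf$ and control $\norm{mf}_{W(L^\infty,\Esp)} \lesssim \norm{m}_\infty \norm{f}_\Esp$ via the embedding $\SEsp \into W(L^\infty,\Esp)$, with uniformity on bounded symbol classes read off from the explicit $\norm{m}_\infty$ dependence. Your only cosmetic difference is tracking the constant $\norm{m}_\infty$ explicitly rather than absorbing it, which if anything makes the uniformity claim more transparent.
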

\begin{proof}
By Proposition \ref{prop_P_into_am}, for $f \in \SEsp$,
\begin{align*}
\norm{\multmU(f) - \multm(f)}_\Esp
= \norm{PP_U(mf)-PP(mf)}_\Esp
\lesssim
\norm{P_U(mf)-P(mf)}_\Esp.
\end{align*}
By Theorem \ref{th_approx},
$\norm{P_U(mf)-P(mf)}_\Esp \lesssim \delta(U) \norm{mf}_{W(L^\infty,\Esp)}$,
for some function $\delta$ such that $\delta(U) \longrightarrow 0$, as $U$
grows to $\Gc$.
Finally, since $m \in L^\infty(\Gc)$ and $f \in \SEsp$, 
the embedding $\SEsp \into W(L^\infty,\Esp)$ in Proposition
\ref{prop_P_into_am} implies that
$\norm{mf}_{W(L^\infty,\Esp)}
\lesssim 
\norm{f}_{W(L^\infty,\Esp)}
\lesssim
\norm{f}_\Esp$, and the conclusion follows. Observe that if $m$ belongs
to a certain bounded subset of $L^\infty$, then the last estimate holds
uniformly on that set.
\end{proof}

\section{Characterization of the atomic space with multipliers}
\label{sec_main_th}
We can finally prove the main abstract result on
the characterization of the atomic space with phase-space multipliers.
\begin{theorem}
\label{th_coverings_do_cover}
Under Assumptions (A1), (A2) and (B1), the map
\begin{align*}
\coefv: \SEsp &\to \EspdB
\\
f &\mapsto (P(f \eta_\gamma))_\gamma
\end{align*}
is left-invertible. Consequently, the following norm equivalence
holds for $f \in \SEsp$,
\begin{align*}
 \norm{f}_\Esp \approx
\norm{
({\norm{P(f\eta_\gamma)}_\Bsp})_\gamma
}_{\Espd}.
\end{align*}
\end{theorem}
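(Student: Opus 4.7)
The plan is to build the left-inverse of $\coefv$ by composing $\recv_U$ with $P$ and correcting the resulting operator by a Neumann series. The three ingredients are already in place: $\coefv: \SEsp \to \EspdB$ is bounded by Proposition \ref{prop_vector_synt}(a) together with the embedding $\SEsp \hookrightarrow W(L^\infty,\Esp)$ from Proposition \ref{prop_P_into_am}(b); $\recv_U: \EspdB \to \Esp$ is bounded by Proposition \ref{prop_vector_synt}(b); and $P:\Esp\to\SEsp$ is the bounded retraction given by Assumption (A2). So for any relatively compact neighborhood of the identity $U$, the composition
\begin{align*}
T_U := P \circ \recv_U \circ \coefv : \SEsp \longrightarrow \SEsp
\end{align*}
is a well-defined bounded operator. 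The crucial observation is that $\recv_U \circ \coefv$ is by definition the approximate projector $P_U$ from \eqref{deffPU}.

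Next I would compare $T_U$ to the identity on $\SEsp$. For $f \in \SEsp$, since $P(f)=f$ by Assumption (A2), one has
\begin{align*}
T_U(f) - f = P(P_U(f)) - P(P(f)) = P\bigl(P_U(f)-P(f)\bigr),
\end{align*}
and the boundedness of $P$ on $\Esp$ combined with Theorem \ref{th_approx} and the embedding $\SEsp \hookrightarrow W(L^\infty,\Esp)$ gives
\begin{align*}
\norm{T_U(f)-f}_\Esp
\lesssim \norm{P_U(f)-P(f)}_\Esp
\leq \varepsilon \, \norm{f}_{W(L^\infty,\Esp)}
\lesssim \varepsilon \, \norm{f}_\Esp,
\end{align*}
provided $U\supseteq U_0(\varepsilon)$. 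Fixing $\varepsilon$ small enough that $\norm{I-T_U}_{\SEsp\to\SEsp}<1$, the Neumann series makes $T_U$ invertible on $\SEsp$, and $T_U^{-1}\circ P\circ\recv_U$ is the desired left-inverse of $\coefv$.

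The norm equivalence is then immediate: the upper bound $\norm{\coefv(f)}_{\EspdB}\lesssim\norm{f}_\Esp$ is just Proposition \ref{prop_vector_synt}(a) applied via $\SEsp \hookrightarrow W(L^\infty,\Esp)$, while the lower bound comes from factoring
\begin{align*}
\norm{f}_\Esp = \norm{T_U^{-1}\,P\,\recv_U\,\coefv(f)}_\Esp
\lesssim \norm{\recv_U \coefv(f)}_\Esp
\lesssim \norm{\coefv(f)}_{\EspdB}
\end{align*}
through the boundedness of $T_U^{-1}$, $P$, and $\recv_U$. I do not expect a genuine obstacle here: the approximation Theorem \ref{th_approx} is precisely the nontrivial content, and the present theorem is essentially its Banach-space-theoretic packaging. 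The only point that requires attention is making sure that the bound from Theorem \ref{th_approx}, which is stated in terms of the $W(L^\infty,\Esp)$-norm, can be converted into a bound in terms of $\norm{f}_\Esp$; this is handled by the continuous inclusion $\SEsp\hookrightarrow W(L^\infty,\Esp)$ from Proposition \ref{prop_P_into_am}(b).
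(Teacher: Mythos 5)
Your proof is correct and is essentially the paper's own argument: the paper also factors $P_U=\recv_U\coefv$ and left-inverts via $P\recv_U\coefv=\multoneU$, the only difference being that it packages your Neumann-series step as an invocation of Theorem \ref{th_approx_mult} with symbol $m\equiv 1$ (your $T_U$ is exactly $\multoneU$), whereas you unfold that estimate directly from Theorem \ref{th_approx}. The conversion from the $W(L^\infty,\Esp)$-bound to an $\Esp$-bound via $\SEsp\hookrightarrow W(L^\infty,\Esp)$, which you flag as the one delicate point, is handled identically in the paper's proof of Theorem \ref{th_approx_mult}.
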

\begin{rem}
The fact that there is such a liberty to choose the BF space $\Bsp$
is analogous to the fact that for coorbit spaces
only the ``global behavior'' of the norm imposed on the wavelet transform
matters. See \cite[Theorem 8.3]{fegr89-1}.
\end{rem}
\begin{rem}
The norm equivalence holds uniformly for any class of spaces
$\Esp$ having the same weight $w$ and the same constant $\consEw$ (cf.
Equation \eqref{weight_w_admissible}).
\end{rem}

\begin{proof}
With the notation of Section \ref{sec_approx_mult},
using Theorem \ref{th_approx_mult} with symbol $m \equiv 1$,
we choose a relatively compact neighborhood of the identity $U$
such that $\multoneU$ is invertible.
Since the operator $P_U$ (cf. Equation \eqref{deffPU})
can be factored as $P_U=\recv_U \coefv$, we have that, $\multoneU=P \recv_U
\coefv$. Since  $\multoneU$ is invertible, $\coefv$ is left-invertible, as
claimed.
This implies
that $\norm{f}_\Esp \lesssim \norm{\coefv(f)}_{\EspdB}$,
for $f \in \SEsp$. The converse inequality is just the
boundedness of $\coefv$ and was proved in Proposition
\ref{prop_vector_synt}.
\end{proof}
\section{The case of atomic decompositions}
\label{sec_setting_atdesc}
We now consider a setting where the atomic space 
from Section \ref{sec_model} has a distinguished atomic decomposition.
We prove a number of technical results that will allow us to
finely adjust the general results of Section \ref{sec_main_th}
in order to get sharper statements for certain applications.

It is known that under very general conditions
any instance of the model introduced in Section \ref{sec_model}
has an associated atomic decomposition (see \cite{nasu10}), but
nevertheless some matters naturally pertain to the general setting
while others are specific to the case of atomic decompositions.

Let us recall Assumption (A1) from Section \ref{sec_model}.
\begin{itemize}
\item[(A1)]
\begin{itemize}
\item  $\Esp$ is a solid, translation invariant BF space,
called \emph{the environment}.
\item $w$ is an admissible weight for $\Esp$.
\item $\SEsp$ is a closed complemented subspace of $\Esp$,
called \emph{the atomic subspace}.
\end{itemize}
\end{itemize}
We now state Assumption (A2') introducing new ingredients
to the model.
\begin{itemize}
\item[(A2')]
\begin{itemize}\setlength{\itemsep}{+2mm}
\item $\Lambda \subseteq \Gc$ is a relatively separated set.
Its points will be called \emph{nodes}.
\item $\set{\varphi_\lambda}{\lambda \in \Lambda}$ and
$\set{\psi_\lambda}{\lambda \in \Lambda}$ are sets of
$\wstrong$ molecules, enveloped by a function $h$. That is,
\begin{itemize}\setlength{\itemsep}{+2mm}
\item $\abs{\varphi_\lambda(x)}, \abs{\psi_\lambda(x)}
\leq h(\lambda^{-1}x),
\qquad (x \in \Gc, \lambda \in \Lambda),$
\item $h \in \wstrong$.
\end{itemize}
The sets $\sett{\varphi_\lambda}_\lambda$ and
$\sett{\psi_\lambda}_\lambda$ will be called \emph{atoms}
and \emph{dual atoms} respectively.

\item  $\SEsp \subseteq \Esp$ has the following atomic decomposition.
\begin{itemize}
 \item[(a)] For every $c \in \Espd(\Lambda)$,
the series $\sum_\lambda c_\lambda \varphi_\lambda$ belong to $\SEsp$.
\footnote{The convergence of the series is clarified in Lemma
\ref{lemma_amfacts2}.}
 \item[(b)] For all $f \in \SEsp$, the following expansion holds,
\begin{align}
\label{at_desc}
 f = \sum_{\lambda \in \Lambda} \ip{f}{\psi_\lambda} \varphi_\lambda.
\end{align}
\end{itemize}
 \end{itemize}
\end{itemize}
Associated with the atoms we consider
the \emph{analysis} and \emph{synthesis} operators given by,
\begin{align}
\label{eq_coef}
&\coef:\Esp \to \Espd,
\quad
\coef(f) := ( \ip{f}{\psi_\lambda} )_\lambda,
\\
\label{eq_rec}
&\rec:\Espd \to \Esp,
\quad
\rec(c) := \sum_\lambda c_\lambda \varphi_\lambda.
\end{align}
We also consider their \emph{formal adjoints} given by,
\begin{align}
\label{eq_coefp}
&\coefp:\Espd \to \Esp,
\quad
\coefp(c) := \sum_\lambda c_\lambda \psi_\lambda,
\\
\label{eq_recp}
&\recp:\Esp \to \Espd,
\quad
\recp(f) := ( \ip{f}{\varphi_\lambda} )_\lambda.
\end{align}
\begin{rem}
\label{rem_coefrec_bounded}
Under Assumptions (A1) and (A2'), 
the operators $\coef, \rec, \coefp, \recp$
are well-defined and bounded
by Lemma \ref{lemma_amfacts2} and the fact that
$h \in \wstrong \subseteq W_R(L^\infty,L^1_w)$.
\end{rem}
We also consider the operator $P:\Esp \to \SEsp$
defined by $P := \rec \circ \coef$. Hence,
\begin{align}
\label{deffP}
P(f) = \sum_{\lambda \in \Lambda}
\ip{f}{\psi_\lambda} \varphi_\lambda.
\end{align}
According to (A2'), $P$ is a projector from $\Esp$ onto $\SEsp$.

We will now see that the setting introduced by (A1) and (A2')
can be regarded as an instance of the one set by (A1) and (A2).
We first introduce the function $H$ required by (A2).
Let $H: \Gc \to [0,+\infty)$ be defined by
\begin{align}
\label{deffH}
 H(x) := \sup_{y \in \Gc}
\sum_{\lambda \in \Lambda} h(\lambda^{-1}y) h(\lambda^{-1}yx).
\end{align}
The following lemma shows that $P$ and $H$ satisfy the conditions in (A2).
\begin{lemma}
\label{lemmaH}
\label{lemmaPH}
Under Assumptions (A1) and (A2') the following statements hold.
\begin{itemize}
\item[(a)] The function $H$ (cf. Equation \eqref{deffH}) belongs both to 
$W(L^\infty, L^1_w)$ and $W_R(L^\infty, L^1_w)$.

\item[(b)] For every $f \in W(L^1,L^\infty_{1/w})$, the function
$P(f) = \sum_\lambda \ip{f}{\psi_\lambda}
\varphi_\lambda$ is well-defined
(with absolute convergence at every point)
and satisfies the following pointwise estimate,
\begin{align*}
\abs{P(f)(x)} \leq \int_{\Gc} \abs{f(y)} H(y^{-1}x) dy,
\qquad (x \in \Gc).
\end{align*}
Moreover,
$\norm{P(f)}_{L^\infty_{1/w}} \lesssim
\norm{f}_{W(L^1,L^\infty_{1/w})} \norm{H}_{W_R(L^\infty,L^1_w)}$.
\end{itemize}
\end{lemma}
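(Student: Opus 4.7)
The plan is to prove part (a) by producing a pointwise dominator of $H$ that fits the amalgam-space toolkit from Section \ref{sec_amalgams}, and then derive part (b) as a short formal consequence of (a).

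For (a), the goal is to bound $H^\#$ and $H_\#$ in $L^1_w$. My approach is to obtain an estimate of the form $H(x) \lesssim \Phi(x)$ with $\Phi$ possessing the required amalgam regularity. The elementary inequality $(h_\#)^\#(zv^{-1}) \ge \abs{h(z)}$, valid for every $v \in V$, gives $\abs{h(z)} \lesssim \int_V K(zv^{-1})\, dv$, where $K := (h_\#)^\#$ lies in $L^1_w$ by virtue of $h \in \wstrong$. Applying this to each factor in the defining sum for $H$ and changing variables $\mu = \lambda^{-1}yu^{-1}$ turns the inner sum into $\sum_{\mu \in \Lambda^{-1}yu^{-1}} K(\mu)\, K(\mu \cdot uxs^{-1})$. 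The relative separation of $\Lambda$ transfers to a bounded-overlap property of $\Lambda^{-1}yu^{-1}$ that is uniform in $y$ and $u$, so the sum is dominated by an integral of the form $\int K_\#(z)\, K_\#(z \cdot uxs^{-1})\, dz$. After averaging over $u,s \in V$, this produces a convolution-type dominator of $H(x)$ to which the convolution identities of Lemma \ref{lemma_amfacts1}, together with the embeddings of Proposition \ref{prop_weak_st_embeddings}, apply to yield $H^\#, H_\# \in L^1_w$.

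For (b), assume (a). The triangle inequality and the molecular bounds on $\varphi_\lambda, \psi_\lambda$ give
$\abs{P(f)(x)} \le \int_\Gc \abs{f(y)} \Big(\sum_{\lambda \in \Lambda} h(\lambda^{-1}y)\, h(\lambda^{-1}x)\Big)\, dy.$
The key step is the identity $\lambda^{-1}x = (\lambda^{-1}y)(y^{-1}x)$, which rewrites the inner sum as $\sum_\lambda h(\lambda^{-1}y)\, h\bigl((\lambda^{-1}y)(y^{-1}x)\bigr)$; this is bounded above by $H(y^{-1}x)$ directly from the definition of $H$, since the $\sup$ over the dummy variable in that definition is at least the value attained at this particular $y$. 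Hence $\abs{P(f)(x)} \le (\abs{f}*H)(x)$. The asserted $L^\infty_{1/w}$-estimate, as well as absolute convergence of the defining series at every point $x$, both follow from Lemma \ref{lemma_amfacts1}(d) — whose proof in fact furnishes the pointwise bound at every $x \in \Gc$, justifying Fubini and the everywhere-pointwise conclusion.

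The principal obstacle is part (a): controlling the outer supremum $\sup_y$ in the definition of $H$ uniformly, without sacrificing integrability. This is precisely the motivation for introducing the strong amalgam space $\wstrong$ in Section \ref{sec_weak_am}: in a non-IN group a single local-maximum operation on $h$ does not suffice to absorb the left-and-right smearings produced by the sum-to-integral passage, whereas the two-sided local maximum $(h_\#)^\#$ is exactly what is needed to obtain a bound that is simultaneously uniform in $y$ and compatible with the $\#$ and $_\#$ operations subsequently applied to $H$.
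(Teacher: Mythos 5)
Your part (b) is sound and is exactly the paper's argument: the envelope bounds plus the identity $\lambda^{-1}x=(\lambda^{-1}y)(y^{-1}x)$ show the inner sum is one of the competitors in the supremum defining $H$ in \eqref{deffH}, giving $\abs{P(f)(x)}\le(\abs{f}*H)(x)$, and the $L^\infty_{1/w}$ bound and everywhere-absolute convergence then follow from Lemma \ref{lemma_amfacts1}(d). Your plan for part (a) — smear $h$, convert the lattice sum to a convolution-type integral via the relative separation of $\Lambda$, and close with the convolution algebra $L^1_w$ — is also the paper's strategy, which records the outcome as $H^{\#}(x)\lesssim\int_\Gc h_{\#}(t^{-1})(h_{\#})^{\#}(t^{-1}x)\,dt$ and $H_{\#}(x)\lesssim\int_\Gc (h_{\#})^{\#}(t^{-1})h_{\#}(t^{-1}x)\,dt$, whence $\norm{H}_{W(L^\infty,L^1_w)}+\norm{H}_{W_R(L^\infty,L^1_w)}\lesssim\norm{h}_\wstrong^2$.

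There is, however, a genuine gap in your sum-to-integral step: the claimed domination
\begin{align*}
\sum_{\mu\in\Lambda^{-1}yu^{-1}}K(\mu)\,K(\mu\, uxs^{-1})
\ \lesssim\ \int_\Gc K_\#(z)\,K_\#(z\, uxs^{-1})\,dz
\end{align*}
fails for non-unimodular $\Gc$. To justify it one tiles: around each $\mu$ the bounds $K(\mu)\le K_\#(z)$ and $K(\mu\,uxs^{-1})\le K_\#(z\,uxs^{-1})$ hold for $z\in V\mu$, and the family $\sett{V\mu}_\mu$ indeed has overlap bounded by $\rel(\Lambda)$ uniformly in $y,u$ (since $z\in V\mu$ iff $\lambda\in yu^{-1}z^{-1}V$, a left-translate count for $\Lambda$) — so your uniformity claim is correct — but the left Haar measure of the tile is $\mes{V\mu}=\Delta(\mu)\mes{V}$, so dividing by it introduces a factor $\Delta(\mu)^{-1}$ that is unbounded uniformly in $y$ whenever $\Delta\not\equiv 1$. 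Switching to right tiles $\mu V$ fixes the measure but destroys uniform overlap, since the relevant counting sets then involve conjugates $zVz^{-1}$. The affine and shearlet groups — the paper's principal non-IN applications — are non-unimodular, so the step as written breaks exactly where the lemma is most needed. The repair is the inversion of the integration variable built into the paper's displayed estimates: the correct dominator is $\int_\Gc K_\#(t^{-1})K_\#(t^{-1}uxs^{-1})\,dt$, i.e.\ the integrand acquires the density $\Delta(z^{-1})$, and the weighted estimate still closes because the admissibility condition \eqref{weight_w_delta} yields $\norm{\iv{f}}_{L^1_w}=\norm{f}_{L^1_w}$. With that correction (and the routine remark that the extra local maxima produced by your triple smearing only change $V$ to a power of $V$, which gives equivalent norms), your argument coincides with the paper's proof.
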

\begin{proof}
Part (a) follows from a straightforward computation.
One can first establish the estimates,
\begin{align*}
H^{\#}(x) \lesssim 
\int_\Gc h_{\#}(t^{-1}) (h_{\#})^{\#} (t^{-1}x) dt,
\quad \mbox{and} \quad
H_{\#}(x) \lesssim 
\int_\Gc (h_{\#})^{\#}(t^{-1}) (h_{\#}) (t^{-1}x) dt,
\end{align*}
and then deduce that
$\norm{H}_{W(L^\infty, L^1_w)} + \norm{H}_{W_R(L^\infty, L^1_w)}
\lesssim \norm{h}_\wstrong^2$.

Using the enveloping condition in (A2') we get
the desired pointwise estimate for $P$.
Part (b) then follows from part (a)
and Lemma \ref{lemma_amfacts1}.
\end{proof}
\subsection{Weak continuity of the atomic decomposition of $\SEsp$}
Suppose that Assumptions (A1) and (A2') hold.
Lemmas \ref{lemma_amfacts1} and \ref{lemma_amfacts2}
give the embeddings $\Esp \hookrightarrow W(L^1,L^\infty_{1/w})$
and $\Espd \hookrightarrow \ell^\infty_{1/w}$.
We denote by $(\Espd,\ell^1_w)$ the space $\Espd$ considered
with the restriction of the weak* star topology of $\ell^\infty_{1/w}$.
Likewise, since by Lemma \ref{lemma_amfacts1},
$W(L^1,L^\infty_{1/w})$ embeds into the dual space of
$W(L^\infty, L^1_w)$, we let $(\Esp, W(L^\infty, L^1_w))$
stand for space $\Esp$ considered with the topology induced by
the linear functionals obtained by integration against $W(L^\infty, L^1_w)$
functions. Observe that, since this family of functionals separates points,
the corresponding topology is Hausdorff.

We will now establish the continuity of the 
maps that implement the atomic decomposition of $\SEsp$
with respect to these coarser topologies. This will allow us to use density
arguments for $\SEsp$. This is irrelevant when the atomic
decomposition in Equation \eqref{at_desc} converges in the norm of $\Esp$,
but is important to make the abstract results fully applicable.
\begin{prop}
\label{prop_weak_cont}
Under Assumptions (A1) and (A2') the following statements hold.
\begin{itemize}
\item[(a)] The map $\coef: (\Esp, W(L^\infty, L^1_w)) \to (\Espd,\ell^1_w)$
is continuous.
\item[(b)] For $c \in \Espd$, the series defining $\rec(c)$
converge unconditionally in the $(\Esp, W(L^\infty, L^1_w))$ topology.
Moreover, the map $\rec: (\Espd,\ell^1_w) \to (\Esp, W(L^\infty, L^1_w))$
is continuous.
\end{itemize}
Similar statements hold for the operators $\coefp$ and $\recp$
(cf. Equations \eqref{eq_coefp} and \eqref{eq_recp}).
\end{prop}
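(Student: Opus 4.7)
My plan is to exploit the duality definitions of the two weak topologies to reduce each continuity statement to a concrete membership question about a ``transpose'' element. For (a), continuity of $\coef:(\Esp, W(L^\infty,L^1_w)) \to (\Espd,\ell^1_w)$ will follow if for every $\phi \in \ell^1_w$ the functional $f \mapsto \sum_\lambda \phi_\lambda \ip{f}{\psi_\lambda}$ coincides with pairing $f$ against some fixed $g_\phi \in W(L^\infty,L^1_w)$. For (b), continuity of $\rec:(\Espd,\ell^1_w) \to (\Esp, W(L^\infty,L^1_w))$ will follow if, for every $g \in W(L^\infty,L^1_w)$, the sequence $(\ip{\varphi_\lambda}{g})_\lambda$ lies in $\ell^1_w$, as the composition $c \mapsto \sum_\lambda c_\lambda \ip{\varphi_\lambda}{g}$ is then precisely the $\ell^1_w/\ell^\infty_{1/w}$-duality pairing and $\Espd \hookrightarrow \ell^\infty_{1/w}$.

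For the first reduction I would set $g_\phi := \sum_\lambda \overline{\phi_\lambda} \psi_\lambda$. The envelope bound $\abs{\psi_\lambda} \leq L_\lambda h$, combined with $h \in \wstrong \hookrightarrow W(L^\infty,L^1_w)$ from Proposition \ref{prop_weak_st_embeddings}(c) and the left-translation estimate $\norm{L_\lambda}_{W(L^\infty,L^1_w) \to W(L^\infty,L^1_w)} \lesssim w(\lambda)$, will force the series to converge absolutely in $W(L^\infty,L^1_w)$ with norm dominated by $\norm{\phi}_{\ell^1_w}$. A Fubini step, legitimated by the embedding $\Esp \hookrightarrow W(L^1,L^\infty_{1/w})$ from Lemma \ref{lemma_amfacts1}(b) together with the amalgam pairing $W(L^1,L^\infty_{1/w}) \cdot W(L^\infty,L^1_w) \hookrightarrow L^1$ from Lemma \ref{lemma_amfacts1}(c), will then identify $\sum_\lambda \phi_\lambda \ip{f}{\psi_\lambda}$ with $\ip{f}{g_\phi}$.

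For the second reduction, the envelope $\abs{\varphi_\lambda} \leq L_\lambda h$ yields the pointwise bound $\abs{\ip{\varphi_\lambda}{g}} \leq (\abs{g} * \iv{h})(\lambda)$. Since $\iv{h} \in W(L^\infty,L^1_w) \hookrightarrow \wweakl$ by Proposition \ref{prop_weak_st_embeddings}(a), Proposition \ref{prop_weak_conv} applied with $\Esp = L^1_w$ places $\abs{g}*\iv{h}$ in $W(C_0,L^1_w)$, and Lemma \ref{lemma_amfacts2}(a) then samples this continuous function at $\Lambda$ to produce the required $\ell^1_w$-sequence. Unconditional convergence of $\sum_\lambda c_\lambda \varphi_\lambda$ in the $(\Esp, W(L^\infty,L^1_w))$-topology will follow because, for each test $g$, the scalar series $\sum_\lambda c_\lambda \ip{\varphi_\lambda}{g}$ converges absolutely via the pairing $\ell^\infty_{1/w} \times \ell^1_w \to \ell^1$. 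The parallel statements for $\coefp$ and $\recp$ will drop out by interchanging the roles of $\varphi$ and $\psi$, as both families share the envelope $h$.

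The main obstacle will be rigorously justifying the interchange of summation and integration in (a) without any density assumption on $\Esp$ (the atomic decomposition \eqref{at_desc} is not assumed to converge in norm); this is what forces the argument through the amalgam duality of Lemma \ref{lemma_amfacts1}(c) rather than through a direct norm convergence of $g_\phi$ in $\Esp$. Once that technical point is cleared, the remainder is a careful but routine application of the amalgam and sampling lemmas already established.
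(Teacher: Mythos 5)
Your proposal is correct and takes essentially the same route as the paper, whose proof consists precisely of the formal adjoint relations $\ip{\coef(f)}{c}=\ip{f}{\coefp(c)}$ and $\ip{\rec(c)}{f}=\ip{c}{\recp(f)}$ together with an appeal to the amalgam lemmas for the interchange of summation and integration; you have simply supplied the details (membership of $\coefp(\bar{\phi})$ in $W(L^\infty,L^1_w)$ via the envelope and the translation bound, membership of $(\ip{\varphi_\lambda}{g})_\lambda$ in $\ell^1_w$ via convolution and sampling, and the Fubini step via $W(L^1,L^\infty_{1/w})\pointdot W(L^\infty,L^1_w)\into L^1$) that the paper leaves implicit. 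One small caveat: when you apply Proposition \ref{prop_weak_conv} with $\Esp=L^1_w$, the hypothesis that $w$ be admissible \emph{for} $\Esp$ fails on non-unimodular groups (it would force $w(x^{-1})\lesssim w(x)$, hence $\Delta$ bounded, ruling out e.g.\ the affine group), but the only ingredient actually needed there, $L^1_w * W(L^\infty,L^1_w)\into W(L^\infty,L^1_w)$, follows directly from $(f*g)^{\#}\leq \abs{f}*g^{\#}$ and the convolution-algebra property of $L^1_w$, so your argument survives with a one-line repair.
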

\begin{proof}
The operators $\coef, \rec, \coefp, \recp$
are formally related by,
\begin{align*}
\ip{\coef(f)}{c}=\ip{f}{\coef'{c}},
\\
\ip{\rec(c)}{f}=\ip{c}{\rec'{f}},
\end{align*}
with $f  \in \Esp$ and $c \in \Espd$.
The proposition follows easily from here.
All the technical details on interchange
of summation and integration
can be justified using
Lemmas \ref{lemma_amfacts1} and \ref{lemma_amfacts2}.
\end{proof}

\section{More general partitions of unity}
\label{sec_more_general}
Under additional assumptions we can extend Theorem
\ref{th_coverings_do_cover}
to the case where the condition on the partition of unity:
$\sum_\gamma \eta_\gamma \equiv 1$
is relaxed to: \mbox{$0 < A \leq \sum_\gamma \eta_\gamma \leq B < \infty$}.
To avoid altering the ongoing notation we keep the assumption that
$\sum_\gamma \eta_\gamma \equiv 1$ and introduce a new (generalized)
partition of unity $\sett{\theta_\gamma: \gamma \in \Gamma}$ related
to the one in Assumption (B1) by $\theta_\gamma = m \eta_\gamma$,
where $0 < A \leq m \leq B < \infty$.
This is the general form of a family of functions 
$\sett{\theta_\gamma: \gamma \in \Gamma}$ enveloped by $g$ and
whose sum is nonnegative and bounded away from zero and infinity.

Consider the setting of Section \ref{sec_setting_atdesc}.
The problem of extending Theorem \ref{th_coverings_do_cover}
to this new partition of unity can be reduced to the one
of establishing the invertibility of the multiplier $\multm$ (cf. 
Equation \eqref{deffMm}).
To this end, we will extend the atomic decomposition 
on Equation \eqref{at_desc} to an adequate Hilbert space $\Hsp$,
then prove the invertibility of $\multm$ on $\Hsp$ and finally
use the spectral invariance of a certain subalgebra 
of the algebra of bounded operators on $\ell^2$ to
deduce the invertibility of $\multm$ on $\SEsp$. This is where
certain restrictions on the geometry of $\Gc$ need to be imposed.
For the case of time-frequency decompositions and modulation spaces,
this line of reasoning is hinted on the final remark of \cite{cogr06} and
developed for a very general class of symbols and weighted modulation
spaces in \cite{grto10}.

Suppose that Assumptions (A1) and (A2') hold (cf. Section
\ref{sec_setting_atdesc}). We will now introduce Assumptions (C1) and (C2)
and present the extension of Theorem \ref{th_coverings_do_cover}.
\subsection{Assumption (C1)}
We will use a key result from from \cite{fegrle08}. To this end we introduce
the following conditions for a discrete group $\Omega$ and a weight $u$ on
it.
\begin{definition}
We say that the pair $(\Omega,u)$ satisfies the FGL-conditions
if the following holds.
\begin{itemize}
\item $\Omega$ is a discrete, amenable, rigidly symmetric group.
\item $u: \Omega \to [1,\infty)$ is a submultiplicative, symmetric
weight that satisfies $u(e)=1$ and,
\begin{align*}
&\lim_{n \rightarrow +\infty} \sup_{x \in U^n} u(x)^{1/n} = 1,
\mbox{ and,}
\\
&\inf_{x \in U^n \setminus U^{n-1}} u(x) \approx
\sup_{x \in U^n \setminus U^{n-1}} u(x),
\quad (n \in \Nst),
\end{align*}
for some symmetric generating subset $U$ of $\Omega$, containing the identity
element.
\end{itemize}
\end{definition}
For an explanation of the FGL-conditions and their relation to
other notions for groups (such as polynomial growth) see
\cite{fegrle08, fegrle06} and the references therein. 
In Proposition \ref{prop_suf_c1} we give more concrete sufficient
conditions for the applicability of the FGL-conditions to our setting.

Now we introduce the following assumption on the geometry
of $\Gc$ and the set of nodes $\Lambda$ that provides the atomic
decomposition of $\SEsp$. This condition will be satisfied in the
applications to
time-frequency analysis but not in the case of time-scale decompositions.

\begin{itemize}
\item[(C1)] We assume the following.
\begin{itemize}
\item $\Gc$ is an IN group.\footnote{Remember that, by
convention, we also assume that the distinguished neighborhood $V$ is
invariant under inner automorphisms.}
\item The set $\Lambda$ is a closed, discrete subgroup of $\Gc$ that,
considered as a topological group in itself, satisfies the FGL-conditions
with respect to the restriction of the weight $w$.
\end{itemize}
\end{itemize}
\begin{rem}
\label{rem_lp_are_mod}
The fact that $\Gc$ is an IN group implies that it is unimodular
(i.e. $\Delta \equiv 1$) (see \cite{palmer78}). As
a consequence, the weight $w$ is symmetric (i.e. $w(x)=w(x^{-1})$).

The submultiplicativity of $w$ now implies
that $(1/w)(xy) \leq w(x) (1/w)(y)$. This equation in turn implies
that the weight $w$ is admissible for all the spaces $L^p_w$
and $L^p_{1/w}$, $(1 \leq p \leq +\infty)$.
\end{rem}

Since under Assumption (C1) $\Lambda$ is a subgroup, it is possible to
consider convolution operators on $\Espd(\Lambda)$. The space
$\Espd(\Lambda)$ is always left-invariant, but for a general group $\Gc$ it
may not be right-invariant (even if $\Esp$ is). Using the fact that in (C1)
$\Gc$ is assumed to be an IN group, the following proposition can be easily
proved.

\begin{prop}
\label{Ed_is_right_invariant}
Under Assumption (C1),
$\Espd(\Lambda) * \ell^1_w(\Lambda) \subseteq \Espd(\Lambda)$,
with the corresponding norm estimate.
\end{prop}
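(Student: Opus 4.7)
The plan is to unfold the definitions and reduce the problem to right-translation invariance of $\Esp$, with the IN hypothesis playing the key role of converting the $V$-translates appearing in the discrete norm into ordinary right translations.

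Given $c \in \Espd(\Lambda)$ and $d \in \ell^1_w(\Lambda)$, I would start from the group-convolution formula $(c*d)(\lambda) = \sum_{\mu \in \Lambda} c_\mu d_{\mu^{-1}\lambda}$ and reindex $\lambda = \mu\nu$ to bound the control function pointwise:
\[
\sum_{\lambda \in \Lambda} |(c*d)(\lambda)|\, \chi_{\lambda V}
\;\leq\; \sum_{\mu, \nu \in \Lambda} |c_\mu|\, |d_\nu|\, \chi_{\mu\nu V}.
\]
Here is the decisive step: because $V$ is inner-invariant, $\nu V = V \nu$ for every $\nu \in \Gc$, so $\mu\nu V = \mu V \nu$ and hence $\chi_{\mu\nu V}(x) = \chi_{\mu V}(x\nu^{-1}) = (R_{\nu^{-1}} \chi_{\mu V})(x)$. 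Setting $F := \sum_\mu |c_\mu| \chi_{\mu V}$, which satisfies $\|F\|_\Esp = \|c\|_{\Espd}$ by definition, we therefore get
\[
\sum_\lambda |(c*d)(\lambda)|\, \chi_{\lambda V}
\;\leq\; \sum_{\nu \in \Lambda} |d_\nu|\, R_{\nu^{-1}} F.
\]

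From here the estimate is immediate. Taking $\Esp$-norms, applying the triangle inequality, and invoking the admissibility of $w$ together with the fact that $\Gc$ is unimodular (Remark \ref{rem_lp_are_mod}), one has $\|R_{\nu^{-1}}\|_{\Esp \to \Esp} = v(\nu) \lesssim w(\nu)$, so
\[
\|c*d\|_{\Espd}
\;\leq\; \sum_{\nu \in \Lambda} |d_\nu|\, \|R_{\nu^{-1}}\|_{\Esp \to \Esp}\, \|F\|_\Esp
\;\lesssim\; \|d\|_{\ell^1_w(\Lambda)}\, \|c\|_{\Espd}.
\]
Absolute convergence of the initial convolution (and hence of all the rearrangements) is a routine verification using Lemma \ref{lemma_amfacts2}(d), which gives $|c_\mu| \lesssim w(\mu) \|c\|_{\Espd}$, and the submultiplicativity of $w$.

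The only non-routine ingredient is the identity $\mu\nu V = \mu V \nu$. This is exactly where the IN hypothesis enters and is what the proposition is designed to exploit; without it, right translates of $\chi_{\lambda V}$ are no longer elements of the system $\{\chi_{\mu V}\}_{\mu \in \Lambda}$, the reindexing breaks down, and one is forced to use the larger module $W(L^\infty,L^1_w)$ in place of $\ell^1_w$, which is precisely the distinction motivating Section \ref{sec_weak_am}.
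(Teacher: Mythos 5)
Your proof is correct and is precisely the argument the paper has in mind: the paper states this proposition without proof, remarking only that it ``can be easily proved'' using the IN hypothesis, and your chain $\chi_{\mu\nu V} = R_{\nu^{-1}}\chi_{\mu V}$ (valid since $\nu V = V\nu$ by the assumed invariance of $V$), followed by $\norm{R_{\nu^{-1}}}_{\Esp \to \Esp} = v(\nu) \lesssim w(\nu)$ (using unimodularity of the IN group $\Gc$ and the admissibility of $w$), is exactly the intended route. The one step you label ``immediate'' --- the infinite triangle inequality --- does deserve the half-line of justification you gesture at: absolute summability of $\sum_\nu |d_\nu| \norm{R_{\nu^{-1}}F}_\Esp$ makes the partial sums Cauchy in the Banach space $\Esp$, and the embedding $\Esp \into L^1_{\mathrm{loc}}$ identifies the norm limit with the pointwise sum of the nonnegative terms, after which solidity of $\Esp$ yields the stated bound.
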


Before introducing the second assumption we give some sufficient conditions
for Assumption (C1) to hold. Recall that a group is called almost connected
if the quotient by the connected component of the identity element is
compact.
\begin{prop}
\label{prop_suf_c1}
Suppose that Assumptions (A1), (A2') and (B1) hold
and that, in addition, $\Gc$ is an almost connected IN group.
Suppose that $\Lambda$ is a discrete, closed, finitely-generated
subgroup of $\Gc$
and that the weight $w$ satisfies $w(e)=1$,
the Gelfand-Raikov-Shilov condition,
\begin{align}
\label{eq_grs}
\lim_{n \rightarrow +\infty} w(\lambda^n)^{1/n}=1,
\mbox{ for all $\lambda \in \Lambda$},
\end{align}
and the condition,
\begin{align*}
\inf_{x \in U^n \setminus U^{n-1}} w(x) \approx
\sup_{x \in U^n \setminus U^{n-1}} w(x)
\mbox{, for all }n \in \Nst,
\end{align*}
for some symmetric generating subset $U$ of $\Lambda$, containing the
identity.

Then, the conditions in (C1) are satisfied.
\end{prop}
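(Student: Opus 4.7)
The plan is to verify each ingredient of Assumption (C1) one by one. The IN hypothesis on $\Gc$ is assumed outright, so the work is in showing that $(\Lambda, w|_\Lambda)$ satisfies the FGL-conditions. First I would dispatch the conditions that are essentially immediate from the hypotheses: $\Lambda$ is discrete and closed by assumption; $w|_\Lambda$ inherits submultiplicativity from $w$ and $w(e)=1$ is given; symmetry of $w$ on $\Lambda$ follows from Remark \ref{rem_lp_are_mod}, since $\Gc$ being IN forces $\Delta\equiv 1$ and hence $w(x)=w(x^{-1})$ from \eqref{weight_w_delta}; and the shell-comparability condition $\inf_{U^n\setminus U^{n-1}} w \approx \sup_{U^n\setminus U^{n-1}} w$ is part of the hypothesis verbatim.

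The substantive structural work is to show that $\Lambda$ is amenable and rigidly symmetric. My plan is to invoke the fact that an almost connected IN group has polynomial growth (this is the classical Losert/Guivarc'h theorem on IN groups, which implies they are Moore groups with polynomial growth). Since polynomial growth is inherited by closed subgroups, $\Lambda$ has polynomial growth; being moreover finitely generated, Gromov's theorem yields that $\Lambda$ is virtually nilpotent. Amenability is then automatic, and rigid symmetry of finitely generated nilpotent-by-finite groups (equivalently, finitely generated groups of polynomial growth) is a classical result going back to Losert, Hulanicki, and Leptin-Poguntke; this is exactly the setting in which \cite{fegrle08} is typically applied.

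What remains, and is the main technical obstacle, is to upgrade the cyclic Gelfand-Raikov-Shilov condition \eqref{eq_grs} to the required uniform condition $\lim_n \sup_{x\in U^n} w(x)^{1/n}=1$. Here my plan is to exploit the shell-constancy: setting $\rho_n := \sup_{x\in U^n\setminus U^{n-1}} w(x)$, submultiplicativity of $w$ together with the shell-comparability yields a sub-exponential-type estimate $\rho_{m+n}\lesssim \rho_m \rho_n$, so by Fekete's lemma $L := \lim_n \rho_n^{1/n}$ exists in $[1,\infty]$, and $\sup_{U^n} w$ has the same exponential growth rate. To prove $L=1$ I would use the fact that $\Lambda$ is virtually nilpotent and pick an element $\lambda\in\Lambda$ that projects non-trivially to the abelianization; the word length of $\lambda^n$ in $\Lambda$ (with respect to the generating set $U$) then grows linearly in $n$, so $w(\lambda^n)\approx \rho_{|\lambda^n|_U}$ with $|\lambda^n|_U\approx n$, and the cyclic GRS condition $w(\lambda^n)^{1/n}\to 1$ forces $L=1$ along this subsequence; combined with the monotonicity/subadditivity of $\log\rho_n$ this yields $L=1$, completing the verification of the FGL-conditions.
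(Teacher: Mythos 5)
Your structural half tracks the paper closely: the paper likewise deduces polynomial growth of $\Gc$ from almost connectedness plus the IN property via \cite{palmer78}, transfers it to $\Lambda$ (the paper actually proves this inheritance by a disjoint-translates counting argument rather than quoting it), then applies Gromov's theorem \cite{gromov81} to get a nilpotent subgroup of finite index, amenability from polynomial growth, and rigid symmetry from Corollary 3 of \cite{leptinpoguntke79}; symmetry of $w$ comes from unimodularity exactly as in Remark \ref{rem_lp_are_mod}. Where you genuinely diverge is the last step: the paper does \emph{not} prove the upgrade from the cyclic GRS condition \eqref{eq_grs} to the uniform condition $\lim_n \sup_{x\in U^n} w(x)^{1/n}=1$; it simply invokes Theorem 1.3 of \cite{fegrle06}, which asserts precisely this implication for finitely generated groups of polynomial growth under the shell-comparability hypothesis. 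You are attempting to reprove that cited theorem by hand, which is a legitimate alternative route, but your sketch of it has a real gap.

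The gap is in the selection of the element $\lambda$. ``Projects non-trivially to the abelianization'' is not enough: if the image of $\lambda$ in the abelianization is a nontrivial torsion element, the abelianized images of $\lambda^n$ are bounded and give no lower bound on $|\lambda^n|_U$, so linear word growth does not follow. Worse, requiring an element of \emph{infinite order} in the abelianization of $\Lambda$ itself can be unsatisfiable: the infinite dihedral group is virtually $\Zst$ (so it can occur here) but its abelianization is $(\Zst/2\Zst)^2$. The correct repair is to treat finite $\Lambda$ as trivial, and for infinite $\Lambda$ to pass to the finite-index finitely generated nilpotent subgroup $N$, which is infinite and hence has infinite abelianization; take a surjective homomorphism $\phi:N\to\Zst$ and $\lambda$ with $\phi(\lambda)\neq 0$. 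Since homomorphisms to $\Zst$ are Lipschitz for word length and finite-index subgroups are quasi-isometrically embedded, $|\lambda^n|_U\approx n$ as you need. Two smaller defects in the same step: your appeal to ``monotonicity of $\log\rho_n$'' is unjustified ($\rho_n$ need not be monotone; one only gets bounds of the form $\rho_j\leq C\,\rho_k\,\rho_{j+k}$ using symmetry of $w$), but it is also unnecessary, since Fekete gives $\lim_n\rho_n^{1/n}=\inf_n\rho_n^{1/n}=:L$, and then $\rho_{k_n}\leq C\,w(\lambda^n)$ (shell comparability, one direction only) with $k_n\gtrsim n$ forces $L\leq 1$ along the subsequence, while $w\geq 1$ (from $w(e)=1$, symmetry and submultiplicativity) gives $L\geq 1$. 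Finally, your subadditivity claim $\rho_{m+n}\lesssim\rho_m\rho_n$ does hold — in fact with constant $1$ and without shell comparability — but it requires the geodesic-prefix observation you omit: a length-$m$ prefix of a geodesic word for an element of the shell $U^{m+n}\setminus U^{m+n-1}$ lies exactly in the shell $U^m\setminus U^{m-1}$. With these repairs your self-contained argument goes through; as written, the element-selection step would fail.
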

\begin{proof}
The group $\Gc$ is an almost connected IN group and therefore has
polynomial growth (see \cite{palmer78}). Since $\Lambda$ is discrete
and closed in $\Gc$ it also has polynomial growth (with respect
to the counting measure). Indeed, using the fact that 
$\Lambda$ is discrete and closed it follows that there exist
$W$, a compact neighborhood of the identity in $\Gc$, such that
$\lambda W \cap \lambda' W = \emptyset$ for
any two distinct elements $\lambda, \lambda' \in \Lambda$.
Then, for any finite set $F \subseteq \Lambda$, the cardinality
of $F^n$ is dominated by $\mes{(FW)^n}$.

Hence, $\Lambda$ is a finitely-generated discrete group of polynomial growth.
Therefore $\Lambda$ is amenable (see \cite{palmer78}). In addition, by
Gromov's structure theorem \cite{gromov81},
$\Lambda$ has a nilpotent subgroup of finite index.
Corollary 3 from \cite{leptinpoguntke79} implies
that $\Lambda$ is rigidly symmetric (see also \cite{fegrle08}).
Finally, since $\Lambda$ is a finitely-generated discrete group of polynomial
growth, Theorem 1.3 from \cite{fegrle06} implies that the GRS condition
in Equation \eqref{eq_grs} implies the condition required in (C1).
\end{proof}
\subsection{Assumption (C2)}
In order to introduce the second assumption, suppose that Assumptions
(A1), (A2') and (C1) hold and let $\Hsp$ be the closed linear subspace of
$L^2(\Gc)$ generated by the atoms $\sett{\varphi_\lambda: \lambda \in
\Lambda}$.

Since $\Gc$
is now assumed to be unimodular, left and right translations are isometries
on $L^2(\Gc)$. Hence, the weight $w$ is also admissible
for $L^2(\Gc)$ (cf. Equation \ref{weight_w_admissible}) and
consequently the operators $\coef$ and $\rec$ 
from Section \ref{sec_setting_atdesc} map $L^2(\Gc)$ into
$\ell^2(\Lambda)$ and $\ell^2(\Lambda)$ into $L^2(\Gc)$, respectively
(cf. Equations \eqref{eq_coef} and \eqref{eq_rec}).
For clarity, when
considered with this domain and codomain we will denote these operators
by $\coef_\Hsp$ and $\rec_\Hsp$. Their adjoints will be denoted by
$\coef_\Hsp^*$ and $\rec^*_\Hsp$. Note that these operators coincide
with the maps $\coefp$ and $\recp$ on the intersection of their domains
(cf. Equations \eqref{eq_coefp} and \eqref{eq_recp}).
We also consider the operator $P_\Hsp := \rec_\Hsp \coef_\Hsp$,
which coincides with $P$ on $L^2(\Gc) \cap \Esp$.

Recall that a \emph{frame} for a Hilbert space $\Lsp$ is a collection of
vectors $\sett{e_k}_k$ such that
$\norm{v}_\Lsp \approx \norm{(\ip{v}{e_k})_k}_{\ell^2}$, for $v \in \Lsp$.
For a general reference on Hilbert-space frames see \cite{yo01, ch03}.
We now observe that the atoms of $\SEsp$ form a frame for $\Hsp$.
\begin{claim}
\label{claim_atoms_frame}
The set $\sett{\varphi_\lambda: \lambda \in \Lambda}$
is a frame for $\Hsp$.
\end{claim}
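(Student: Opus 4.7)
The plan is to promote the atomic decomposition of Assumption (A2') from the ambient space $\Esp$ to an $L^2$ statement on $\Hsp$, and then read off the frame property as a standard consequence of Hilbert-space frame theory.

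First I would establish that both $\coef_\Hsp$ and $\rec_\Hsp$ are bounded, using Lemma \ref{lemma_amfacts2} with $\Esp$ replaced by $L^2(\Gc)$. Under Assumption (C1) the group $\Gc$ is unimodular and the weight $w$ is symmetric (Remark \ref{rem_lp_are_mod}), so $w$ is admissible for $L^2$. The molecule condition $h \in \wstrong \subseteq W_R(L^\infty,L^1_w)$ then feeds parts (b) and (c) of Lemma \ref{lemma_amfacts2}, giving bounded maps $\coef_\Hsp : L^2(\Gc) \to \ell^2(\Lambda)$ and $\rec_\Hsp : \ell^2(\Lambda) \to L^2(\Gc)$. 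Consequently both $\{\varphi_\lambda\}$ (through the adjoint $\rec_\Hsp^*$) and $\{\psi_\lambda\}$ (directly through $\coef_\Hsp$) are Bessel sequences for $L^2(\Gc)$. The former yields the upper frame bound $\sum_\lambda |\langle v,\varphi_\lambda\rangle|^2 \lesssim \|v\|^2$ for every $v \in L^2(\Gc)$.

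Next I would show that $P_\Hsp = \rec_\Hsp \coef_\Hsp$ is the identity on $\Hsp$. Since $P_\Hsp$ is bounded on $L^2$ and $\Hsp$ is by definition the closed linear span of $\{\varphi_\mu\}$, it suffices to verify $P_\Hsp \varphi_\mu = \varphi_\mu$ for each $\mu \in \Lambda$. Applying Assumption (A2')(a) to $c = \delta_\mu \in \Espd$ places $\varphi_\mu$ in $\SEsp$, so (A2')(b) supplies an $\Esp$-convergent expansion $\varphi_\mu = \sum_\lambda \langle \varphi_\mu,\psi_\lambda\rangle \varphi_\lambda$. Simultaneously, since $(\langle \varphi_\mu,\psi_\lambda\rangle)_\lambda$ lies in $\ell^2$ by the Bessel property, the same series converges in $L^2$ to $\rec_\Hsp(\coef_\Hsp \varphi_\mu)=P_\Hsp\varphi_\mu$. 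Lemma \ref{lemma_amfacts2}(c) guarantees that the series also converges absolutely at every point; both norm limits must therefore agree with this pointwise limit almost everywhere, so $P_\Hsp \varphi_\mu = \varphi_\mu$ in $L^2$. Continuity of $P_\Hsp$ combined with density of $\mathrm{span}\{\varphi_\mu\}$ in $\Hsp$ then gives $P_\Hsp v = v$ for every $v \in \Hsp$.

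For the lower frame bound, the $L^2$-convergent identity $v = P_\Hsp v = \sum_\lambda \langle v,\psi_\lambda\rangle \varphi_\lambda$ yields
\[
\|v\|^2 = \langle v,v\rangle = \sum_\lambda \overline{\langle v,\psi_\lambda\rangle}\,\langle v,\varphi_\lambda\rangle ,
\]
and Cauchy-Schwarz together with the Bessel estimate for $\{\psi_\lambda\}$ gives
\[
\|v\|^2 \leq \Bigl(\sum_\lambda |\langle v,\psi_\lambda\rangle|^2\Bigr)^{1/2} \Bigl(\sum_\lambda |\langle v,\varphi_\lambda\rangle|^2\Bigr)^{1/2} \lesssim \|v\|\,\Bigl(\sum_\lambda |\langle v,\varphi_\lambda\rangle|^2\Bigr)^{1/2} ,
\]
whence cancelling $\|v\|$ and squaring produces the lower bound. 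The main obstacle is the identification $P_\Hsp \varphi_\mu = \varphi_\mu$: the atomic expansion of (A2') is phrased in the $\Esp$-norm whereas the Hilbert-space frame argument demands $L^2$-convergence, and the pointwise absolute convergence from Lemma \ref{lemma_amfacts2}(c) is precisely the bridge that forces the two norm limits to coincide almost everywhere.
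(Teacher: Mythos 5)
Your proposal is correct and follows essentially the same route as the paper: both establish boundedness of $\coef_\Hsp$ and $\rec_\Hsp$ on $L^2$ via Lemma \ref{lemma_amfacts2}, prove $\rec_\Hsp\coef_\Hsp = I$ on $\Hsp$ by checking it on (combinations of) atoms and extending by density, and then extract the two frame inequalities. Your only deviations are cosmetic: you make explicit the pointwise/$L^2$ convergence bridge that the paper leaves implicit in the assertion that $P_\Hsp$ coincides with $P$ on $L^2(\Gc)\cap\Esp$, and you obtain the lower bound by pairing $v=\sum_\lambda \ip{v}{\psi_\lambda}\varphi_\lambda$ against $v$ with Cauchy--Schwarz, which is just an unwound form of the paper's adjoint identity $f=Q\coef_\Hsp^*\rec_\Hsp^*(f)$.
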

\begin{proof}
Since $f=P(f)=P_\Hsp(f)=\rec_\Hsp \coef_\Hsp (f)$
for finite linear combinations of the atoms
$\sett{\varphi_\lambda}_\lambda$, and $\coef_\Hsp$ and $\rec_\Hsp$
are bounded, it follows that $f=\rec_\Hsp \coef_\Hsp (f)$,
for all $f \in \Hsp$. This implies that
$f=Q \coef_\Hsp^*\rec_\Hsp^* (f)$, for all $f \in \Hsp$,
where $Q$ is the orthogonal projection onto $\Hsp$.
Hence, $\norm{f}_{L^2(\Gc)} \approx \norm{\rec^*_\Hsp(f)}_{\ell^2(\Lambda)}
= \norm{(\ip{f}{\varphi_\lambda})_\lambda}_{\ell^2(\Lambda)}$,
for all $f \in \Hsp$.
\end{proof}
Since $\sett{\varphi_\lambda: \lambda \in \Lambda}$ is a frame
for $\Hsp$, it has an associated canonical dual frame, that provides
an expansion with coefficients having minimal $\ell^2$-norm
(see for example \cite{yo01, ch03}).
This dual frame does not need to coincide with our distinguished
set of dual atoms 
$\sett{\psi_\lambda: \lambda \in \Lambda}$. We will now assume
that they do coincide. This assumption will be justified in a
large number of examples.
\begin{itemize}
 \item[(C2)] We assume that the set
$\sett{\psi_\lambda: \lambda \in \Lambda}$
is the canonical dual frame of
$\sett{\varphi_\lambda: \lambda \in \Lambda}$,
considered as a frame for $\Hsp$.
\end{itemize}
Under Assumption (C2), $\sett{\psi_\lambda}_\lambda \subseteq \Hsp$
and the operator $P_\Hsp$ is the orthogonal projector
$L^2(\Gc) \to \Hsp$. Also, $\coef_\Hsp$ and $\rec_\Hsp$ are related by
$\coef_\Hsp^\dag=\rec_\Hsp$ and
$\rec_\Hsp^\dag=\coef_\Hsp$. (Here $L^\dag$ denotes the
Moore-Penrose pseudo-inverse of an operator $L$).
\subsection{Convolution-dominated operators}
For the remainder of Section \ref{sec_more_general},
we assume that conditions (A1), (A2'), (B1), (C1) and (C2)
hold.

Using the fact that $\Lambda$ is a subgroup,
it is possible to dominate operators on $\Espd$ by convolutions.
We consider the class of operators dominated by left convolution,
\begin{align*}
CD(\Lambda, w)
:= \set{T \in \bC^{\Lambda \times \Lambda}}
{\abs{T_{\lambda, \lambda'}} \leq a_{\lambda \lambda'^{-1}}
\mbox{, for some $a \in \ell^1_w(\Lambda)$}},
\end{align*}
and we endow it with the norm,
\begin{align*}
\norm{T}_{CD(\Lambda, w)}
:= \inf \set{\norm{a}_{l^1_w}}
{\abs{T_{\lambda, \lambda'}} \leq a_{\lambda \lambda'^{-1}}
\mbox{, for all $\lambda, \lambda' \in \Lambda$}}.
\end{align*}
$CD(\Lambda, w)$ is a Banach *-algebra (see \cite{fegrle08}).
We also consider the Banach *-algebra of
operators dominated by right convolution,
\begin{align*}
CD_R(\Lambda, w)
:= \set{T \in \bC^{\Lambda \times \Lambda}}
{\abs{T_{\lambda, \lambda'}} \leq a_{\lambda'^{-1} \lambda}
\mbox{, for some $a \in \ell^1_w(\Lambda)$}},
\end{align*}
and we endow it with a norm in a similar manner.
We will use a slightly adapted version of the main result from
\cite{fegrle08}.
\begin{prop}
\label{prop_spec_cd}
The inclusion $CD_R(\Lambda, w) \hookrightarrow B(\ell^2(\Lambda))$
is spectral (i.e. it preserves the spectrum of each element).\footnote{Here,
$B(\ell^2(\Lambda))$ denotes the algebra of bounded operators
on $\ell^2(\Lambda)$.}
Moreover, if $L \in CD_R(\Lambda, w)$ is a self-adjoint
operator with closed range, then its pseudo-inverse
$L^\dag$ also belongs to $CD_R(\Lambda, w)$.
\end{prop}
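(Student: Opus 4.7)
The plan is to reduce this statement to the main result of \cite{fegrle08}, which asserts that the ``left'' inclusion $CD(\Lambda, w) \hookrightarrow B(\ell^2(\Lambda))$ is spectral and that self-adjoint elements of $CD(\Lambda, w)$ with closed range have their pseudo-inverses in $CD(\Lambda, w)$. The bridge from the left algebra to the right algebra $CD_R(\Lambda, w)$ will be conjugation by the involution
\begin{align*}
J: \ell^2(\Lambda) \to \ell^2(\Lambda), \qquad (Jc)_\lambda := c_{\lambda^{-1}},
\end{align*}
which is unitary because $\lambda \mapsto \lambda^{-1}$ is a measure-preserving bijection for the counting measure on $\Lambda$.

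First I would verify by a direct matrix computation that $(JTJ)_{\lambda,\mu} = T_{\lambda^{-1},\mu^{-1}}$. If $T \in CD(\Lambda,w)$ is dominated by $a \in \ell^1_w(\Lambda)$, i.e.\ $|T_{\lambda,\mu}| \leq a_{\lambda\mu^{-1}}$, then
\begin{align*}
|(JTJ)_{\lambda,\mu}| \leq a_{\lambda^{-1}\mu} = b_{\mu^{-1}\lambda}, \qquad b_\nu := a_{\nu^{-1}}.
\end{align*}
Assumption (C1) makes $\Gc$ an IN group and hence unimodular, so by Remark \ref{rem_lp_are_mod} the weight $w$ is symmetric; therefore $\norm{b}_{\ell^1_w} = \norm{a}_{\ell^1_w}$, and consequently $JTJ \in CD_R(\Lambda,w)$ isometrically. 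The symmetric argument goes the other way, and since $J^2 = I$, conjugation by $J$ is an isometric Banach $*$-algebra isomorphism between $CD(\Lambda,w)$ and $CD_R(\Lambda,w)$, implemented by a unitary in $B(\ell^2(\Lambda))$.

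Both conclusions then transfer at once. For the spectral claim: any $T \in CD_R(\Lambda,w)$ invertible in $B(\ell^2(\Lambda))$ has $JTJ$ invertible in $B(\ell^2(\Lambda))$, hence invertible in $CD(\Lambda,w)$ by \cite{fegrle08}, and conjugating back yields the inverse of $T$ inside $CD_R(\Lambda,w)$; the same transfer works pointwise in the spectrum. For the pseudo-inverse: unitary conjugation preserves self-adjointness, closed range, and the Moore--Penrose identities, giving $(JLJ)^\dagger = JL^\dagger J$. Thus, applying the FGL pseudo-inverse theorem to $JLJ \in CD(\Lambda,w)$ and conjugating once more by $J$ deposits $L^\dagger$ in $CD_R(\Lambda,w)$.

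The only delicate point is the index bookkeeping that $J$ interchanges the two CD-conditions, and this is clean precisely because the IN-group hypothesis in (C1) makes $w$ symmetric. Beyond that, everything is a formal transfer along a unitary, and \cite{fegrle08} supplies all the genuine analysis.
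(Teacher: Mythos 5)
Your argument is correct in substance and rests on the same underlying idea as the paper's proof --- transporting the Fendler--Gr\"ochenig--Leinert result from the left algebra to the right one through the inversion map, with the symmetry of $w$ (supplied, as you note, by the IN hypothesis in (C1) via unimodularity, cf.\ Remark \ref{rem_lp_are_mod}) doing the weight bookkeeping --- but the implementation is genuinely different. The paper conjugates by nothing: it observes that $CD_R(\Lambda,w)$ is \emph{literally equal} to $CD(\Lambda^{\textit{op}},w)$ as a normed matrix algebra, that $B(\ell^2(\Lambda^{\textit{op}}))=B(\ell^2(\Lambda))$, and that $x\mapsto x^{-1}$ is an algebraic and topological isomorphism $\Lambda\to\Lambda^{\textit{op}}$ carrying the FGL-conditions over (this is where symmetry of $w$ enters), so that \cite[Corollary 6]{fegrle08} applies directly to the opposite group. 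You instead apply the FGL result to $\Lambda$ itself and move operators by the flip unitary $J$; your computation $(JTJ)_{\lambda,\mu}=T_{\lambda^{-1},\mu^{-1}}$, the passage $a\mapsto b$ with $\norm{b}_{\ell^1_w}=\norm{a}_{\ell^1_w}$, the facts $J=J^*=J^{-1}$, and the identity $(JLJ)^\dagger=JL^\dagger J$ are all correct, so the transfer is sound. What your route buys is that the FGL-conditions never have to be re-verified for a new group; what the paper's route buys is that the left/right identification is tautological at the level of matrices, with the only work being the transfer of the hypotheses.

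The one point that needs repair is the pseudo-inverse step: you attribute to \cite{fegrle08} the statement that a self-adjoint closed-range element of $CD(\Lambda,w)$ has its Moore--Penrose pseudo-inverse in $CD(\Lambda,w)$, but the cited result is the spectrality (inverse-closedness) statement only; the paper itself does not find the pseudo-inverse claim there and instead derives it as a consequence. The derivation is short and you should include it --- either for $CD(\Lambda,w)$ before conjugating, or, more economically, directly for $CD_R(\Lambda,w)$ once spectrality is established, which makes your $J$-transfer of the Moore--Penrose identities unnecessary: a spectral, hence inverse-closed, unital Banach subalgebra of $B(\ell^2(\Lambda))$ is closed under holomorphic functional calculus, and for a self-adjoint $L$ with closed range the point $0$ is isolated in (or absent from) $\sigma(L)$, so $L^\dagger=f(L)$ with $f(z)=z^{-1}$ for $z\neq 0$ and $f(0)=0$, a function holomorphic on a neighborhood of $\sigma(L)$. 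With that two-line argument inserted, your proof is complete.
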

\begin{proof}
Let $\Lambda^\textit{op}$ denote the set $\Lambda$ considered with
the opposite group operation, given by
$\lambda ._{op} \lambda' = \lambda' \lambda$. Since $x \mapsto x^{-1}$
is an algebraic and topological isomorphism between $\Lambda$
and $\Lambda^\textit{op}$ and the weight $w$ is symmetric,
it follows that $\Lambda^\textit{op}$ also satisfies the FGL-conditions with
respect to the restriction of the weight $w$. Hence, \cite[Corollary
6]{fegrle08} implies that $CD(\Lambda^\textit{op}, w)$ is a spectral
subalgebra of
$B(\ell^2(\Lambda^\textit{op}))$. Finally observe that
$CD_R(\Lambda, w)=CD(\Lambda^\textit{op}, w)$ and that
$B(\ell^2(\Lambda^\textit{op}))= B(\ell^2(\Lambda))$.

The second part of the theorem is a well-known consequence of the first one.
Since the inclusion $CD_R(\Lambda, w) \hookrightarrow B(\ell^2(\Lambda))$
is closed under inversion, it is also closed under holomorphic functional
calculus. For a self-adjoint operator with closed range $L \in CD_R(\Lambda,
w)$, its pseudo-inverse is given by $L^\dag=f(L)$, where $f(z)=z^{-1}$, for
$z \not=0$ and $f(0)=0$. $f$ is holomorphic on the spectrum of $L$ because,
since the range of $L$ is closed, 0 is an isolated point of its spectrum.
\end{proof}
\begin{rem}
\label{rem_cd_zd}
The result in \cite{fegrle08} seems to be the most appropriate one
for this context but in some cases it is also possible to apply
the results in \cite{su07-5,shisu09} to the same end. If the group
$\Lambda$ is $\Zdst$, then the desired result also follows from
\cite{ba90-1}, \cite{grle06} and \cite{sj95} with the advantage
of slightly relaxing the assumptions on the weight.
\end{rem}

We now observe that $CD_R(\Lambda, w)$ acts on $\Espd$.
\begin{prop}
\label{prop_conv_dom}
Let $T \in CD_R(\Lambda,w)$. Then the following holds.
\begin{itemize}
\item[(a)] $T$ maps $\Espd$ into $\Espd$ and
$\norm{T}_{\Espd \to \Espd} \lesssim\norm{T}_{CD_R(\Lambda,w)}$.
\item[(b)] $T: (\Espd,\ell^1_w) \to (\Espd,\ell^1_w)$ is continuous.
\end{itemize}
\end{prop}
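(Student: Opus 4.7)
The plan is to exhibit the action of $T$ on $\Espd$ as being pointwise dominated by a right convolution in $\Lambda$, and then appeal to Proposition \ref{Ed_is_right_invariant} together with the solidity of $\Espd$ (inherited from that of $\Esp$). For part (b), we dualize: a functional $c \mapsto \langle Tc, b\rangle$ with $b\in\ell^1_w$ will be shown to coincide with $\langle c, T^\top b\rangle$ for some $T^\top b\in\ell^1_w$, and is therefore continuous in the $(\Espd,\ell^1_w)$ topology.

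First, given $T\in CD_R(\Lambda,w)$ and $\varepsilon>0$, pick $a\in\ell^1_w(\Lambda)$ with $\abs{T_{\lambda,\lambda'}} \le a_{(\lambda')^{-1}\lambda}$ and $\norm{a}_{\ell^1_w}\le \norm{T}_{CD_R(\Lambda,w)}+\varepsilon$. For $c\in\Espd$, recognize the group convolution on $\Lambda$:
\[
\abs{(Tc)_\lambda} \le \sum_{\lambda'\in\Lambda} a_{(\lambda')^{-1}\lambda}\,\abs{c_{\lambda'}} = (\abs{c}*a)(\lambda).
\]
Proposition \ref{Ed_is_right_invariant} (which uses (C1), giving $\Gc$ IN and $\Lambda$ a subgroup) then gives $\abs{c}*a \in \Espd$ with $\norm{\abs{c}*a}_{\Espd}\lesssim \norm{c}_{\Espd}\norm{a}_{\ell^1_w}$. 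The solidity of $\Espd$ upgrades the pointwise bound to a norm bound on $Tc$, and letting $\varepsilon\to 0$ yields part (a).

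For part (b), fix $b\in\ell^1_w$ and define $(T^\top b)_{\lambda'} := \sum_\lambda T_{\lambda,\lambda'} b_\lambda$. The CD-bound and the symmetry of $w$ (which holds by Remark \ref{rem_lp_are_mod} since $\Gc$ is IN, hence unimodular) together with submultiplicativity yield
\[
\sum_{\lambda'} \abs{(T^\top b)_{\lambda'}} w(\lambda') \le \sum_{\mu} a_\mu w(\mu)\sum_\nu \abs{b_\nu} w(\nu) = \norm{a}_{\ell^1_w}\norm{b}_{\ell^1_w},
\]
after substituting $\mu=(\lambda')^{-1}\lambda$ and $\nu=\lambda'\mu$. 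So $T^\top b\in\ell^1_w$. Next, to justify that $\langle Tc, b\rangle = \langle c, T^\top b\rangle$ by Fubini, use the embedding $\Espd\hookrightarrow\ell^\infty_{1/w}$ from Lemma \ref{lemma_amfacts2}(d) to bound $\abs{c_{\lambda'}}\le \norm{c}_{\ell^\infty_{1/w}}w(\lambda')$, and the computation above shows the double sum converges absolutely. Hence $c\mapsto\langle Tc,b\rangle$ is one of the defining continuous functionals of the $(\Espd,\ell^1_w)$ topology, proving (b).

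The only delicate point, and arguably the main technical content, is the Fubini justification in part (b): absolute summability of the double series relies essentially on symmetry and submultiplicativity of $w$ on $\Lambda$, which are available here precisely because (C1) forces $\Gc$ to be IN (hence unimodular, so $\Delta\equiv 1$ and $w$ is symmetric). Without these properties of the IN setting the pairing would require a more careful treatment.
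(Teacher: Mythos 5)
Your proof is correct and takes essentially the same route as the paper: part (a) is the paper's argument (pointwise domination by right convolution, Proposition \ref{Ed_is_right_invariant}, solidity), and for part (b) both proofs reduce continuity in the $(\Espd,\ell^1_w)$ topology to the fact that the transpose/adjoint of $T$ acts boundedly on $\ell^1_w$, i.e.\ to weak*-continuity of $T$ on $\ell^\infty_{1/w}$. The only difference is one of execution: the paper obtains the $\ell^1_w$-bound abstractly, by applying part (a) with $\Esp = L^1_w$ (legitimate by Remark \ref{rem_lp_are_mod}) together with the closedness of $CD_R(\Lambda,w)$ under adjoints, whereas you verify it by the direct weight computation using symmetry and submultiplicativity of $w$, and your explicit Fubini justification of the pairing identity is a detail the paper leaves implicit.
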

\begin{proof}
Part (a) follows from Proposition \ref{Ed_is_right_invariant}
and the solidity of $\Espd$. For part (b), observe that the spaces
$L^1_w$ and $L^\infty_{1/w}$ satisfy the same assumptions that $\Esp$
(cf. Remark \ref{rem_lp_are_mod})
and consequently, by part (a), every operator in $CD_R(\Lambda,w)$ maps
$\ell^1_w$ into $\ell^1_w$ and $\ell^\infty_{1/w}$ into $\ell^\infty_{1/w}$.
Since the class $CD_R(\Lambda,w)$ is closed under taking adjoints
it follows that $T:\ell^\infty_{1/w} \to \ell^\infty_{1/w}$ is weak*
continuous, so part (b) follows.
\end{proof}
\subsection{Invertibility of multipliers}
We will now prove the invertibility of $\multm$ on $\SEsp$.
We assume that $m \in L^\infty(\Gc)$ is real-valued and satisfies,
\begin{align*}
0 < A \leq m \leq B < \infty,
\end{align*}
for some constants $A,B$, and we will establish a number of claims
that will lead to the desired conclusion.
\begin{claim}
\label{claim_Mn_inv}
The operator $\multm: \Hsp \to \Hsp$ is invertible.
\end{claim}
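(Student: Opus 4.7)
The key is that Assumption (C2) makes $P_\Hsp$ the \emph{orthogonal} projection of $L^2(\Gc)$ onto $\Hsp$, and on $\Hsp$ the operator $\multm$ acts as $f \mapsto P_\Hsp(mf)$. This is a standard ``compressed multiplication'' situation, and for a real symbol bounded below by $A$ and above by $B$, the operator inherits positivity and norm bounds from $m$ itself.

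My plan is as follows. First, I would observe that $m \in L^\infty(\Gc)$ and $f \in \Hsp \subseteq L^2(\Gc)$ together yield $mf \in L^2(\Gc)$, so $P_\Hsp(mf)$ is well-defined; moreover, $\|\multm(f)\|_{L^2} \leq \|m\|_\infty \|f\|_{L^2} \leq B \|f\|_{L^2}$, so $\multm$ is bounded on $\Hsp$. Next, using that $P_\Hsp$ is the orthogonal projection (by (C2)) and that $m$ is real-valued, for $f, g \in \Hsp$ I compute
\begin{align*}
\ip{\multm(f)}{g} = \ip{P_\Hsp(mf)}{g} = \ip{mf}{P_\Hsp g} = \ip{mf}{g} = \ip{f}{mg} = \ip{f}{\multm(g)},
\end{align*}
so $\multm$ is self-adjoint on $\Hsp$.

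The crucial step is the lower bound. For $f \in \Hsp$, since $P_\Hsp f = f$,
\begin{align*}
\ip{\multm(f)}{f} = \ip{P_\Hsp(mf)}{f} = \ip{mf}{f} = \int_\Gc m(x)\,\abs{f(x)}^2\,dx \geq A \norm{f}_{L^2}^2.
\end{align*}
Thus $\multm$ is a bounded, self-adjoint, positive operator on $\Hsp$ bounded below by $A > 0$. By standard spectral theory (or directly: the bound $\ip{\multm(f)}{f} \geq A\|f\|^2$ forces $\|\multm(f)\|\|f\| \geq A\|f\|^2$, so $\multm$ is injective with closed range; self-adjointness then gives $\operatorname{ker}(\multm)^\perp = \overline{\operatorname{ran}(\multm)} = \operatorname{ran}(\multm) = \Hsp$), $\multm$ is invertible on $\Hsp$, with $\norm{\multm^{-1}}_{\Hsp \to \Hsp} \leq 1/A$.

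I do not anticipate any real obstacle here: the argument is a textbook ``positive operator bounded below is invertible'' once Assumption (C2) is invoked to identify $P_\Hsp$ as an orthogonal projection. The only point requiring a moment of care is checking that the computation of $\ip{\multm(f)}{f}$ is legitimate, which uses that $f \in \Hsp$ implies $P_\Hsp f = f$ and that $m$ is real so $(mf)\bar f = m|f|^2$.
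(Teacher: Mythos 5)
Your proposal is correct and follows essentially the same route as the paper: self-adjointness of $\multm$ on $\Hsp$ from Assumption (C2) (making $P_\Hsp$ the orthogonal projector) together with the real-valuedness of $m$, and the lower bound $\ip{\multm(f)}{f}=\ip{mf}{f}\geq A\norm{f}_{\Hsp}^2$, concluding that a self-adjoint operator bounded below is invertible. Your version merely spells out the self-adjointness computation and the closed-range argument that the paper leaves implicit.
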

\begin{proof}
Observe that, since $P_\Hsp:L^2(\Gc) \to \Hsp$ is the orthogonal projector,
and $m$ is real-valued, 
the operator $\multm: \Hsp \to \Hsp$ is self-adjoint. Moreover,
for $f \in \Hsp$,
\begin{align*}
\norm{\multm(f)}_\Hsp \norm{f}_\Hsp
&\geq \ip{P(mf)}{f}
= \ip{mf}{f}
\\
&= \int_{\Gc} m(x) \abs{f(x)}^2 dx
\geq A \norm{f}_\Hsp^2.
\end{align*}
Hence, $\multm:\Hsp \to \Hsp$ is self-adjoint and bounded below
and therefore invertible.
\end{proof}
\begin{rem}
Claim \ref{claim_Mn_inv} may not be true without the assumption that
$m$ is nonnegative. Indeed, if $\Gc=\Rst$, $\Lambda=\Zst$, 
$\varphi_\lambda=\psi_\lambda=\chi_{[\lambda,\lambda+1]}$ and
$m=\chi_{(-\infty,1/2)}-\chi_{[1/2,+\infty)]}$, then $\multm(\varphi_0)=0$.
\end{rem}
Let $L \in \bC^{\Lambda\times\Lambda}$ be the matrix representing
the operator $\rec_\Hsp^* \multm \rec_\Hsp: \ell^2(\Lambda) \to
\ell^2(\Lambda)$.
Hence, $L$ is given by,
\begin{align*}
L_{\lambda, \lambda'} := \ip{m \varphi_{\lambda'}}{\varphi_\lambda}.
\end{align*}

\begin{claim}
\label{claimLdag}
The matrix $L$ belongs to $CD_R(\Lambda, w)$
and has a Moore-Penrose pseudo-inverse $L^\dag$
that also belongs to $CD_R(\Lambda, w)$.
In addition, $(\multm)^{-1}: \Hsp \to \Hsp$ can be decomposed as
$(\multm)^{-1} = \rec_\Hsp L^\dagger \rec_\Hsp^*$.
\end{claim}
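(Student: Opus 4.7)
The plan is to prove the claim in three steps: (1) establish $L \in CD_R(\Lambda, w)$ by a direct envelope estimate, (2) show $L$ is self-adjoint with closed range so that Proposition \ref{prop_spec_cd} yields $L^\dag \in CD_R(\Lambda, w)$, and (3) verify the inversion formula on $\Hsp$ by a short algebraic manipulation.

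For Step (1), I would use the molecular bound $|\varphi_\lambda(x)| \leq h(\lambda^{-1}x)$ from (A2') together with $|m| \leq B$ to obtain
\begin{align*}
\abs{L_{\lambda, \lambda'}} \leq B \int_\Gc h(\lambda^{-1}x)\, h(\lambda'^{-1}x)\, dx.
\end{align*}
Since $\Gc$ is IN hence unimodular (Remark \ref{rem_lp_are_mod}), the substitution $y = \lambda^{-1}x$ rewrites the right-hand side as $a_{\lambda'^{-1}\lambda}$, where $a_\mu := B \ip{h}{L_{\mu^{-1}} h}$. To see that $a \in \ell^1_w(\Lambda)$, apply Lemma \ref{lemma_amfacts3} with $\Esp = L^1_w$ and $f = g = h$: both hypotheses $h \in W(L^\infty, L^1_w)$ and $h \in \wweak$ hold, since $h \in \wstrong$ and Proposition \ref{prop_weak_st_embeddings} provides the embeddings $\wstrong \hookrightarrow W(L^\infty, L^1_w) \cap W_R(L^\infty, L^1_w) \hookrightarrow \wweak$. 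The symmetry of $w$ (Remark \ref{rem_lp_are_mod}) handles the $\mu \leftrightarrow \mu^{-1}$ reindexing over the subgroup $\Lambda$. This yields $L \in CD_R(\Lambda, w)$.

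For Step (2), $L$ is self-adjoint because $m$ is real-valued, giving $\ip{m\varphi_{\lambda'}}{\varphi_\lambda} = \overline{\ip{m\varphi_\lambda}{\varphi_{\lambda'}}}$. A direct computation, using $\ip{P_\Hsp(m\varphi_{\lambda'})}{\varphi_\lambda} = \ip{m\varphi_{\lambda'}}{\varphi_\lambda}$ (valid because $\varphi_\lambda \in \Hsp$), identifies $L$ with $\rec_\Hsp^* \multm \rec_\Hsp$ as operators on $\ell^2(\Lambda)$. By Claim \ref{claim_atoms_frame}, $\rec_\Hsp$ maps onto $\Hsp$, and by Claim \ref{claim_Mn_inv}, $\multm$ is an automorphism of $\Hsp$; hence the range of $L$ coincides with $\rec_\Hsp^*(\Hsp)$. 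The frame lower bound also shows that $\rec_\Hsp^*|_\Hsp$ is bounded below, so this range is closed in $\ell^2(\Lambda)$. Proposition \ref{prop_spec_cd} then yields $L^\dag \in CD_R(\Lambda, w)$.

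For Step (3), fix $f \in \Hsp$ and set $g := \rec_\Hsp L^\dag \rec_\Hsp^* f \in \Hsp$. Since $\ker \rec_\Hsp^* = \Hsp^\perp$, the projection $P_\Hsp$ disappears under $\rec_\Hsp^*$, and combining with the matrix identity from Step (2) gives
\begin{align*}
\rec_\Hsp^*(\multm g) = \rec_\Hsp^*(mg) = L L^\dag \rec_\Hsp^* f.
\end{align*}
Because $LL^\dag$ is the orthogonal projection onto the range of $L = \rec_\Hsp^*(\Hsp)$ and this range contains $\rec_\Hsp^* f$, we deduce $\rec_\Hsp^*(\multm g - f) = 0$; since $\multm g - f \in \Hsp$ and $\Hsp \cap \Hsp^\perp = \{0\}$, this forces $\multm g = f$. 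Invoking invertibility of $\multm|_\Hsp$ (Claim \ref{claim_Mn_inv}) yields $(\multm)^{-1} = \rec_\Hsp L^\dag \rec_\Hsp^*$ on $\Hsp$. The main obstacle is Step (1): the pointwise envelope bound is routine, but summability against the weight $w$ requires the weak-amalgam machinery of Section \ref{sec_amalgams}, which is precisely why Assumption (A2') demands $h \in \wstrong$.
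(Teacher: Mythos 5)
Your proposal is correct and follows essentially the same route as the paper: the same envelope estimate $\abs{L_{\lambda,\lambda'}} \lesssim (h * \iv{h})(\lambda'^{-1}\lambda)$ with the amalgam machinery (your Lemma \ref{lemma_amfacts3} route is equivalent to the paper's use of Lemmas \ref{lemma_amfacts1} and \ref{lemma_amfacts2}) giving $a \in \ell^1_w$, the same closed-range argument combining the frame condition from Claim \ref{claim_atoms_frame} with the invertibility in Claim \ref{claim_Mn_inv}, and Proposition \ref{prop_spec_cd} for $L^\dag \in CD_R(\Lambda,w)$. The only, harmless, deviation is in the last step: the paper identifies $L^\dag$ explicitly as $\coef_\Hsp (\multm)^{-1} \coef_\Hsp^*$ via the canonical-dual-frame relations of (C2), whereas you verify $(\multm)^{-1} = \rec_\Hsp L^\dag \rec_\Hsp^*$ directly from the Moore--Penrose property that $LL^\dag$ is the orthogonal projection onto $\operatorname{ran} L = \rec_\Hsp^*(\Hsp)$, both arguments using (C2) through the orthogonality of $P_\Hsp$.
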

\begin{proof}
To see that $L \in CD_R(\Lambda, w)$ let us estimate,
\begin{align*}
\abs{L_{\lambda, \lambda'}}
&\lesssim
\int_\Gc h(\lambda^{-1} x) h(\lambda'^{-1} x) dx
= a_{\lambda'^{-1} \lambda},
\end{align*}
where $a_\lambda := h * \iv{h}(\lambda)$.
Using Lemmas \ref{lemma_amfacts1} and \ref{lemma_amfacts2}
we see that $a \in \ell^1_w$.

The operator $\rec_\Hsp$ has range $\Hsp$ because
$\sett{\varphi_\lambda}_\lambda$ is a frame for $\Hsp$ (cf. Claim
\ref{claim_atoms_frame}). Since $\multm: \Hsp \to \Hsp$ is invertible
by Claim \ref{claim_Mn_inv}, the range of
$L= \rec_\Hsp^* \multm \rec_\Hsp$ equals $\rec_\Hsp^*(\Hsp)$. This
subspace is closed because $\rec_\Hsp^*$ is bounded below on $\Hsp$
(that is the frame condition). Hence, $L$ has closed range and consequently
has a pseudo-inverse $L^\dag$.
Since $\multm$ is self-adjoint, so is $L$. In addition, $L^\dag$
is given by,
\begin{align*}
L^\dag &= \coef_\Hsp (\multm)^{-1} \coef_\Hsp^*.
\end{align*}
Hence, $(\multm)^{-1} = \rec_\Hsp L^\dagger \rec_\Hsp^*$,
(where the operator $\rec_\Hsp^*$ is restricted to $\Hsp$).
Finally, by Proposition \ref{prop_spec_cd},
$L^\dag \in CD_R(\Lambda, w)$.
\end{proof}
Now we can prove the invertibility of $\multm$ on $\SEsp$.
\begin{prop}
\label{prop_inv_mult}
Let $m \in L^\infty(\Gc)$ be real-valued and satisfy
$0 < A \leq m \leq B < \infty$,
for some constants $A,B$.
Then, the multiplier $\multm: \SEsp \to \SEsp$ is invertible.
\end{prop}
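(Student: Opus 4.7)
The plan is to construct a bounded two-sided inverse of $\multm$ on $\SEsp$ using the matrix $L^\dag$ produced in Claim \ref{claimLdag}, and then verify the inverse identities by reducing to the Hilbert-space case via a weak-density argument. Concretely, I define
\[
N : \SEsp \to \SEsp, \qquad N(f) := \rec\bigl(L^\dag(\recp(f))\bigr).
\]
Boundedness of $N$ is immediate: $\recp : \Esp \to \Espd$ and $\rec : \Espd \to \Esp$ are bounded by Remark \ref{rem_coefrec_bounded}, $L^\dag \in CD_R(\Lambda,w)$ maps $\Espd$ boundedly into $\Espd$ by Proposition \ref{prop_conv_dom}(a), and the image of $\rec$ lies in $\SEsp$ by (A2'). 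Thus $N$ is a well-defined bounded operator on $\SEsp$.

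Next I verify $N\multm f = f$ and $\multm N f = f$ for every $f$ in the linear span of the atoms $\sett{\varphi_\lambda}_\lambda$. Such an $f$ belongs to $\Hsp \cap \SEsp$, and since $m \in L^\infty$ one also has $mf \in L^2(\Gc) \cap \Esp$; consequently $P(mf)$ and $P_\Hsp(mf)$ are given by the same series $\sum_\lambda \ip{mf}{\psi_\lambda}\varphi_\lambda$, and similarly $\recp$ agrees with $\rec_\Hsp^*$ and $\rec$ with $\rec_\Hsp$ on their common domains. Hence $N\multm f = \rec_\Hsp L^\dag \rec_\Hsp^*(\multm|_\Hsp f) = (\multm|_\Hsp)^{-1}(\multm|_\Hsp f) = f$ by Claim \ref{claimLdag}, and the identity $\multm N f = f$ follows analogously.

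To extend these identities to all of $\SEsp$, I use a density argument in the topology $(\Esp, W(L^\infty,L^1_w))$. By Proposition \ref{prop_weak_cont}, the atomic expansion $f = \sum_\lambda \ip{f}{\psi_\lambda}\varphi_\lambda$ converges unconditionally in this topology for every $f \in \SEsp$, so finite linear combinations of atoms are dense in $\SEsp$. Each building block of $N$ is continuous in this topology: $\recp$ and $\rec$ by Proposition \ref{prop_weak_cont}, and $L^\dag$ by Proposition \ref{prop_conv_dom}(b). The operator $\multm = P \circ m$ is also continuous, since $P = \rec \circ \coef$ is weak-continuous by Proposition \ref{prop_weak_cont} and multiplication by $m \in L^\infty(\Gc)$ dually sends $W(L^\infty,L^1_w)$ into itself (because $(mg)^{\#} \leq \norm{m}_\infty g^{\#}$). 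Since the weak topology is Hausdorff, the identities $N\multm = \operatorname{id}_{\SEsp}$ and $\multm N = \operatorname{id}_{\SEsp}$ extend from the dense subset to all of $\SEsp$, and therefore $N = (\multm)^{-1}$.

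The main obstacle is concentrated in Claim \ref{claimLdag}, which depends on the spectral invariance of $CD_R(\Lambda,w) \hookrightarrow B(\ell^2(\Lambda))$ provided by Proposition \ref{prop_spec_cd}; this is precisely where the structural assumption (C1) on $\Gc$ and $\Lambda$ is used. Once this is in hand, the remainder of the argument is a careful bookkeeping exercise whose only delicate point is that norm convergence of the atomic decomposition is not assumed in $\Esp$, forcing one to work in the weaker topology $(\Esp, W(L^\infty,L^1_w))$ and to check weak continuity of every operator involved.
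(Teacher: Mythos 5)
Your proposal is correct and takes essentially the same route as the paper: you construct the same candidate inverse $\multn = \rec\, L^\dag\, \recp$, verify the two inverse identities on a dense subset of $\SEsp \cap \Hsp$ via Claim \ref{claimLdag}, and extend to all of $\SEsp$ by the identical weak-topology density argument based on Propositions \ref{prop_weak_cont} and \ref{prop_conv_dom}. Your explicit verification that multiplication by $m$ is continuous for the $(\Esp, W(L^\infty,L^1_w))$ topology (via the solidity estimate $(mg)^{\#} \leq \norm{m}_\infty g^{\#}$) spells out a detail the paper leaves implicit, but does not change the argument.
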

\begin{proof}
Let $\multn: \SEsp \to \SEsp$ be the operator defined by
$\multn := \rec L^\dag \recp$ (cf. Equation \eqref{eq_recp}).
It follows from 
Proposition \ref{prop_conv_dom} and Claim \ref{claimLdag} that
$\multn$ is bounded. Moreover, by Claim \ref{claimLdag},
for $f \in \SEsp \cap \Hsp$,
\begin{align}
\label{eqMN}
\multm \multn (f) = \multn \multm (f) = f.
\end{align}

By Propositions \ref{prop_weak_cont} and \ref{prop_conv_dom},
the operators $\multm$ and $\multn$ are continuous in the
$(\Esp, W(L^\infty, L^1_w))$ topology.
Since by Proposition \ref{prop_weak_cont}, any $f \in \SEsp$
can be approximated by a net of elements of $\SEsp \cap \Hsp$
in the $(\Esp, W(L^\infty, L^1_w))$ topology
(by considering the partial sums of the expansion
in Equation \eqref{at_desc}) it follows that Equation \eqref{eqMN} holds
for arbitrary $f \in \SEsp$. Hence, $\multm: \SEsp \to \SEsp$ is invertible.
\end{proof}
\subsection{Characterization of the atomic space with multipliers}
Finally we can derive the extension of Theorem
\ref{th_coverings_do_cover} to more general partitions of unity.
\begin{theorem}
\label{th_coverings_do_cover_2}
Suppose that Assumptions (A1), (A2'), (B1), (C1) and (C2) are satisfied.
Let $\sett{\theta_\gamma: \gamma \in \Gamma}$ be
given by $\theta_\gamma = m \eta_\gamma$,
where $0 < A \leq m \leq B < \infty$.

Then the operator,
\begin{align*}
\widetilde{\coefv}:\SEsp &\to \EspdB(\Gamma)
\\
f &\mapsto (P(f \theta_\gamma))_\gamma
\end{align*}
is left-invertible. Consequently, the following norm equivalence
holds for $f \in \SEsp$,
\begin{align*}
 \norm{f}_\Esp \approx
\norm{
({\norm{P(f\theta_\gamma)}_\Bsp})_\gamma
}_{\Espd}.
\end{align*}
\end{theorem}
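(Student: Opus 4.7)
The plan is to reduce Theorem \ref{th_coverings_do_cover_2} to the invertibility of the multiplier $\multm$ on $\SEsp$ (Proposition \ref{prop_inv_mult}) and the approximation result $\multmU \to \multm$ in operator norm (Theorem \ref{th_approx_mult}). The key observation is that the new analysis operator $\widetilde{\coefv}$ associated with the generalized partition of unity $\sett{\theta_\gamma}_\gamma$ can be rewritten using the analysis operator $\coefv$ of Section \ref{sec_desc_proy}. Indeed, since $\theta_\gamma = m\eta_\gamma$ and $P$ is linear,
\begin{align*}
\widetilde{\coefv}(f) = (P(f\theta_\gamma))_\gamma = (P((mf)\eta_\gamma))_\gamma = \coefv(mf),
\end{align*}
for every $f \in \SEsp$; the operator $\coefv$ is applied to $mf$, which lies in $W(L^\infty,\Esp)$ by the embedding $\SEsp \into W(L^\infty, \Esp)$ of Proposition \ref{prop_P_into_am} and by solidity, since $m \in L^\infty(\Gc)$.

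First I would handle the upper bound. Applying Proposition \ref{prop_vector_synt}(a) to $mf$ and using $\norm{mf}_{W(L^\infty,\Esp)} \leq \norm{m}_\infty \norm{f}_{W(L^\infty,\Esp)} \lesssim \norm{f}_\Esp$ yields
\begin{align*}
\norm{\widetilde{\coefv}(f)}_{\EspdB} = \norm{\coefv(mf)}_{\EspdB} \lesssim \norm{f}_\Esp.
\end{align*}
Next I would construct a left-inverse of $\widetilde{\coefv}$. Recall from Section \ref{sec_desc_proy} that $P_U = \recv_U \circ \coefv$. Composing $\widetilde{\coefv}$ with $\recv_U$ and then with $P$ gives
\begin{align*}
P \circ \recv_U \circ \widetilde{\coefv}(f) = P(\recv_U\coefv(mf)) = P(P_U(mf)) = \multmU(f),
\end{align*}
where $\multmU$ is the approximate multiplier from Section \ref{sec_approx_mult}.

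Now I would invoke the two key ingredients that have been established for the setting (A1), (A2'), (B1), (C1), (C2). By Proposition \ref{prop_inv_mult}, the multiplier $\multm: \SEsp \to \SEsp$ is invertible since $0 < A \leq m \leq B < \infty$. By Theorem \ref{th_approx_mult}, $\multmU \to \multm$ in operator norm as $U$ ranges over the relatively compact neighborhoods of the identity. Consequently, there exists some $U_0$ such that $\multmU$ is invertible on $\SEsp$. Fixing such a $U$, the composition $(\multmU)^{-1}\circ P \circ \recv_U$ is a bounded left-inverse of $\widetilde{\coefv}$, which immediately yields $\norm{f}_\Esp \lesssim \norm{\widetilde{\coefv}(f)}_{\EspdB}$ and the announced norm equivalence.

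All the genuine difficulty has already been absorbed in the earlier sections (the approximation of the projector $P$ by $P_U$, and the spectral-invariance-based proof of Proposition \ref{prop_inv_mult} via Assumptions (C1) and (C2)), so the remaining argument is essentially formal bookkeeping. The only subtlety worth emphasizing is that although $mf$ need not belong to $\SEsp$ for $f \in \SEsp$, the identity $\widetilde{\coefv}(f) = \coefv(mf)$ still makes sense because $\coefv$ is defined on all of $W(L^\infty,\Esp)$; this is exactly what allows the reduction to the multiplier framework of Sections \ref{sec_approx_mult} and \ref{sec_more_general}.
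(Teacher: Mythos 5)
Your proposal is correct and follows essentially the same route as the paper: rewrite $\widetilde{\coefv}(f)=\coefv(mf)$, factor $\multmU = P\recv_U\widetilde{\coefv}$ via $P_U=\recv_U\coefv$, invoke Proposition \ref{prop_inv_mult} for the invertibility of $\multm$ and Theorem \ref{th_approx_mult} to pick $U$ with $\multmU$ invertible, and conclude left-invertibility and the norm equivalence. Your explicit remark that $mf$ need not lie in $\SEsp$ but $\coefv$ is defined on all of $W(L^\infty,\Esp)$ is a point the paper leaves implicit, and is a welcome clarification rather than a deviation.
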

\begin{rem}
Any family $\sett{\theta_\gamma}_\gamma$ that
is enveloped by $g$ and whose sum is a real-valued function
that is bounded away from 0 and $\infty$, has the prescribed form
for an adequate choice of the partition of unity
$\sett{\eta_\gamma}_\gamma$
and the function $m$.
\end{rem}
\begin{rem}
As in Theorem \ref{th_coverings_do_cover},
the norm equivalence holds uniformly for any class of spaces
$\Esp$ having the same weight $w$ and the same constant $\consEw$ (cf.
Equation \eqref{weight_w_admissible}).
\end{rem}
\begin{proof}
First observe that $\widetilde{\coefv}(f)=\coefv(mf)$,
so $\widetilde{\coefv}$ is bounded on $\SEsp$ by Propositions
\ref{prop_P_into_am} and \ref{prop_vector_synt}.
By Proposition \ref{prop_inv_mult}, $\multm$ is invertible,
so by Theorem \ref{th_approx_mult}
we can choose a relatively compact neighborhood of the identity $U$
such that $\multmU$ is also invertible.
Since the operator $P_U$ (cf. Equation \eqref{deffPU})
can be factored as $P_U=\recv_U \coefv$, we have that,
$\multmU(f)=P \recv_U \coefv (mf) = P \recv_U \widetilde{\coefv}(f)$.
Since  $\multmU$ is invertible, $\widetilde{\coefv}$ is is left-invertible,
as claimed. This also implies the desired norm equivalence.
\end{proof}
\section{Applications}
\label{sec_applications}
\subsection{Coorbit spaces}
\label{sec_app_co}
We now briefly introduce coorbit theory (see \cite{fegr89}) and
show how Theorem \ref{th_coverings_do_cover} applies to this context.
Let $\pi$ be a (strongly continuous) unitary representation of $\Gc$ on a
Hilbert space
$\HH$. For a fixed $h \in \HH$, the abstract wavelet transform is defined as,
\begin{align*}
V_h f(x) := \ip{f}{\pi(x) h}, \quad
(f \in \HH, x \in \Gc).
\end{align*}
Let $w$ be an admissible weight on $\Gc$.
The main assumption in coorbit theory is the existence of a cyclic vector
$h \in \HH$ that is admissible in the sense that:
$V_h h \in W_R(L^\infty,L^1_w)$ and the reproducing formula,
\begin{align*}
V_h f = V_h f * V_h h,
\end{align*}
holds for all $f \in \HH$. (For a study about the validity
of the reproducing formula see \cite{fu05}).
Since $V_h h(x^{-1})=\overline{V_h h (x)}$, it follows
that $V_h h$ also belongs to $W(L^\infty,L^1_w)$.
As a consequence of the reproducing formula,
$V_h: \HH \to L^2(\Gc)$ is an isometry
and therefore has an inverse on its (closed) range.

Under these conditions the space $\HH_w^1$
is defined by,
\[
\HH_w^1 := \set{f \in \HH}{V_h f \in L^1_w},
\]
and endowed with the norm $\norm{f}_{\HH_w^1} := \norm{V_h f}_{L^1_w}$.
The anti-dual of $\HH_w^1$ (i.e. the space of continuous conjugate-linear
functionals) is denoted by $(\HH_w^1)^\urcorner$.
The inner product $\Hst \times \Hst \to \bC$ extends to a
sesquilinear form on $\HH_w^1 \times (\HH_w^1)^\urcorner \to \bC$.
Since $h$ is assumed to belong to $\HH_w^1$, the abstract wavelet
transform can be defined for $f \in (\HH_w^1)^\urcorner$.

Coorbit spaces are defined by selecting from the reservoir
$(\HH_w^1)^\urcorner$ those elements that satisfy a certain criteria.
Let $\Esp$ be a solid, translation invariant BF space such that $w$
is admissible for it. The coorbit space is defined by,
\[
\mathrm{Co} \Esp := \set{f \in (\HH_w^1)^\urcorner}{V_h f \in \Esp},
\]
and endowed with the norm $\norm{f}_{\mathrm{Co}\Esp} := \norm{V_h
f}_\Esp$.

Let $\SEsp = V_h (\mathrm{Co} \Esp)$. According to
\cite[Proposition 4.3]{fegr89},
$\SEsp$ is a closed subspace of $\Esp$ and moreover
$P(F):=F*V_hh$ defines a projector onto $\SEsp$. Hence,
if we let $H := \abs{V_h h}$, Assumptions (A1) and (A2) are verified.
When $\Esp$ is $L^2(\Gc)$, the operator $P$ is in fact the orthogonal
projector onto $\SEsp$.

In order to apply Theorem \ref{th_coverings_do_cover} to this setting,
let a partition of unity $\sett{\eta_\gamma}_\gamma$ and
a BF space $\Bsp$ satisfying (B1) be given. Let the operators
$M_\gamma:\mathrm{Co} \Esp \to \mathrm{Co} \Esp$ be defined as,
\begin{align*}
M_\gamma(f) := V_h^*(\eta_\gamma V_h(f)).
\end{align*}
Observe that, since $V_h:\HH \to L^2$ is an isometry,
$V_h^*$ is the projection onto the range of $V_h$ followed by the
inverse of $V_h$ on its range. Hence,
\begin{align*}
V_h M_\gamma(f) := M_{\eta_\gamma} V_h(f),
\end{align*}
where $M_{\eta_\gamma}: \SEsp \to \SEsp$ is the multiplier
form Section \ref{sec_approx_mult}. Now
Theorem \ref{th_coverings_do_cover} yields the following.
\begin{theorem}[Characterization of coorbit spaces]
\label{th_app_coorbit}
Let a partition of unity $\sett{\eta_\gamma}_\gamma$ and
a BF space $\Bsp$ satisfying (B1) be given. Then,
for $f \in \mathrm{Co} \Esp$, the following norm equivalence
holds,
\begin{align*}
\norm{f}_{\mathrm{Co} \Esp}
\approx
\norm{\sett{\norm{M_\gamma(f)}_{\mathrm{Co} \Bsp}}_\gamma}_{\Espd}.
\end{align*}
Moreover, the norm equivalence holds uniformly for any class of 
coorbit spaces $\mathrm{Co} \Esp$ having the same weight $w$ and
the same constant $\consEw$ (cf. Equation \eqref{weight_w_admissible}).

In addition, $f \in (\HH_w^1)^\urcorner$ belongs
to $\mathrm{Co} \Esp$ if and only if
$\sett{\norm{M_\gamma(f)}_{\mathrm{Co} \Bsp}}_\gamma \in \Espd(\Gamma)$.
\end{theorem}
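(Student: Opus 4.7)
The plan is to recognize the coorbit setting as an instance of the abstract model of Section \ref{sec_model}, verify Assumptions (A1), (A2), and (B1), apply Theorem \ref{th_coverings_do_cover} with atomic subspace $\SEsp := V_h(\mathrm{Co}\Esp)$, and translate the resulting norm equivalence back to $\mathrm{Co}\Esp$ via the isometry $V_h$.

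All the pieces needed to verify the assumptions are already present in the excerpt. (A1) holds because $\SEsp$ is a closed subspace of $\Esp$ and $P(F) := F * V_h h$ is a bounded projector onto it, as cited from \cite[Proposition 4.3]{fegr89}. For (A2), I would set $H := |V_h h|$: admissibility of $h$ places $V_h h$ in $W_R(L^\infty, L^1_w)$, and the identity $V_h h(x^{-1}) = \overline{V_h h(x)}$ together with the symmetry of $V$ puts it in $W(L^\infty, L^1_w)$ as well; the pointwise bound on $P$ is immediate from the convolution formula. Assumption (B1) is a hypothesis of the theorem.

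For $f \in \mathrm{Co}\Esp$, applying Theorem \ref{th_coverings_do_cover} to $V_h f \in \SEsp$ yields
\[
\|V_h f\|_\Esp \approx \|(\|P(\eta_\gamma V_h f)\|_\Bsp)_\gamma\|_{\Espd}.
\]
By the defining norm of $\mathrm{Co}\Esp$, $\|V_h f\|_\Esp = \|f\|_{\mathrm{Co}\Esp}$, while the intertwining $V_h M_\gamma(f) = P(\eta_\gamma V_h f) = M_{\eta_\gamma}(V_h f)$ recorded just before the theorem statement, combined with the definition of the $\mathrm{Co}\Bsp$-norm, gives $\|P(\eta_\gamma V_h f)\|_\Bsp = \|M_\gamma(f)\|_{\mathrm{Co}\Bsp}$. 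The uniformity over spaces sharing the same $w$ and $\consEw$ is inherited from the corresponding remark in Theorem \ref{th_coverings_do_cover}, since the remaining data ($V$, $\Gamma$, $g$, $H$, $\Bsp$) are independent of $\Esp$.

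The most delicate point is the \emph{if and only if} for $f \in (\HH_w^1)^\urcorner$. The forward direction is immediate. For the converse, $V_h f \in L^\infty_{1/w}$ makes each $P(\eta_\gamma V_h f)$ well-defined and still equal to $V_h M_\gamma(f)$ by extension of the intertwining, so the hypothesis places $(P(\eta_\gamma V_h f))_\gamma$ in $\EspdB$. Starting from the reproducing formula $V_h f = V_h f * V_h h = P(V_h f)$ (valid in the reservoir because $V_h h \in L^1_w$) together with the pointwise decomposition $V_h f = \sum_\gamma \eta_\gamma V_h f$, I would pass to $V_h f = \sum_\gamma P(\eta_\gamma V_h f)$ and then dominate $|V_h f|$ in $\Esp$ using Proposition \ref{prop_vector_synt}(b) applied to the synthesis operator $\recv_U$ for some fixed $U$. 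The hard part will be rigorously justifying these exchanges at the reservoir level, where the approximation machinery of Section \ref{sec_desc_proy} is only formulated on $W(L^\infty,\Esp)$; one needs to track convergences in the weaker $L^\infty_{1/w}$ topology afforded by Lemma \ref{lemma_amfacts25} and Proposition \ref{prop_P_into_am}(c).
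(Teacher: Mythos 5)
Your handling of the norm equivalence is precisely the paper's argument: verify (A1)--(A2) with $H:=\abs{V_h h}$ and $P(F):=F*V_h h$ as in \cite[Proposition 4.3]{fegr89}, apply Theorem \ref{th_coverings_do_cover}, and transfer via the isometry $V_h$ and the intertwining $V_h M_\gamma(f)=M_{\eta_\gamma}(V_h f)$; the uniformity claim is inherited exactly as you say. The gap is in your converse for the ``in addition'' part (which the paper dispatches as ``a standard approximation argument''). The mechanism you propose does not close. Applying $\recv_U$ to the sequence $(P(\eta_\gamma V_h f))_\gamma$ produces $\sum_\gamma P(\eta_\gamma V_h f)\chi_{\gamma U}=P_U(V_h f)$, which is \emph{not} $V_h f$, and Proposition \ref{prop_vector_synt}(b) bounds only this cut-off sum. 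The unlocalized identity $V_h f=\sum_\gamma P(\eta_\gamma V_h f)$ yields no $\Esp$-bound: the hypothesis controls $\sum_\gamma \norm{P(\eta_\gamma V_h f)}_\Bsp\,\chi_{\gamma V}$ in $\Esp$, and each piece $P(\eta_\gamma V_h f)$ is globally supported, so removing the indicators destroys all spatial localization --- the available pointwise bound $\abs{P(\eta_\gamma V_h f)}\lesssim \norm{P(\eta_\gamma V_h f)}_\Bsp\, w$ summed over $\gamma$ is useless unless the scalar sequence is in $\ell^1$, which membership in $\Espd$ does not give. So the real difficulty is not, as you locate it, the interchange of limits at the reservoir level; it is that synthesis \emph{without} the cutoffs $\chi_{\gamma U}$ is simply unbounded, and one must instead go through invertibility of $\multoneU$.

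The standard repair runs as follows. The space $\Esp':=L^\infty_{1/w}$ satisfies (A1) with the same weight $w$, its atomic subspace is $V_h\big((\HH_w^1)^\urcorner\big)$, and $V_h f\in L^\infty_{1/w}=W(L^\infty,L^\infty_{1/w})$. By the uniformity remark after Theorem \ref{th_approx}, choose one $U$ for which $\norm{P-P_U}$ is small simultaneously on $W(L^\infty,\Esp)$ and on $W(L^\infty,L^\infty_{1/w})$; then on both atomic subspaces $\multoneU=I-P(P-P_U)$ is invertible, with $\multoneU^{-1}$ given by the \emph{same} Neumann series $\sum_k \bigl(P(P-P_U)\bigr)^k$, convergent in both operator norms. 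Your hypothesis says exactly that $\coefv(V_h f)=(P(\eta_\gamma V_h f))_\gamma\in\EspdB$, so $G:=P\recv_U\coefv(V_h f)=\multoneU(V_h f)$ lies in ${\SEsp}_{\Esp}$ by Proposition \ref{prop_vector_synt}(b) and Proposition \ref{prop_P_into_am}. Since ${\SEsp}_{\Esp}\into L^\infty_{1/w}$ continuously (Proposition \ref{prop_P_into_am}(b) together with Lemma \ref{lemma_amfacts25}(a)), the Neumann series applied to $G$ converges in both topologies and the two limits coincide; as the $L^\infty_{1/w}$-limit is $V_h f$, we conclude $V_h f=\multoneU^{-1}(G)\in\Esp$, i.e.\ $f\in\mathrm{Co}\,\Esp$. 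Your preliminary observations for the converse (that $P(\eta_\gamma V_h f)$ is well defined on the reservoir and still equals $V_h M_\gamma(f)$, and that the pointwise decomposition is legitimate by dominated convergence against $H$) are correct and are indeed the right entry point --- but they must feed into this invertibility-and-consistency argument, not into a direct domination.
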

\begin{rem}
One possible choice for $\Bsp$ is $L^2(\Gc)$ yielding
$\mathrm{Co} \Bsp = \HH$ (cf. \cite[Corollary 4.4]{fegr89}).
\end{rem}
\begin{proof}
The norm equivalence follows directly from Theorem
\ref{th_coverings_do_cover} and the fact that
$V_h: \mathrm{Co} \Esp \to \SEsp$ is an isometry. The ``in addition''
part follows from a standard approximation argument.
\end{proof}
\subsection{Time-Scale decompositions}
We now consider the affine group
$\Gc = \Rdst \times (0,+\infty)$,
where multiplication is given by $(x,s)\cdot(x',s') = (x+sx',ss')$.
Haar measure has density $dx \frac{ds}{s^{d+1}}$ and the modular
function is given by $\Delta(x,s)=s^{-d}$.
The affine group acts on $L^2(\Rdst)$ by translations and dilations,
\[
\pi(x,s) f(y)=s^{-d/2}f\left(\frac{y-x}{s}\right).
\]
The Wavelet transform associated with $\pi$ is,
\[
W_h f(x,s) = s^{-d/2}\int_{\Rdst} f(t)
\overline{h \left(\frac{t-x}{s}\right)}dt,
\]
for $f,h \in L^2(\Rdst)$,
whereas the inverse wavelet transform is given by,
\[
W_h^*F(x) =
\int_0^{+\infty} \int_\Rdst
F(y,s) 
\overline{h \left(\frac{x-y}{s}\right)} dx \frac{ds}{s^{\frac{3}{2}d+1}},
\]
for $F \in L^2(\Gc)$.\footnote{The integral converges in the weak-sense.
The possibility of evaluating it pointwise requires further
hypothesis.}

The wavelet multiplier with symbol $m \in L^\infty(\Gc)$ is given by,
\begin{align}
\label{eq_wavelet_mult}
\mathrm{WM}_m f(x) =
W_h^*(mW_hF),
\end{align}
for $f \in L^2(\Rdst)$.

The class of coorbit spaces for $\pi$ contains a large range of the
classical function spaces (see \cite{gr91}) including
the Besov and Triebel-Lizorkin spaces. We illustrate Theorem
\ref{th_app_coorbit} for homogeneous Besov spaces.
For $1 \leq p,q \leq +\infty$ and $\sigma \in \Rst$,
the homogeneous Besov space  $\dot{B}_{pq}^{\sigma}(\mathbb{R}^d)$ is
the set of all tempered distributions (modulo polynomials)
$f\in \mathcal{S}'/ \mathcal{P}(\Rdst)$ such that
\begin{align*}
\norm{f}_{\dot{B}_{pq}^{\sigma}}
:=
\left(
\sum_{j\in \Zst}2^{j\sigma q}
\norm{\mathcal{F}^{-1} (\varphi_j\mathcal{F}(f))}_{L^p}
\big\|^q
\right)^{1/q}
\end{align*}
is finite (with the usual modification for $q=\infty$),
where $\mathcal{F}$ is the Fourier transform and
$\sett{\varphi_j}_j$ is an adequate Schwartz class
partition of unity subordinated to dyadic crowns.
It is also usual to present these spaces in terms
of integrability of moduli of continuity rather than
frequency truncations. See \cite{tr83} for details.

One of Triebel's characterization of Besov spaces \cite{tr88} 
(see also \cite{gr91}) implies that\footnote{Triebel's result
implies that
$\norm{f}_{\dot{B}_{pq}^{\sigma}} \approx 
\norm{W_h f}_{L^{p,q}_{\sigma+d/2-d/q}}$ for an adequate
window function $h$. In \cite{fegr89} it is shown
that all admissible windows $h$ induce equivalent norms
in the coorbit space.}
$\dot{B}_{pq}^{\sigma}(\Rdst)=\mathrm{Co}(L^{p,q}_{\sigma+d/2-d/q}(\Gc))$,
where,
\[
\norm{F}_{L^{p,q}_\sigma}
=\left(\int_{0}^{+\infty}
\left(\int_\Rdst |F(x,s)|^p
dx\right)^{q/p}s^{-\sigma
q}\frac{ds}{s^{d+1}}\right)^{1/q}.
\]
As shown in
\cite[Section 4.2]{grpi09} the admissibility of the window $h$
is implied by the classical ``smooth molecule'' conditions
involving decay of derivatives and vanishing moments
(see \cite{frja85,frja90, frjawe91}). For example, any radial Schwartz
function $h$ with all moments vanishing is adequate.\footnote{To satisfy the
general
assumptions of Section \ref{sec_model} we can use the weight
$w(x,s) := \max\sett{s^{-\sigma}, \Delta(x,s)^{-1}s^{\sigma}}
= \max\sett{s^{-\sigma}, s^{d+\sigma}}$.}

In order to illustrate Theorem \ref{th_app_coorbit}, we consider a
covering of $\Rdst \times (0,+\infty)$
of the form,
\begin{align}
\label{eq_affine_covering}
U_{k,j} := 2^j((-1,1)^d+k) \times (2^{j-1},2^{j+1}),
\qquad (k \in \Zdst, j \in \Zst),
\end{align}
and let $\sett{\eta_{k,j}}_{k,j}$ be a (measurable) partition of
unity subordinated to it. The discrete norm of a sequence $\set{c_{k,j}}{k
\in \Zdst, j \in \Zst}$ associated with the space $L^{p,q}_\sigma$ and the
covering in Equation \eqref{eq_affine_covering} is
(see for example \cite{gr91, ul10}),
\begin{align*}
\norm{c}_{(L^{p,q}_\sigma)_d}
\approx
\left(
\sum_{j \in \Zst} 
2^{-jq(\sigma + d/q - d/p)}
\left(
\sum_{k \in \Zdst} \abs{c_{k,j}}^p 
\right)^{q/p}
\right)^{1/q}.
\end{align*}

We now obtain the following result.

\begin{theorem}
The quantity,
\begin{align*}
\left(
\sum_{j \in \Zst}2^{-j\sigma' q}
\left(\sum_{k \in \Zdst}
\norm{\mathrm{WM}_{\eta_{k,j}} f}_{L^2}^p \right)^{q/p}
\right)^{1/q},
\end{align*}
where $\sigma' := \sigma+d/2-d/p$,
is an equivalent norm on $\dot{B}_{pq}^{\sigma}$
(with the usual modifications when $p$ or $q$ are $\infty$).
\end{theorem}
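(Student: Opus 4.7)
The plan is to derive the statement as an application of Theorem \ref{th_app_coorbit} to the wavelet coorbit setting on $\Gc = \Rdst \times (0,+\infty)$, with environment $\Esp = L^{p,q}_{\sigma+d/2-d/q}(\Gc)$ and local space $\Bsp = L^2(\Gc)$. By Triebel's characterization recalled in the excerpt, $\dot{B}_{pq}^\sigma(\Rdst) = \mathrm{Co}(L^{p,q}_{\sigma+d/2-d/q})$ for any admissible radial Schwartz window $h$ with all moments vanishing, and $\mathrm{Co}(L^2(\Gc)) = \HH = L^2(\Rdst)$, so the quantity in the statement is precisely the right-hand side of Theorem \ref{th_app_coorbit}, provided Assumption (B1) is verified for the partition $\sett{\eta_{k,j}}$.

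I would first verify (B1). Set $\gamma_{k,j} := (2^j k, 2^j)$ and $V_0 := (-1,1)^d \times (1/2, 2)$; a direct calculation in the affine group gives $U_{k,j} = \gamma_{k,j} V_0$, so that $|\eta_{k,j}(x)| \leq \chi_{V_0}(\gamma_{k,j}^{-1} x)$. The index set $\Gamma := \sett{\gamma_{k,j}}$ is relatively separated (only finitely many scales $j$ can interact with a fixed compact set, and only finitely many $k$ at each such scale), and the compactly supported envelope $\chi_{V_0}$ lies in $\wweak$. Together with the bounded overlap of $\sett{U_{k,j}}$, this shows that $\sett{\eta_{k,j}}$ is a bounded partition of unity by $\wweak$-molecules. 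For $\Bsp = L^2(\Gc)$, left translations are isometries with respect to left Haar measure, and the embedding $W(L^\infty, L^1_w) \hookrightarrow L^2(\Gc)$ follows from the pointwise inequality $\int |f|^2 \leq \|f\|_{L^\infty_{1/w}} \|f\|_{L^1_w}$ combined with the inclusions $W(L^\infty, L^1_w) \hookrightarrow L^\infty_{1/w} \cap L^1_w$ supplied by Lemma \ref{lemma_amfacts25} and Proposition \ref{prop_weak_st_embeddings}.

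Since $V_h = W_h$ and $V_h^* = W_h^*$, the abstract multiplier coincides with the wavelet multiplier $\mathrm{WM}_{\eta_{k,j}}$ of Equation \eqref{eq_wavelet_mult}, with $\|M_{k,j}(f)\|_{\mathrm{Co}\Bsp} = \|\mathrm{WM}_{\eta_{k,j}}(f)\|_{L^2(\Rdst)}$. Theorem \ref{th_app_coorbit} then gives
\[
\|f\|_{\dot{B}_{pq}^\sigma} \approx \Bigl\|\bigl(\|\mathrm{WM}_{\eta_{k,j}}(f)\|_{L^2}\bigr)_{k,j}\Bigr\|_{(L^{p,q}_{\sigma+d/2-d/q})_d}.
\]
Inserting the exponent $\sigma + d/2 - d/q$ into the explicit discrete-norm formula quoted earlier in the excerpt produces the factor $2^{-jq((\sigma+d/2-d/q)+d/q-d/p)} = 2^{-jq\sigma'}$, matching the stated expression. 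The only genuine bookkeeping concerns the choice of weight: as noted in the footnote, $w(x,s) = \max\sett{s^{-\sigma}, s^{d+\sigma}}$ is admissible for the whole class of spaces in play, and the coorbit hypothesis $V_h h \in W_R(L^\infty, L^1_w) \cap W(L^\infty, L^1_w)$ reduces to the standard smooth-molecule estimates for the chosen window, so no new analytic obstacle arises beyond these verifications.
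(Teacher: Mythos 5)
Your proposal is correct and follows exactly the route the paper intends: the theorem is stated there as an instance of Theorem \ref{th_app_coorbit} applied to the affine group with $\Esp = L^{p,q}_{\sigma+d/2-d/q}(\Gc)$, $\Bsp = L^2(\Gc)$ (so $\mathrm{Co}\,\Bsp = L^2(\Rdst)$), the covering $U_{k,j}$ of Equation \eqref{eq_affine_covering}, Triebel's coorbit identification of $\dot{B}^{\sigma}_{pq}$, and the quoted discrete-norm formula yielding the exponent $\sigma'$. Your verification of (B1) --- writing $U_{k,j} = \gamma_{k,j}V_0$ with $\gamma_{k,j} = (2^jk,2^j)$, checking relative separation of $\Gamma$, the envelope $\chi_{V_0} \in \wweak$, and the embedding $W(L^\infty,L^1_w) \hookrightarrow L^2(\Gc)$ via Lemma \ref{lemma_amfacts25} --- correctly fills in the details the paper leaves implicit, with no gap.
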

\begin{rem}
Observe that Theorem \ref{th_app_coorbit}
also allows for non-compactly supported partitions of
unity, as long as its members are enveloped by a well-concentrated function.
Also observe that in the norm equivalence above we
can measure the norms of $\mathrm{WM}_{\eta_{k,j}} f$
in other Besov spaces besides $L^2$.
\end{rem}
\subsection{Time-Frequency decompositions}
\label{sec_app_gab}
For $f,h \in L^2(\Rdst)$, the
\emph{Short-Time Fourier Transform} (STFT)
(or \emph{windowed Fourier Transform}) is defined by,
\begin{align*}
\stft_h f(x,\varsigma) = \int_\Rdst f(y) e^{-2\pi i \varsigma y}
\overline{h(y-x)} dy.
\end{align*}
The translation and modulation operators are given by
$T_x f (y):=f(y-x)$ and $M_\varsigma f (y):=e^{2\pi i \varsigma y}f(y)$,
so that,
\begin{align}
\label{eq_def_stft}
\stft_h f(x,\varsigma) := \ip{f}{M_\varsigma T_x h}.
\end{align}
If $h$ is suitably normalized, $\stft_h:L^2(\Rdst) \to L^2(\Rtdst)$
is an isometry. The adjoint (inverse) STFT is given by,
\begin{align*}
\stft_h^*F(x) = \int_\Rtdst F(y,\varsigma) M_\varsigma T_y h(x) dyd\varsigma,
\end{align*}
so the localization operator with symbol $m \in L^\infty(\Rtdst)$
is given by,
\begin{align*}
\mathrm{H}_m f (x) &= \stft_h^* ( m \stft_h f)(x)
= \int_\Rtdst m(y,\varsigma) \stft_h f(y,\varsigma)
M_\varsigma T_y h(x) dyd\varsigma.
\end{align*}
If $h$ belongs to the Schwartz class, the definition in Equation
\eqref{eq_def_stft} extends to tempered distributions. Modulation
spaces are then defined by imposing integrability conditions of
the STFT. Let $w: \Rtdst \to (0,+\infty)$ be a submultiplicative, even
weight that satisfies the GRS condition: $\lim_{n \rightarrow \infty}
w(nx)^{1/n}=1$, for all $x \in \Rtdst$. Let $v: \Rtdst \to (0,+\infty)$
be a $w$-moderated weight; that is: $v(x+y) \lesssim w(x) v(y)$, for all
$x,y \in \Rtdst$. Assume further that $v$ is moderated by a polynomial
weight\footnote{This assumption is only made in order to define modulation
spaces as subsets of the class of tempered distributions. For a general weight,
the space $M^{p,q}_v$ has to be constructed as an abstract coorbit space.}.
For $1 \leq p,q \leq +\infty$, the
modulation space 
$M^{p,q}_v$ is defined as,
\begin{align*}
M^{p,q}_v := \set{f \in \mathcal{S}'(\Rdst)}{\stft_h f \in
L^{p,q}_v(\mathbb{R}^{2d})}
\end{align*}
where,
\begin{align*}
\norm{F}_{L^{p,q}_v}
=\left(
\int_\Rdst
\left(
\int_\Rdst |F(x,\varsigma)|^p v(x,\varsigma)^p
dx
\right)^{q/p}
d\varsigma
\right)^{1/q},
\end{align*}
with the usual modifications when $p$ or $q$ are $+\infty$.
$M^{p,q}_v$ is of course given the norm
$\norm{f}_{M^{p,q}_v} = \norm{\stft_h f}_{L^{p,q}_v}$.
For more details on the STFT and modulations spaces see \cite{gr01}.

After some normalizations and identifications,
modulation spaces can be regarded as coorbit spaces
of the Schr\"odinger representation of the Heisenberg group.
We chose however to consider them in the context of Section
\ref{sec_setting_atdesc}. For $h \in M^{1,1}_w$, $1 \leq p,q \leq \infty$,
and $w,v$ as above, we let $\Gc$ be $\Rdst \times \Rdst$,
$\Esp := L^{p,q}_v(\Gc)$ and $\SEsp := \stft_h (M^{p,q}_v)$.

For an adequate lattice\footnote{By
a lattice, we mean a full-rank lattice; i.e, a set of the form
$\Lambda=A \Zst^{2d}$, where $A$ is an invertible matrix.}
$\Lambda \subseteq \Rtdst$ the system
$\set{M_\varsigma T_x h}{(x,\varsigma) \in \Lambda}$ gives rise
to an atomic decomposition of $M^{p,q}_v$.
Moreover, on $M^2=M^{2,2}_1$
the dual atoms consist of the Hilbert-space dual frame of
$\set{M_\varsigma T_x h}{(x,\varsigma) \in \Lambda}$
and are of the form
$\set{M_\varsigma T_x \tilde{h}}{(x,\varsigma) \in \Lambda}$
for some function $\tilde{h} \in M^{1,1}_w$
(see \cite{fegr97,gr01}).
Hence, if we define $\varphi_{(x,\varsigma)} :=
\stft_h (M_\varsigma T_x h)$ and
$\psi_{(x,\varsigma)} := \stft_h (M_\varsigma T_x \tilde{h})$, the atoms
$\set{\varphi_\lambda}{\lambda \in \Lambda}$ and
dual atoms $\set{\psi_\lambda}{\lambda \in \Lambda}$ provide an atomic
decomposition for $\SEsp$.

Since $\Gc$ is abelian, left and right amalgam spaces coincide.
The envelopes for the atoms and dual atoms
are the functions $\abs{\stft_h h}$ and $\abs{\stft_h \tilde{h}}$.\footnote{
For simplicity Assumption (A2') requires the same envelope
for both the atoms and the dual atoms, but clearly if they
have different envelopes then their sum serves as a common envelope.}
These functions indeed envelope the atoms because of the 
straightforward relation:
$\abs{\stft_h (M_\varsigma T_x f)} = \abs{\stft_h f(\cdot - (x,\varsigma))}$
(see \cite[Equation 3.14]{gr01}).
The fact that $h$ and $\tilde{h}$ belong to $M^{1,1}_w$ means
that $V_h h$ and $V_h \tilde{h}$ belong to $L^1_w$, but it is well-know
that in this case they also belong to $W(L^\infty,L^1_w)$
(see \cite[Proposition 12.1.11]{gr01}). This fact can also be derived
from the norm equivalence in Proposition \ref{prop_P_into_am}.

Let us now consider a family of functions $\set{\theta_\gamma}{\gamma \in
\Gamma}$ that satisfy
\begin{align*}
0 < A \leq \sum_\gamma \theta_\gamma \leq B < \infty.
\end{align*}
Let us also assume that $\Gamma$ is a relatively separated
subset of $\Rtdst$ and that there exists a function $g \in L^1_w(\Rtdst)$
such that $\abs{\theta_\gamma(x)} \leq g(x-\gamma)$, for all $x \in \Rtdst$
and $\gamma \in \Gamma$. We will let the space $\Bsp$ that measures
the localized pieces be an unweighted Lebesgue space $L^{r,s}$.
We are then in the situation of Section \ref{sec_more_general}
(remember that, since $\Gc$ is abelian, $L^1_w = \wweak$ -
cf. Proposition \ref{prop_weak_st_embeddings}).

To illustrate Theorem \ref{th_coverings_do_cover_2}
more clearly we further assume that $\Gamma = \Gamma_1 \times \Gamma_2$
for two relatively separated sets $\Gamma_1, \Gamma_2 \subseteq \Rdst$.
Then we get the following.
\begin{theorem}
\label{th_app_gabor}
For all $1 \leq s,t \leq \infty$, the quantity,
\begin{align*}
\left(
\sum_{\gamma_2 \in \Gamma_2}
\left(
\sum_{\gamma_1 \in \Gamma_1}
\norm{\mathrm{H}_{\theta_{(\gamma_1,\gamma_2)}} f}_{M^{s,t}}^p 
v(\gamma_1,\gamma_2)^p \right)^{q/p}
\right)^{1/q},
\end{align*}
is an equivalent norm on $M^{p,q}_v$
(with the usual modifications when $p$ or $q$ are $\infty$).
\end{theorem}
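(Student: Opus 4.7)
The plan is to verify Assumptions (A1), (A2$'$), (B1), (C1) and (C2) in the present setting, invoke Theorem \ref{th_coverings_do_cover_2}, and then translate the resulting norm equivalence on $\SEsp$ back to $M^{p,q}_v$ using that $\stft_h$ is an isometry between $M^{p,q}_v$ and $\SEsp = \stft_h(M^{p,q}_v)$, finally unpacking the discrete norm of $(L^{p,q}_v)_d$ on the product index set $\Gamma_1 \times \Gamma_2$.

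The structural verifications are largely assembled from the discussion preceding the statement. For (A1) and (A2$'$): $\Esp = L^{p,q}_v$ is a solid translation-invariant BF space with an admissible modulation-space weight, $\SEsp$ is closed and complemented, and the Gabor atoms $\varphi_\lambda = \stft_h(M_\varsigma T_x h)$ and dual atoms $\psi_\lambda = \stft_h(M_\varsigma T_x \tilde h)$ are enveloped by $|\stft_h h| + |\stft_h \tilde h|$, which lies in $W(L^\infty, L^1_w)$ because $h, \tilde h \in M^{1,1}_w$ (\cite[Prop.\ 12.1.11]{gr01}); since $\Gc = \Rtdst$ is abelian hence IN, Proposition \ref{prop_weak_st_embeddings}(d) identifies $W(L^\infty, L^1_w)$ with $\wstrong$, so the envelope has the required regularity. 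For (B1), setting $m := \sum_\gamma \theta_\gamma$ and $\eta_\gamma := \theta_\gamma/m$ produces a partition of unity enveloped by $A^{-1}g \in L^1_w = \wweak$ (Proposition \ref{prop_weak_st_embeddings}(b)); the space $\Bsp = L^{s,t}$ is solid and isometrically translation-invariant, and the embedding $W(L^\infty, L^1_w) \hookrightarrow L^{s,t}$ follows from the unit-cube bound $\norm{f}_{L^{s,t}(Q)} \lesssim \norm{f}_{L^\infty(Q)}$ together with $\ell^1 \hookrightarrow \ell^{\min(s,t)}$. For (C1), $\Lambda \cong \Zst^{2d}$ is finitely generated of polynomial growth and the GRS condition on $w$ is standard for modulation-space weights, so Proposition \ref{prop_suf_c1} (or Remark \ref{rem_cd_zd}) applies. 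For (C2), I appeal to the fact from Gabor analysis that $\tilde h$ is the canonical dual window of the frame $\sett{M_\varsigma T_x h}_{(x,\varsigma) \in \Lambda}$ in $L^2(\Rdst)$, and transport this through the isometry $\stft_h$ to conclude that $\sett{\psi_\lambda}$ is the canonical dual frame of $\sett{\varphi_\lambda}$ in $\Hsp = \stft_h(L^2(\Rdst))$.

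With the hypotheses in place, Theorem \ref{th_coverings_do_cover_2} applied to $F := \stft_h f$ gives
\begin{align*}
\norm{f}_{M^{p,q}_v}
= \norm{\stft_h f}_{L^{p,q}_v}
\approx \Bigl\| \bigl(\norm{P(\stft_h f \cdot \theta_\gamma)}_{L^{s,t}}\bigr)_{\gamma \in \Gamma}\Bigr\|_{(L^{p,q}_v)_d}.
\end{align*}
The restriction of $P$ to $L^2(\Gc)$ coincides with $\stft_h \stft_h^*$, the orthogonal projection onto $\Hsp$, so $P(\stft_h f\cdot \theta_\gamma) = \stft_h(\mathrm{H}_{\theta_\gamma} f)$ and consequently $\norm{P(\stft_h f\cdot \theta_\gamma)}_{L^{s,t}} = \norm{\mathrm{H}_{\theta_\gamma} f}_{M^{s,t}}$. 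To identify the outer discrete norm with the expression in the statement I will take $V = V_1 \times V_2$ with $V_i$ a neighborhood of the origin in $\Rdst$ and use the relative separation of $\Gamma_1$ and $\Gamma_2$ together with the $w$-moderacy of $v$ (giving $v(x,\varsigma) \approx v(\gamma_1, \gamma_2)$ on $(\gamma_1,\gamma_2) + V$) to show, by a direct amalgam computation, that the $L^{p,q}_v$-norm of $\sum_\gamma c_\gamma \chi_{\gamma + V}$ is equivalent to the iterated weighted $\ell^p$-$\ell^q$ expression displayed. The main obstacle is the verification of (C2), which is external to the abstract machinery of the paper: it relies on the nontrivial Gabor-analytic fact that when $h \in M^{1,1}_w$ and $\Lambda$ is sufficiently dense the canonical dual window $\tilde h$ again lies in $M^{1,1}_w$ (\cite{gr01, fegr97}); the remaining ingredients are either direct or standard amalgam-space bookkeeping.
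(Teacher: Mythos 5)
Your proposal is correct and follows essentially the same route as the paper: the paper proves Theorem \ref{th_app_gabor} precisely by verifying Assumptions (A1), (A2'), (B1), (C1) and (C2) in the discussion preceding the statement (Gabor atoms enveloped by $\abs{\stft_h h}+\abs{\stft_h \tilde{h}}$ with $W(L^\infty,L^1_w)=\wstrong$ on the abelian group, $\theta_\gamma = m\eta_\gamma$ with $m = \sum_\gamma \theta_\gamma$, the canonical dual window fact for (C2), and Remark \ref{rem_cd_zd} covering the weight conditions in (C1)) and then invoking Theorem \ref{th_coverings_do_cover_2} together with the isometry $\stft_h$ and the identification of the discrete mixed-norm. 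Your added details --- the embedding $W(L^\infty,L^1_w)\hookrightarrow L^{s,t}$, the transport of the canonical dual frame through the unitary $\stft_h$, and the explicit unpacking of $(L^{p,q}_v)_d$ on $\Gamma_1\times\Gamma_2$ --- merely fill in steps the paper leaves implicit, at the same level of rigor.
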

This generalizes the main result in \cite{dogr09} in two directions.
The results in \cite{dogr09} apply only to partitions of unity produced
by lattice translations of a single function, whereas Theorem
\ref{th_app_gabor} allows for irregular partitions.
Secondly, in \cite{dogr09} the space measuring the localized pieces is
restricted to be $L^2$. In contrast, in Theorem \ref{th_app_gabor} it is
possible to measure the localized pieces using the whole range of unweighted
modulations spaces.

The proof in \cite{dogr09} resorts to techniques from rotation algebras and
spectral theory to construct an atomic decomposition that is simultaneously
adapted to all the localization operators
$\set{\mathrm{H}_{\theta_\gamma}}{\gamma \in \Gamma}$.
Part of our motivation came from
the observation that such an atomic decomposition could be obtained in
a more constructive manner by using the technique of \emph{frame surgery},
recently introduced in \cite{ro10}.
\subsection{Shearlet spaces}
Theorem \ref{th_coverings_do_cover} can also be applied to the
recently introduced shearlet coorbit spaces \cite{dakustte09}.
For $a \in \Rst^* := \Rst \setminus \sett{0}$ and $s \in \Rst$, the
\emph{parabolic scaling} $A_a$ and the \emph{shear} $S_s$ are defined by,
\begin{align*}
A_a := \left[
\begin{array}{cc}
a & 0\\
0& \textit{sgn}(a)\sqrt{\abs{a}}
\end{array}
\right],
\qquad
S_s :=
\left[
\begin{array}{cc}
1 & s\\
0 & 1
\end{array}
\right].
\end{align*}
The shearlet group is the set $\Gc := \Rst^* \times \Rst \times \Rst^2$,
together with the operation,
\begin{align*}
(a,s,t)\cdot(a',s',t') := (aa',s+\sqrt{\abs{a}}s',t+S_sA_at').
\end{align*}
The shearlet group acts on $L^2(\Rst^2)$ by,
\begin{align*}
\pi(a,s,t) f(x) := \abs{a}^{-3/4}f(A_a^{-1}S_s^{-1}(x-t)). 
\end{align*}
In \cite{dakustte09} it is proved that any Schwarz function $h$
with Fourier transform supported on a compact subset of
$\Rst^*\times\Rst$ is an admissible window. The corresponding
wavelet transform,
\begin{align*}
SH_h f (a,s,t) := \ip{f}{\pi(a,s,t)h},
\end{align*}
is called the \emph{continuous shearlet transform}. 
Using Theorem \ref{th_coverings_do_cover}, shearlet coorbit spaces
can be described in terms of multipliers of the continuous shearlet
transform. See \cite{dakustte09} for the relevant explicit calculations
on the shearlet group (e.g. description of relatively separated sets).
\subsection{Localized frames}
Let us briefly point out that Theorem \ref{th_coverings_do_cover_2}
also applies to coorbit spaces of localized frames
(see \cite{gr04-1, fogr05, bacahela06}). If $\Hsp$ is a Hilbert space and
$\mathcal{F}=\sett{f_k}_{k \in \Zdst}$ is a frame for it,
every element in $f \in \Hsp$ has an expansion $f = \sum_k \ip{f}{f_k} g_k$,
where $\sett{g_k}_{k \in \Zdst}$ is the so-called canonical dual frame.
$\mathcal{F}$ is said to be \emph{localized} if $\abs{\ip{f_k}{f_j}} \leq
a_{k-j}$, for some sequence $a \in l^1_w(\Zdst)$ and a weight like the one
considered in Section \ref{sec_more_general}. Frame multipliers are defined
by applying a mask to the frame expansion:
\[
M_m(f) :=  \sum_k m_k \ip{f}{f_k} g_k,
\]
where $m \in l^\infty(\Zdst)$. Coorbit spaces $H^p_v(\mathcal{F})$
are defined by imposing $\ell^p_v$ summability conditions to
the coefficients $\ip{f}{f_k}$ (see \cite{fogr05} for the details). Theorem
\ref{th_coverings_do_cover_2} can be applied using $\Gc=\Lambda=\Zdst$
and yields a characterization of the spaces $H^p_v(\mathcal{F})$
in terms of frame multipliers. Hence, for example, modulation spaces (cf.
Section \ref{sec_app_gab}) can also be characterized in terms of the
so-called Gabor multipliers \cite{feno03}.

\section{Acknowledgements}
The author thanks Monika D\"orfler, Hans Feichtinger, Karlheinz Gr\"ochenig,
Franz Luef and Ursula Molter for their insightful comments and suggestions.

\bibliographystyle{abbrv}

\end{document}